\newtheorem{definition}{Definition}[section] 
\newtheorem{remark}{Remark}[section]    
\newtheorem{theorem}{Theorem}[section]
\newtheorem{conjecture}{Conjecture}[section]        
\newtheorem{question}{Question}[section]        
\newtheorem{lemma}{Lemma}[section]	
\newtheorem{corollary}{Corollary}[section]   
\numberwithin{equation}{section}
\newcommand{\R}{\mathbb{R}}
\newcommand{\T}{\mathbb{T}}
\newcommand{\N}{\mathbb{N}}
\newcommand{\Z}{\mathbb{Z}}
\newcommand{\eps}{\varepsilon}
\newcommand{\fhi}{\varphi}
\newcommand{\tr}{\mathrm{tr}}
\def\calO{\mathcal{O}}
\def\calS{\mathcal{S}}
\newcommand{\RNum}[1]{\uppercase\expandafter{\romannumeral #1\relax}}
\begin{document}

\title[Characterizations of diffusion matrices in homogenization]{Characterizations of diffusion matrices in homogenization of elliptic equations in nondivergence-form}

\author[X. Guo]{Xiaoqin Guo}
\address[Xiaoqin Guo]{
Department of Mathematical Sciences,
4314 French Hall,
University of Cincinnati,
Cincinnati, OH 45221.}
\email{guoxq@ucmail.uc.edu}

\author[T. Sprekeler]{Timo Sprekeler}
\address[Timo Sprekeler]{
Department of Mathematics,
National University of Singapore, 
10 Lower Kent Ridge Road, 
Singapore 119076.}
\email{timo.sprekeler@nus.edu.sg}

\author[H. V. Tran]{Hung V. Tran}
\address[Hung V. Tran]{
Department of Mathematics, 
University of Wisconsin Madison, 
Van Vleck Hall, 480 Lincoln Drive, 
Madison, Wisconsin 53706, USA.}
\email{hung@math.wisc.edu}

\subjclass[2010]{35B27, 35B40, 35J25}
\keywords{Homogenization, nondivergence-form elliptic PDE, optimal convergence rates}
\date{\today}

\begin{abstract}
We characterize diffusion matrices that yield a $L^{\infty}$ convergence rate of $\calO(\eps^2)$ in the theory of periodic homogenization of linear elliptic equations in nondivergence-form. Such type-$\eps^2$ diffusion matrices are of particular interest as the optimal rate of convergence in the generic case is only $\calO(\eps)$. First, we provide a new class of type-$\eps^2$ diffusion matrices, confirming a conjecture posed in \cite{GT20}.
Then, we give a complete characterization of diagonal diffusion matrices in two dimensions and a systematic study in higher dimensions.
\end{abstract}

\maketitle

\section{Introduction}

In this paper, we are concerned with the prototypical linear elliptic nondivergence-form problem 
\begin{align}\label{ueps problem}
\left\{\begin{aligned}-A\left(\frac{\cdot}{\eps}\right):D^2 u^{\eps} &= f& &\text{in }\Omega,\\
u^{\eps} &= g&  &\text{on }\partial\Omega,\end{aligned}\right.
\end{align}
posed on a bounded smooth domain $\Omega\subset \R^n$, where $\eps > 0$ is a parameter (considered to be small), $f\in C(\Omega)$ and $g\in C(\bar{\Omega})$ are continuous scalar functions of regularity
\begin{align}\label{f,g ass}
f\in W^{3,q}(\Omega),\quad g\in W^{5,q}(\Omega)\quad\text{for some }q>n,
\end{align}
and $A:\R^n\rightarrow \R^{n\times n}_{\mathrm{sym}}$ is a symmetric matrix-valued map which is assumed to be H\"{o}lder continuous, $\Z^n$-periodic and uniformly elliptic, i.e.,
\begin{align*}
A\in C^{0,\alpha}(\T^n;\calS^n_+)
\end{align*}   
for some $\alpha\in (0,1)$. Throughout this work, $\T^n:=\R^n / \Z^n$ is the flat $n$-dimensional torus and $\calS^n_+ \subset \R^{n\times n}_{\mathrm{sym}}$ the set of symmetric positive definite $n\times n$ matrices. 

\smallskip

We prove that if $A$ is of the form $A(y)=C+a(y)M$ for $y\in \R^n$, where $C, M\in \R^{n\times n}_{\mathrm{sym}}$ are constant symmetric matrices and $a\in C^{0,\alpha}(\T^n)$, then $(u^{\eps})_{\eps>0}$ converges to the homogenized solution in the $L^{\infty}(\Omega)$-norm with convergence rate $\calO(\eps^2)$ as $\eps\searrow 0$, which confirms a conjecture in \cite{GT20} (see Theorem \ref{Thm: Main}). 

\smallskip

Let $Y := [0,1]^n$ denote the unit cell in $\R^n$. 
Introducing the invariant measure $r\in C^{0,\alpha}(\T^n)$ (see e.g., \cite{AL89,ES08,BKR01,Sjo73}) as the unique solution to the periodic problem
\begin{align}\label{r problem}
-D^2:(rA) = 0\quad\text{in }Y,\qquad r \text{ is }Y\text{-periodic},\qquad r>0,\qquad \int_Y r = 1,
\end{align}
it is well known that the sequence of solutions $(u^{\eps})_{\eps>0}$ to \eqref{ueps problem} converges uniformly on $\bar{\Omega}$ to the solution $u$ of the homogenized/effective problem
\begin{align}\label{u problem}
\left\{\begin{aligned}-\bar{A}:D^2 u &= f& &\text{in }\Omega,\\
u &= g&  &\text{on }\partial\Omega,\end{aligned}\right.
\end{align}
where $\bar{A}\in \calS^n_+$ denotes the effective coefficient which is given by
\begin{align*}
\bar{A}:=\int_Y rA; 
\end{align*}
see e.g., \cite{BLP11,JKO94}. For $k,l\in \{1,\dots,n\}$, the (k,l)-th entry $\bar{a}_{kl}$ of $\bar{A}$ can be equivalently characterized as the unique value such that the (k,l)-th cell problem
\begin{align}\label{vij problem}
-A:D^2 v^{kl} = a_{kl}-\bar{a}_{kl}\quad\text{in }Y,\qquad v^{kl} \text{ is }Y\text{-periodic},\qquad \int_Y v^{kl} = 0
\end{align}
admits a unique solution $v^{kl}\in C^{2,\alpha}(\T^n)$, a so-called corrector function. 

The objective of this paper is to obtain new insights on the classes of type-$\eps^2$ and type-$\eps$ matrix-valued maps which we define as follows.
\begin{definition}[type-$\eps^2$ and type-$\eps$ diffusion matrices]\label{Def: cg cb}
Let $A\in C^{0,\alpha}(\mathbb{T}^n;\calS^n_+)$ for some $\alpha\in (0,1)$. For $j,k,l\in \{1,\dots,n\}$, we define the value $c_j^{kl}(A)\in \R$ by
\begin{align}\label{eq:def-cjkl}
c_j^{kl}(A):= \int_{Y} rAe_j\cdot \nabla v^{kl},
\end{align} 
where $r\in C^{0,\alpha}(\mathbb{T}^n)$ and $v^{kl}\in C^{2,\alpha}(\mathbb{T}^n)$ denote the solutions to \eqref{r problem} and \eqref{vij problem}, and $e_j\in \R^n$ denotes the vector with entries $(e_j)_i:=\delta _{ij}$ for $i\in \{1,\dots,n\}$, respectively. We call $A$ a type-$\eps^2$ diffusion matrix (or type-\RNum{2} diffusion matrix) if there holds
\begin{align*}
C_{jkl}(A):=c_j^{kl}(A)+c_k^{jl}(A)+c_l^{jk}(A) = 0\qquad\forall j,k,l\in \{1,\dots,n\}.
\end{align*}
Otherwise, we call $A$ a type-$\eps$ diffusion matrix (or type-\RNum{1} diffusion matrix).
\end{definition}
In the literature, $(c_j^{kl}(A))_{1\leq j,k,l\leq n}$ is commonly referred to as the third-order homogenized tensor; thanks to \cite{Arm22}. Up to a multiplicative constant, $(C_{jkl}(A))_{1\leq j,k,l\leq n}$ is the symmetric part of this third-order tensor.

Let us note that if $c_j^{kl}(A)=0$ for all $1\leq j,k,l\leq n$, then $A$ is type-$\eps^2$. Further, if $A$ is diagonal, then $v^{kl} \equiv 0$ for $k \neq l$, and thus, $A$ is type-$\eps^2$ if and only if $c_j^{kl}(A)=0$ for all $1\leq j,k,l\leq n$. We point out that there is a typo in the definition of ``c-good" (type-$\eps^2$) and ``c-bad" (type-$\eps$) diffusion matrices in \cite{GTY20,ST21}, which was intended to be as, and should be replaced by, Definition \ref{Def: cg cb}.

Introducing $z$ as the unique solution to the problem
\begin{align}\label{z problem}
\left\{\begin{aligned}-\bar{A}:D^2 z &= -\sum_{j,k,l=1}^n c_j^{kl}(A)\, \partial_{jkl}^3 u& &\text{in }\Omega,\\
z &= 0&  &\text{on }\partial\Omega,\end{aligned}\right.
\end{align}
it is well-known that 
\begin{align}\label{Linfty bound}
\left\|u^{\eps} - u + 2\eps z\right\|_{L^{\infty}(\Omega)} = \calO(\eps^2)\quad\text{as }\eps \searrow 0;
\end{align}
see e.g., \cite{GTY20,ST21} (note \cite{ST21} assumes $g\equiv 0$, but can be extended to $f,g$ satisfying \eqref{f,g ass}). Therefore, the classification of maps $A\in C^{0,\alpha}(\mathbb{T}^n;\calS^n_+)$ into type-$\eps^2$ and type-$\eps$ diffusion matrices is a classification by the corresponding optimal convergence rate for the convergence $u^{\eps}\rightarrow u$ in the $L^{\infty}(\Omega)$-norm as $\eps \searrow 0$.
\begin{remark}[Optimal rates of convergence]
Let $A\in C^{0,\alpha}(\mathbb{T}^n;\calS^n_+)$ for some $\alpha \in (0,1)$. By \eqref{Linfty bound} and \eqref{z problem}, we have the following assertions.
\begin{itemize}
\item[(i)] If $A$ is type-$\eps$, then we have that
\begin{align}\label{c-bad O(eps)}
\left\|u^{\eps} - u \right\|_{L^{\infty}(\Omega)} = \calO(\eps)\quad\text{as }\eps \searrow 0
\end{align}
for any choice of $f,g$, and the convergence rate $\calO(\eps)$ in \eqref{c-bad O(eps)} is optimal in general, i.e., there is a choice of $f,g$ for which $\left\|u^{\eps} - u \right\|_{L^{\infty}(\Omega)} = o(\eps)$ fails.
\item[(ii)] If $A$ is type-$\eps^2$, then the solution to \eqref{z problem} is $z\equiv 0$ and thus,
\begin{align}\label{c-bad O(eps^2)}
\left\|u^{\eps} - u \right\|_{L^{\infty}(\Omega)} = \calO(\eps^2)\quad\text{as }\eps \searrow 0
\end{align}
for any choice of $f,g$. Further, if $A$ is not constant, then the convergence rate $\calO(\eps^2)$ in \eqref{c-bad O(eps^2)} is optimal in general, i.e., there is a choice of $f,g$ for which $\|u^{\eps}-u\|_{L^{\infty}(\Omega)} = o(\eps^2)$ fails.
\end{itemize}
\end{remark}

Let us note that optimal convergence rates are not only important in analysis, but can also be used for deriving optimal error bounds in the numerical homogenization of elliptic equations in nondivergence-form. The development of numerical homogenization schemes for \eqref{ueps problem} and more generally, for fully-nonlinear equations of nondivergence structure is an active area of research; see e.g., \cite{CSS20,FO09,GSS21,KS21,CM09,FO18} and the references therein. For some results on convergence rates and error estimates in periodic homogenization of divergence-form equations; see \cite{Gri06,KLS12,KLS14,MV97,OV07,Sus13}.

It is known that the set $\{A\in C^{\infty}(\mathbb{T}^n;\calS^n_+): A\text{ is type-$\eps$}\}$ is open and dense in $C^{\infty}(\mathbb{T}^n;\calS^n_+)$ when $n\geq 2$; see the perturbation argument given in \cite{GTY20}. An explicit example of a type-$\eps$ diffusion matrix in dimension $n=2$ is the map $A\in C^{\infty}(\mathbb{T}^2;\calS^2_+)$ given by
\begin{align}\label{cbad from ST}
\begin{split}
A(y)&:=\frac{1}{r(y)}\, \mathrm{diag}\left(1-\frac{1}{2}\sin(2\pi y_1)\sin(2\pi y_2),1+\frac{1}{2}\sin(2\pi y_1)\sin(2\pi y_2)\right),\\
r(y)&:= 1+\frac{1}{4}(\cos(2\pi y_1)-2\sin(2\pi y_1))\sin(2\pi y_2)
\end{split}
\end{align}
for $y=(y_1,y_2)\in \R^2$; see \cite{ST21}. In fact, as evident in \eqref{Linfty bound} and \eqref{z problem}, the main source of the homogenization error comes from the averaged ``correlation"  $c_{j}^{kl}(A)$ between the gradients $\nabla v^{kl}$ of correctors and the field $A$ of the coefficients. It may seem natural to expect that high dimensionality offers faster decorrelation and hence yields at least the same rate for the homogenization, if not better.  Surprisingly, simulations show that in dimension $n=2$, all diagonal diffusion matrices with constant trace are type-$\eps^2$, and this is not true in dimensions $n\geq 3$. This has lead to the following conjecture in \cite{GT20}.

\begin{conjecture}[Conjecture 1 in \cite{GT20}]\label{Conj}
Assume that $A\in C^2(\T^2;\calS^2_+)$ is of the form
\begin{align*}
A(y) = \mathrm{diag}(a_1(y),a_2(y))\quad\text{for}\quad y\in \R^2,
\end{align*}
where $a_1,a_2\in C^2(\T^2;(0,1))$ and $a_1+a_2 \equiv 1$. Then, we conjecture that 
\begin{align*}
\left\|u^{\eps} - u \right\|_{L^{\infty}(\Omega)} = \calO(\eps^2)\quad\text{as }\eps \searrow 0
\end{align*}
for any choice of $f,g$.
\end{conjecture}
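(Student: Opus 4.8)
The plan is to recognize Conjecture~\ref{Conj} as a special case of Theorem~\ref{Thm: Main}. Setting $a(y) := a_1(y) - \tfrac12$ and using $a_1 + a_2 \equiv 1$, we may write
\[
A(y) = \diag(a_1(y),a_2(y)) = \diag\!\big(\tfrac12,\tfrac12\big) + a(y)\,\diag(1,-1),
\]
that is, $A = C + aM$ with $C = \diag(\tfrac12,\tfrac12)$ and $M = \diag(1,-1)$, where $a \in C^2(\T^2)$ with $\|a\|_{L^{\infty}} < \tfrac12$, so that $A \in C^2(\T^2;\calS^2_+)$ is uniformly elliptic. Theorem~\ref{Thm: Main} then applies and yields $\|u^{\eps} - u\|_{L^{\infty}(\Omega)} = \calO(\eps^2)$, which is exactly the assertion of Conjecture~\ref{Conj}. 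It remains to sketch the proof of Theorem~\ref{Thm: Main}.

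The key structural observation is that, when $A = C + aM$, all cell problems are driven by a single corrector. Indeed $a_{kl} - \bar{a}_{kl} = m_{kl}(a - \bar{a})$, where $\bar{a} := \int_Y ra$, so uniqueness in \eqref{vij problem} forces $v^{kl} = m_{kl}\,\phi$, where $\phi \in C^{2,\alpha}(\T^n)$ denotes the unique solution of $-A:D^2\phi = a - \bar{a}$ with $\int_Y \phi = 0$. Hence $c_j^{kl}(A) = m_{kl}\,d_j$ with $d := \int_Y rA\nabla\phi \in \R^n$, and the totally symmetric tensor $C_{jkl}(A) = m_{kl}d_j + m_{jl}d_k + m_{jk}d_l$ has associated cubic form
\[
\sum_{j,k,l=1}^{n} C_{jkl}(A)\,\xi_j\xi_k\xi_l = 3\,(d\cdot\xi)\,(M\xi\cdot\xi), \qquad \xi \in \R^n .
\]
Since a totally symmetric tensor vanishes iff its associated cubic form is identically zero, and $M\xi\cdot\xi \not\equiv 0$ whenever $M \neq 0$ (if $M = 0$ then $A$ is constant and trivially type-$\eps^2$), we conclude that $A$ is type-$\eps^2$ --- equivalently, by \eqref{Linfty bound} and \eqref{z problem}, $\|u^{\eps} - u\|_{L^{\infty}(\Omega)} = \calO(\eps^2)$ for every $f$ and $g$ --- if and only if $d = \int_Y rA\nabla\phi = 0$. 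Everything thus reduces to proving $d = 0$.

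This last step is where I expect the main difficulty. First I would integrate by parts to write $d_j = -\int_Y \phi\,\mathbf{G}_j$, where $\mathbf{G} := \mathrm{div}(rA)$ (with $\mathbf{G}_j = \sum_i \partial_i(ra_{ij})$) has zero divergence and zero mean on $\T^n$ by \eqref{r problem}. In the two-dimensional diagonal situation of Conjecture~\ref{Conj} one has $\mathbf{G} = (\partial_1(ra_1),\,\partial_2(ra_2))$, hence $\mathbf{G} = (-\partial_2\psi,\,\partial_1\psi)$ for a periodic stream function $\psi$; solving $\Delta\psi = \partial_1\mathbf{G}_2 - \partial_2\mathbf{G}_1 = -2\,\partial^2_{12}(ra)$ gives $\psi = -2\,\partial^2_{12}\Delta^{-1}(ra)$, so that $d = \big(-\int_Y \psi\,\partial_2\phi,\ \int_Y \psi\,\partial_1\phi\big)$. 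The delicate point is to close the computation by feeding in the corrector equation $\tfrac12\Delta\phi + a(\partial^2_{11} - \partial^2_{22})\phi = \bar{a} - a$ in an essential way --- using only the invariant-measure relation leads straight back to the tautology $d = d$, so one is forced to exploit the equation for $\phi$, for instance by testing it against a suitable multiple of $\psi$ or of $\Delta^{-1}(ra)$. The mechanism behind $d = 0$ is ultimately a parity cancellation: writing $a = \eps\beta$, one finds $d = \calO(\eps^3)$ because the leading quadratic-in-$\beta$ contribution carries a Fourier multiplier odd under $k \mapsto -k$, which is annihilated against the even weight $|\hat{\beta}_k|^2$; arranging matters, perturbatively to all orders or by a direct argument, so that this antisymmetry persists is the technical heart of the proof.
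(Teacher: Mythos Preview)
Your reduction is exactly the paper's: write $A=C+aM$ with constant symmetric $C,M$ and appeal to Theorem~\ref{Thm: Main}. You also correctly identify that all correctors are scalar multiples of a single function $\phi$ (the paper's $w$), so that everything collapses to showing $d:=\int_Y rA\nabla\phi=0$. Up to here you match the paper.

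The gap is precisely where you say ``this last step is where I expect the main difficulty.'' Your stream-function and Fourier-parity sketches do not close: the perturbative observation that $d=\calO(\eps^3)$ in $a=\eps\beta$ says nothing about the full nonlinear problem, and ``arranging matters to all orders'' is not an argument. The paper's key idea, which you are missing, is an \emph{explicit formula for the invariant measure in terms of the corrector}: one checks directly that $r=1+M{:}D^2\phi$ solves \eqref{r problem}, the only nontrivial point being the symmetry $\int_Y(M{:}D^2\phi)(C{:}D^2\psi)=\int_Y(C{:}D^2\phi)(M{:}D^2\psi)$. This formula, combined with the corrector equation, also gives $ra-\bar a=-C{:}D^2\phi$. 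Substituting both into
\[
d_j=\int_Y(r-1)\,Ce_j\cdot\nabla\phi+\int_Y(ra-\bar a)\,Me_j\cdot\nabla\phi
\]
yields a difference of two sums of terms of the form $\int_Y(\partial^2_{st}\phi)(\partial_i\phi)$, each of which vanishes by integration by parts. So $d=0$ drops out in one line once you have $r=1+M{:}D^2\phi$; no Fourier analysis or perturbation is needed. The insight you should retain is that for coefficients of the form $C+aM$, the invariant measure is not merely constrained by $\phi$ but is \emph{determined} by it through a simple algebraic expression.
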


A major contribution of this work is to provide a proof of this conjecture as well as a generalization to more general types of diffusion matrices in any dimension. 
We then give a complete characterization of type-$\eps^2$ diagonal diffusion matrices in two dimensions and a systematic study in higher dimensions.

The main challenge in characterizing type-$\eps^2$ diffusion matrices lies in the fact that, in general, the invariant measure $r$ and the correctors $v^{kl}$ depend on $A$ in a highly nontrivial way, and that the evaluation of $c^{kl}_j(A)$ involves understanding the complicated relation between the invariant measure and the gradient of the corrector.

To the best of our knowledge,  prior to this work, almost all known examples of type-$\eps^2$ diffusion matrices fall into the {\it reversible} category where $\mathrm{div}(rA) = 0$ (weakly). The term ``reversible" stems from the fact that, in this case, the operator $u\mapsto -A:D^2u$ is self-adjoint in $L^2(r)$ and describes a diffusion which is time-reversible; see e.g., \cite{DSF88}. Note that it is immediately clear from the definition \eqref{eq:def-cjkl} of $c_j^{kl}(A)$ that $\mathrm{div}(rA) = 0$ implies that $A$ is type-$\eps^2$, and no information on the corrector is needed in this case.  However, in the non-reversible regime,  the complicated relation between $r, A$ and $\nabla v^{kl}$ renders the equalities $C_{jkl}(A)=0$ almost impossible to verify.  Moreover, the definitions of $c_j^{kl}(A)$ and $C_{jkl}(A)$ tell us only very little about the structure of type-$\eps^2$ diffusion matrices. For example, it is not clear from \eqref{eq:def-cjkl} how rare type-$\eps^2$ maps are, or how to construct a type-$\eps$ map.  In view of the above discussion, it is important to find further characterizations to achieve better verifiability and obtain a clearer understanding of the structure of type-$\eps^2$ diffusion matrices.

\subsection{Main results}

As our first main result, we confirm Conjecture~\ref{Conj} by obtaining a new class of type-$\eps^2$ diffusion matrices. This shows that type-$\eps^2$ diffusion matrices, although generically very rare, are far more abundant than the reversible case.

\begin{theorem}[A new class of type-$\eps^2$ maps]\label{Thm: Main}
Let $C,M\in \R^{n\times n}_{\mathrm{sym}}$, let $a\in C^{0,\alpha}(\T^n)$ for some $\alpha \in (0,1)$, and suppose that the map $A:\R^n \rightarrow \R^{n\times n}_{\mathrm{sym}}$ given by
\begin{align*}
A(y):= C + a(y) M\quad\text{for}\quad y\in \R^n
\end{align*}
satisfies $A\in C^{0,\alpha}(\T^n;\calS^n_+)$. Then, there holds $c_j^{kl}(A)=0$ for all $j,k,l\in \{1,\dots,n\}$.
In particular, $A$ is type-$\eps^2$.
\end{theorem}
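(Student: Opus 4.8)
The plan exploits the feature that for $A=C+aM$ the oscillation $A-\bar A$ is, pointwise, a scalar multiple of the fixed matrix $M$, which collapses all correctors to multiples of one scalar corrector. Concretely, put $\bar a:=\int_Y ra$. Since $a_{kl}=c_{kl}+a\,m_{kl}$ and $\bar a_{kl}=\int_Y ra_{kl}=c_{kl}+m_{kl}\bar a$, we have $a_{kl}-\bar a_{kl}=m_{kl}(a-\bar a)$ for all $k,l$ (so that $\bar A=C+\bar a M$ lies on the same affine line $\{C+tM:t\in\R\}$ as $A$). Let $w\in C^{2,\alpha}(\T^n)$ be the unique $Y$-periodic, mean-zero solution of $-A:D^2 w=a-\bar a$ in $Y$, which exists because $\int_Y r(a-\bar a)=0$. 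By linearity and the uniqueness of the cell problem \eqref{vij problem}, $v^{kl}=m_{kl}w$ for every $k,l$. Hence $c_j^{kl}(A)=m_{kl}\int_Y rAe_j\cdot\nabla w$, and it suffices to prove $\int_Y rAe_j\cdot\nabla w=0$ for every $j\in\{1,\dots,n\}$.

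\emph{Reformulation.} Set $s:=ra$, so $rA=rC+sM$. The invariant-measure equation \eqref{r problem} becomes $C:D^2r+M:D^2s=0$; equivalently, the vector field $b:=\operatorname{div}(rA)=C\nabla r+M\nabla s$ is (weakly) divergence-free. Multiplying $-A:D^2 w=a-\bar a$ by $r$ and using the pointwise identity $rA:D^2 w=\operatorname{div}(rA\nabla w)-b\cdot\nabla w$, one sees that $w$ solves the divergence-form equation $-\operatorname{div}(rA\nabla w)+b\cdot\nabla w=r(a-\bar a)$ with divergence-free drift $b$. Moreover, integrating by parts, $\int_Y rAe_j\cdot\nabla w=-\int_Y b_j\,w$, so the goal is the vector identity $\int_Y w\,b=0$.

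\emph{The cancellation (the main obstacle).} It remains to show $\int_Y w\,b=0$. This is the heart of the matter: since $b$ is merely divergence-free, the identity does \emph{not} follow from the previous steps alone, and one must couple the corrector equation with the invariant-measure equation through the one-parameter structure $A=C+aM$. I would proceed via one (or both) of the following devices. (a) The adjoint corrector $\rho_j$, i.e. the $Y$-periodic mean-zero solution of $-D^2:(A\rho_j)=b_j$ (which exists since $\int_Y b_j=0$); the duality $\int_Y\rho_j\,(-A:D^2 w)=\int_Y w\,(-D^2:(A\rho_j))$ then gives $\int_Y w\,b_j=\int_Y\rho_j\,(a-\bar a)$, reducing the claim to $\int_Y a\,\rho_j=0$. (b) The affine corrector $W:=w+\tfrac12\,Ny\cdot y$, with $N\in\R^{n\times n}_{\mathrm{sym}}$ chosen so that $M:N=1$ and $C:N=-\bar a$ (possible whenever $C,M$ are linearly independent), for which $A:D^2W=0$ while $D^2W$ remains $Y$-periodic. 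To extract the cancellation I would combine these with the differential relations forced by perturbing $a$; in particular, translating $a\mapsto a(\cdot+t\xi)$ leaves $\bar A$ invariant and, via the first-variation formula for the effective coefficient, yields the identity $\int_Y r\,(1+M:D^2 w)\,\nabla a=0$, which together with the corrector and invariant-measure equations should close the argument. Making these pieces fit together to force $\int_Y w\,b=0$ is the delicate step. The remaining items — existence and regularity of $r$, $w$, $\rho_j$, and the degenerate subcases $M=0$ (then $A$ is constant) and $C,M$ parallel (then $A=gM$ for a scalar $g$, whence $r=c_0/g$, $\operatorname{div}(rA)=0$, and $c_j^{kl}(A)=0$ follows by integrating by parts in \eqref{eq:def-cjkl}) — are routine.
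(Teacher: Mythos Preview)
Your setup is correct and matches the paper: introducing $w$ with $-A:D^2w=a-\bar a$ and observing $v^{kl}=m_{kl}w$ is exactly the paper's Step~2, and your reduction $c_j^{kl}(A)=m_{kl}\int_Y rAe_j\cdot\nabla w=-m_{kl}\int_Y b_j w$ is fine. But your ``cancellation'' paragraph is a genuine gap: you yourself flag that $\int_Y w\,b=0$ ``is the delicate step,'' and neither the adjoint corrector $\rho_j$, nor the affine corrector $W$, nor the translation identity is carried through. In particular, the reduction to $\int_Y a\,\rho_j=0$ is no easier than the original claim, and the translation identity you write down does not by itself determine the vector $\int_Y w\,b$.

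The missing idea is an \emph{explicit formula for the invariant measure}: one checks directly that $\xi:=1+M:D^2w$ solves the adjoint equation $\int_Y\xi(-A:D^2\varphi)=0$ (using the symmetry $\int_Y(M:D^2w)(C:D^2\varphi)=\int_Y(C:D^2w)(M:D^2\varphi)$ and the corrector equation for $w$), hence $r=1+M:D^2w$. This immediately gives $ra-\bar a=-C:D^2w$, and then
\[
\int_Y rAe_j\cdot\nabla w=\int_Y(r-1)\,Ce_j\cdot\nabla w+\int_Y(ra-\bar a)\,Me_j\cdot\nabla w
=\int_Y(M:D^2w)(Ce_j\cdot\nabla w)-\int_Y(C:D^2w)(Me_j\cdot\nabla w)=0,
\]
since every term is a linear combination of integrals $\int_Y(\partial^2_{st}w)(\partial_i w)$, which vanish by one integration by parts. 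This is the paper's Step~1 and Step~3; with it the proof is two lines, and none of your devices (a), (b), or the first-variation argument is needed. (Incidentally, your translation identity $\int_Y r(1+M:D^2w)\nabla a=0$ would be an immediate consequence of $r=1+M:D^2w$ together with $\int_Y r^2\nabla a=-\int_Y a\nabla(r^2)$ only if $a$ were arbitrary; as stated it is a single vector equation and cannot force the conclusion.)
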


As an immediate consequence, we find that any $A\in C^{0,\alpha}(\T;(0,\infty))$ is type-$\eps^2$, i.e., any diffusion matrix in dimension $n=1$ is type-$\eps^2$. More interestingly, we can deduce from Theorem \ref{Thm: Main} that any diagonal diffusion matrix with constant trace is type-$\eps^2$ in dimension $n=2$.

\begin{corollary}[Diagonal constant-trace maps are type-$\eps^2$ in $n=2$]\label{Cor: Cor1}
Let $c>0$ and $\alpha \in (0,1)$. Then, every diagonal map $A\in C^{0,\alpha}(\T^2;\calS^2_+)$ with $\tr(A)\equiv c$ is type-$\eps^2$. In particular, Conjecture \ref{Conj} is true.
\end{corollary}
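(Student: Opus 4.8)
The plan is to derive Corollary~\ref{Cor: Cor1} directly from Theorem~\ref{Thm: Main} by writing a diagonal constant-trace matrix in the affine form $A(y)=C+a(y)M$. First I would set $n=2$ and write $A(y)=\diag(a_1(y),a_2(y))$ with $a_1,a_2\in C^{0,\alpha}(\T^2)$ strictly positive and $a_1(y)+a_2(y)=c$ for all $y$. The key observation is that such an $A$ is automatically of the affine form required by Theorem~\ref{Thm: Main}: take $a(y):=a_1(y)$, $C:=\diag(0,c)=c\,e_2\otimes e_2$, and $M:=\diag(1,-1)$. Then $C+a(y)M=\diag(a_1(y),c-a_1(y))=\diag(a_1(y),a_2(y))=A(y)$, all three matrices $C$, $M$ are constant symmetric, $a\in C^{0,\alpha}(\T^2)$, and $A\in C^{0,\alpha}(\T^2;\calS^2_+)$ by hypothesis.

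Having written $A$ in this form, Theorem~\ref{Thm: Main} immediately gives $c_j^{kl}(A)=0$ for all $j,k,l\in\{1,2\}$, hence $C_{jkl}(A)=0$ for all $j,k,l$, so $A$ is type-$\eps^2$ by Definition~\ref{Def: cg cb}. (Alternatively, one can invoke the remark after Definition~\ref{Def: cg cb}: since $A$ is diagonal, $v^{kl}\equiv 0$ for $k\neq l$, so type-$\eps^2$ is equivalent to $c_j^{kl}(A)=0$ for all indices, which is exactly what Theorem~\ref{Thm: Main} delivers.) For the second assertion, I would then recall the setup of Conjecture~\ref{Conj}: there $A(y)=\diag(a_1(y),a_2(y))$ with $a_1,a_2\in C^2(\T^2;(0,1))$ and $a_1+a_2\equiv 1$; this is precisely the case $c=1$ with the extra $C^2$ regularity, so $A$ is type-$\eps^2$ by what we have just shown, and then part~(ii) of the Remark on optimal rates of convergence yields $\|u^\eps-u\|_{L^\infty(\Omega)}=\calO(\eps^2)$ for any choice of $f,g$, which is the statement of Conjecture~\ref{Conj}.

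There is essentially no obstacle here once Theorem~\ref{Thm: Main} is in hand: the only thing to check is that the decomposition $A=C+aM$ is legitimate, i.e., that one really can absorb a diagonal constant-trace matrix into a single scalar times a fixed traceless diagonal matrix plus a constant. The mild subtlety worth a sentence is that the corollary is stated for $C^{0,\alpha}$ maps while the conjecture is phrased for $C^2$ maps with range in $(0,1)$; since $C^2\subset C^{0,\alpha}$ and the constraint $a_i\in(0,1)$ with $a_1+a_2\equiv 1$ is the special case $c=1$ of $\tr(A)\equiv c$, the corollary is strictly more general and the conjecture follows as the stated special case. No new computation involving $r$ or the correctors is needed — that work is entirely contained in the proof of Theorem~\ref{Thm: Main}.
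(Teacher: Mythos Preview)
Your proposal is correct and matches the paper's own proof essentially verbatim: the paper also sets $C:=\mathrm{diag}(0,c)$, $M:=\mathrm{diag}(1,-1)$, and $a:=a_1$, then invokes Theorem~\ref{Thm: Main}. Your additional remarks on why Conjecture~\ref{Conj} follows as the $c=1$, $C^2\subset C^{0,\alpha}$ special case are accurate and slightly more explicit than the paper, which leaves that deduction implicit.
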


An important special case of solutions to the Dirichlet problem are the harmonic functions.
As another interesting consequence of Theorem \ref{Thm: Main}, we show that for any diagonal $A\in C^{0,\alpha}(\T^2;\calS^2_+)$,  the ``$A(\tfrac\cdot\eps)$-harmonic" functions, i.e.,  solutions to \eqref{ueps problem} with $f\equiv 0$, homogenize at rate $\calO(\eps^2)$.

For the harmonic functions, the crucial observation is that multiplication of $A\in C^{0,\alpha}(\T^n;\calS^n_+)$ by any function $\gamma\in C^{0,\alpha}(\T^n;(0,\infty))$ does not change the solution of \eqref{ueps problem} when $f\equiv 0$. Hence, harmonic functions homogenize at rate $\calO(\eps^2)$ as long as the ``orbit" $\Gamma_A:=\{\gamma A: \gamma\in C^{0,\alpha}(\T^n;(0,\infty))\}$ contains at least one type-$\eps^2$ map. By Corollary~\ref{Cor: Cor1}, this is certainly true when $n=2$, in which case $\tfrac 1{\tr(A)}A$ is type-$\eps^2$.

\begin{corollary}[Convergence rate $\calO(\eps^2)$ for diagonal maps in $n=2$ when $f\equiv 0$]\label{Cor: Cor2}
Let $a_1,a_2\in C^{0,\alpha}(\T^2;(0,\infty))$ for some $\alpha\in (0,1)$ and define $A\in C^{0,\alpha}(\T^2;\calS^2_+)$ by 
\begin{align*}
A(y) := \mathrm{diag}(a_1(y),a_2(y))\quad\text{for}\quad y\in \R^2.
\end{align*}
Set $f\equiv 0$. Then, for any choice of $g$ satisfying \eqref{f,g ass}, there holds
\begin{align*}
\left\|u^{\eps} - u \right\|_{L^{\infty}(\Omega)} = \calO(\eps^2)\quad\text{as }\eps \searrow 0,
\end{align*}
where $u^{\eps}$ denotes the solution to \eqref{ueps problem} and $u$ the solution to \eqref{u problem}.
\end{corollary}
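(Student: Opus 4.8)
\textit{Proof proposal.} The plan is to deduce the claim from Corollary~\ref{Cor: Cor1} by exploiting the fact that, when $f\equiv 0$, neither the solution $u^{\eps}$ of \eqref{ueps problem} nor the homogenized solution $u$ of \eqref{u problem} is affected by multiplying the diffusion matrix by a positive periodic scalar factor. First, I would set $\gamma:=1/\tr(A)=1/(a_1+a_2)\in C^{0,\alpha}(\T^2;(0,\infty))$ and introduce the diagonal map $\tilde A:=\gamma A=\diag\!\big(\tfrac{a_1}{a_1+a_2},\,\tfrac{a_2}{a_1+a_2}\big)\in C^{0,\alpha}(\T^2;\calS^2_+)$, which has $\tr(\tilde A)\equiv 1$. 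By Corollary~\ref{Cor: Cor1}, $\tilde A$ is type-$\eps^2$, and since $\tilde A$ is diagonal this is equivalent (as observed after Definition~\ref{Def: cg cb}) to $c_j^{kl}(\tilde A)=0$ for all $j,k,l\in\{1,2\}$.

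Second, I would verify that, because $f\equiv 0$, replacing $A$ by $\tilde A$ leaves both $u^{\eps}$ and $u$ unchanged. For $u^{\eps}$: the pointwise identities $-A(\tfrac{\cdot}{\eps}):D^2v=0$ and $-\tilde A(\tfrac{\cdot}{\eps}):D^2v=0$ are equivalent since $\gamma(\tfrac{\cdot}{\eps})>0$; together with the boundary condition $v=g$ on $\partial\Omega$ and uniqueness for linear nondivergence-form elliptic problems with no zeroth-order term (for instance via the Alexandrov--Bakelman--Pucci maximum principle), this forces $u^{\eps}$ to coincide with the solution $\tilde u^{\eps}$ of \eqref{ueps problem} with $A$ replaced by $\tilde A$. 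For $u$: writing $\tilde r$ for the invariant measure of $\tilde A$ defined by \eqref{r problem}, the relation $\tilde r\,\tilde A=(\tilde r\gamma)A$ together with the fact that the periodic equation $-D^2:(wA)=0$ has a one-dimensional solution space spanned by $r$ yields $\tilde r\gamma=c\,r$ for the positive constant $c:=\big(\int_Y r/\gamma\big)^{-1}$; hence the effective matrix is $\bar{\tilde A}=\int_Y\tilde r\,\tilde A=\int_Y(\tilde r\gamma)A=c\,\bar A$, and since $f\equiv 0$ the problems $-\bar A:D^2w=0$ and $-\bar{\tilde A}:D^2w=0$ with boundary data $g$ share the same unique solution. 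Thus $u$ equals the homogenized solution $\tilde u$ associated with $\tilde A$.

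Finally, I would apply the quantitative expansion \eqref{Linfty bound}--\eqref{z problem} to the coefficient $\tilde A$: since $c_j^{kl}(\tilde A)=0$ for all $j,k,l$, the right-hand side of the $z$-problem \eqref{z problem} for $\tilde A$ vanishes identically, so the associated $z$ is $\equiv 0$, and \eqref{Linfty bound} gives $\|\tilde u^{\eps}-\tilde u\|_{L^{\infty}(\Omega)}=\calO(\eps^2)$ as $\eps\searrow 0$. Combining this with $u^{\eps}=\tilde u^{\eps}$ and $u=\tilde u$ from the previous step yields $\|u^{\eps}-u\|_{L^{\infty}(\Omega)}=\calO(\eps^2)$, which is the assertion.

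I do not expect a genuine obstacle here, as all of the hard analysis is already contained in Theorem~\ref{Thm: Main}/Corollary~\ref{Cor: Cor1} and in the estimate \eqref{Linfty bound}. The only step requiring some care is the bookkeeping in the second paragraph: one must make precise the one-dimensionality of the kernel of $w\mapsto -D^2:(wA)$ on periodic functions (so as to identify $\tilde r\gamma$ with a scalar multiple of $r$) and track the constant $c$, in order to conclude that the \emph{homogenized} Dirichlet problem with $f\equiv 0$ is truly unchanged — not merely changed by an innocuous positive rescaling of $\bar A$ — and hence that $u=\tilde u$.
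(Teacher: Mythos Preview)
Your proposal is correct and follows essentially the same approach as the paper: normalize $A$ by its trace to obtain a diagonal constant-trace map $\tilde A$, observe that $u^{\eps}$ is unchanged since $f\equiv 0$, and invoke Corollary~\ref{Cor: Cor1} to conclude that $\tilde A$ is type-$\eps^2$. In fact, your write-up is more careful than the paper's on one point: the paper simply asserts that the conclusion follows once $\tilde A$ is type-$\eps^2$, whereas you explicitly verify (via $\tilde r\gamma = c r$ and $\bar{\tilde A}=c\bar A$) that the homogenized solution $\tilde u$ for $\tilde A$ coincides with the homogenized solution $u$ for $A$, which is needed to identify the limit correctly.
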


\begin{remark}\label{Rk: Cor of Thm 1.1 not true in n>2}
Corollaries \ref{Cor: Cor1} and \ref{Cor: Cor2} are not true in dimensions $n\geq 3$:
\begin{itemize}
\item[(i)] There exists a diagonal map $A\in C^{\infty}(\T^3;\calS^3_+)$ with constant trace which is type-$\eps$; see Section \ref{Subsec: c-bad diagonal map with constant trace in $n=3$}.
\item[(ii)] 
There exists a diagonal map $A\in C^{\infty}(\T^3;\calS^3_+)$ and a function $g\in C^{\infty}(\bar{\Omega})$ such that the optimal rate of convergence of the solutions $u^{\eps}$ of \eqref{ueps problem} with $f\equiv 0$ in the $L^{\infty}(\Omega)$-norm is $\calO(\eps)$; see Section \ref{Subsec: Example with optimal rate Oeps}.
\end{itemize}
\end{remark}

The second major result of this work 
is the complete characterization of type-$\eps^2$ diagonal maps in dimension $n=2$.

\begin{theorem}[type-$\eps^2$ diagonal maps in $n=2$]\label{Thm: Complete char}
Let $a\in C^{0,\alpha}(\T^2;(0,\infty))$ and $b\in C^{0,\alpha}(\T^2;(-1,1))$ for some $\alpha \in (0,1)$. Further, let $A,B\in C^{0,\alpha}(\T^2;\calS^2_+)$ be the maps defined by
\begin{align}\label{form for char}
A(y):=a(y)B(y),\qquad   B(y) := \mathrm{diag}(1+b(y),1-b(y))
\end{align}
for $y\in \R^2$. We denote the invariant measure of $A$ by $r\in C^{0,\alpha}(\T^2;(0,\infty))$ and the invariant measure of $B$ by $r_B\in C^{0,\alpha}(\T^2;(0,\infty))$. We introduce the functions $w_A,w_B \in C^{2,\alpha}(\T^n)$ as the unique solutions to
\begin{align*}
-A:D^2 w_A = a-\int_Y ra&\quad\text{in }Y,\qquad w_A \text{ is }Y\text{-periodic},\qquad \int_Y w_A = 0,\\
-\Delta w_B = r_B\, b-\int_Y r_B\, b&\quad\text{in }Y,\qquad w_B \text{ is }Y\text{-periodic},\qquad \int_Y w_B = 0.
\end{align*}
Then, $A$ is type-$\eps^2$ if and only if
\begin{align}\label{eq:wawb}
\int_Y (\partial_1 w_A)(\partial_{22}^2 w_B) = \int_Y (\partial_2 w_A)(\partial_{11}^2 w_B) = 0.
\end{align}
\end{theorem}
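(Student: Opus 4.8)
The plan is to transfer every object attached to $A=aB$ to the corresponding object for $B$, exploit that $B$ is of the admissible form in Theorem~\ref{Thm: Main}, and then convert the resulting integral identities into pairings of $w_A$ with $w_B$. First I would reduce the problem: since $A$ is diagonal, $v^{kl}\equiv 0$ for $k\neq l$, so by the remark after Definition~\ref{Def: cg cb}, $A$ is type-$\eps^2$ if and only if $c_j^{kk}(A)=0$ for all $j,k\in\{1,2\}$ -- only four numbers to control. Writing $b_{kl}$ for the entries of $B$, $\bar b_{kl}:=\int_Y r_B b_{kl}$, and noting that \eqref{r problem} for $A=aB$ reads $-D^2:((ra)B)=0$, uniqueness of the invariant measure gives $ra=c\,r_B$ with $c:=(\int_Y r_B/a)^{-1}>0$. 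Dividing the cell problem \eqref{vij problem} for $A$ by $a$ (and using $\bar a_{kl}=c\,\bar b_{kl}$) gives $-B:D^2 v_A^{kl}=b_{kl}-\tfrac{c}{a}\bar b_{kl}$, while dividing the defining equation for $w_A$ by $a$ gives $-B:D^2 w_A=1-\tfrac{c}{a}$. Comparing these with $-B:D^2 v_B^{kl}=b_{kl}-\bar b_{kl}$ and invoking uniqueness of zero-mean periodic solutions of $-B:D^2(\cdot)=(\cdot)$ yields the key identity
\begin{align*}
v_A^{kl}=v_B^{kl}+\bar b_{kl}\,w_A .
\end{align*}

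With this in hand, using $ra=c\,r_B$, the diagonality of $B$, and $\int_Y r_B b_{jj}\,\partial_j v_B^{kk}=c_j^{kk}(B)$, I would compute
\begin{align*}
c_j^{kk}(A)=c\int_Y r_B\,b_{jj}\,\partial_j v_A^{kk}=c\,c_j^{kk}(B)+c\,\bar b_{kk}\int_Y r_B Be_j\cdot\nabla w_A .
\end{align*}
Since $B=\mathrm{diag}(1+b,1-b)=I+b\,\mathrm{diag}(1,-1)$ is exactly of the form $C+aM$ in Theorem~\ref{Thm: Main}, we have $c_j^{kk}(B)=0$. As $c>0$ and $\bar b_{11},\bar b_{22}>0$ (they are the diagonal entries of $\bar B\in\calS^2_+$, equivalently $1\pm\int_Y r_B b$ with $|b|<1$), the four conditions $c_j^{kk}(A)=0$ collapse to the two conditions $\int_Y r_B(1+b)\,\partial_1 w_A=0$ and $\int_Y r_B(1-b)\,\partial_2 w_A=0$.

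It then remains to rewrite these through $w_B$. For diagonal $B$, equation \eqref{r problem} reads $\Delta r_B+(\partial_{11}^2-\partial_{22}^2)(r_B b)=0$; substituting $r_B b=-\Delta w_B+\int_Y r_B b$ gives $\Delta\bigl(r_B-\partial_{11}^2 w_B+\partial_{22}^2 w_B\bigr)=0$, hence by periodicity and comparison of averages $r_B=1+\partial_{11}^2 w_B-\partial_{22}^2 w_B$. Combining this with $r_B b=-\Delta w_B+\int_Y r_B b$ gives $r_B(1+b)=\bar b_{11}-2\partial_{22}^2 w_B$ and $r_B(1-b)=\bar b_{22}+2\partial_{11}^2 w_B$. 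Since $\int_Y\partial_1 w_A=\int_Y\partial_2 w_A=0$, the two conditions above become $\int_Y(\partial_1 w_A)(\partial_{22}^2 w_B)=0$ and $\int_Y(\partial_2 w_A)(\partial_{11}^2 w_B)=0$, which is precisely \eqref{eq:wawb}.

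The routine parts are the integrations by parts and the solvability/uniqueness bookkeeping (all correctors and $w_A,w_B$ are zero-mean $C^{2,\alpha}(\T^2)$ functions, $r$ and $r_B$ are positive with unit integral, and $-B:D^2(\cdot)=(\cdot)$ is uniquely solvable among zero-mean periodic functions). The step I expect to be the main obstacle is organizing the algebra so that the corrector identity $v_A^{kl}=v_B^{kl}+\bar b_{kl}w_A$ and the invariant-measure identity $r_B=1+\partial_{11}^2 w_B-\partial_{22}^2 w_B$ emerge cleanly; once these are in place, together with $c_j^{kk}(B)=0$ from Theorem~\ref{Thm: Main}, the claimed equivalence follows by direct computation.
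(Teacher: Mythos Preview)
Your proposal is correct and follows essentially the same route as the paper: you reprove Lemma~\ref{Lmm: aB} inline to obtain $ra=\bar a\,r_B$ and $v_A^{kl}=v_B^{kl}+\bar b_{kl}w_A$, invoke Theorem~\ref{Thm: Main} to kill $c_j^{kk}(B)$, and then derive $r_B=1+\partial_{11}^2 w_B-\partial_{22}^2 w_B$ (the paper imports this from the proof of Theorem~\ref{Thm: Main}, you obtain it directly from $\Delta r_B+(\partial_{11}^2-\partial_{22}^2)(r_Bb)=0$) to rewrite $r_B(1\pm b)$ in terms of $\partial_{11}^2 w_B,\partial_{22}^2 w_B$. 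The remaining computations and the final reduction to \eqref{eq:wawb} match the paper exactly.
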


Theorem \ref{Thm: Complete char} is a characterization of all type-$\eps^2$ diagonal diffusion matrices in dimension $n=2$ since any diagonal $A\in C^{0,\alpha}(\T^2;\calS^2_+)$ can be written as \eqref{form for char} with $a:= \frac{1}{2}\tr (A)$. In particular, when $A$ has constant trace we have $w_A\equiv 0$ and thus, $A$ is type-$\eps^2$, which is consistent with the result of Corollary \ref{Cor: Cor1}. 
On the other hand, to construct a type-$\eps$ map, one may choose $a$ such that \eqref{eq:wawb} fails.  

Another feature of this characterization is that, for fixed $B$, one can design $a$ explicitly such that,  up to a multiplicative constant, $w_A$ can be any $Y$-periodic function with mean zero. Indeed, given a function $\fhi \in C^{2,\alpha}(\T^2)$ with $\int_Y \fhi = 0$, if we set $a:=\frac{1}{1+sB:D^2\fhi}$ with $s>0$ such that $a\in C^{0,\alpha}(\T^2;(0,\infty))$, then $w_A = s\fhi$.
\medskip

As mentioned above,  the existence of a type-$\eps^2$ ``representative" within the orbit $\Gamma_A$ of $A\in C^{0,\alpha}(\T^n;\calS^n_+)$ is crucial for the quantification of the homogenization of harmonic functions. Our third main result reveals the representatives of the orbit of any diagonal diffusion matrix. To be specific, it states that if the orbit of a diagonal diffusion matrix $A$ contains a type-$\eps^2$ diffusion matrix $\tilde{A}\in C^{0,\alpha}(\T^n;\calS^n_+)$, then $\frac{1}{\gamma}\tilde{A}$ is type-$\eps^2$, where $\gamma\in C^{0,\alpha}(\T^n;(0,\infty))$ is any positive linear combination of entries of $\tilde{A}$.  In particular,  diagonal coefficient matrices with constant trace can serve as such representatives.

\begin{theorem}[Classification of $\frac{1}{C:A}A$]\label{Thm: 1/C:A A}
Let $A\in C^{0,\alpha}(\T^n;\calS^n_+)$ for some $\alpha \in (0,1)$. Let $C\in \R^{n\times n}$ be a constant matrix such that the function $\gamma:\R^n\rightarrow \R$ given by
\begin{align*}
\gamma(y):=\frac{1}{C:A(y)}\quad\text{for}\quad y\in \R^n
\end{align*}
satisfies $\gamma \in C^{0,\alpha}(\T^n;(0,\infty))$. Introducing $\tilde{A}:=\gamma A$, the following claims hold.
\begin{itemize}
\item[(i)] If $c_j^{kl}(A)=0$ for all $j,k,l\in \{1,\dots,n\}$, then also $c_j^{kl}(\tilde A)=0$ for all $j,k,l\in \{1,\dots,n\}$.

\item[(ii)] If $A$ is diagonal and type-$\eps^2$, then also $\tilde A$ is type-$\eps^2$.
\end{itemize}
\end{theorem}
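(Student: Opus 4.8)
The plan is to derive closed-form expressions for the invariant measure and correctors of $\tilde A=\gamma A$ in terms of those of $A$, and then to read off a transformation rule for the third-order tensor $c_j^{kl}$. First I would identify the invariant measure $\tilde r$ of $\tilde A$. Since $\gamma\in C^{0,\alpha}(\T^n;(0,\infty))$, the map $\tilde A=\gamma A$ belongs to $C^{0,\alpha}(\T^n;\calS^n_+)$ and is uniformly elliptic, so $\tilde r$ and the correctors $\tilde v^{kl}$ are well defined. The function $\gamma\tilde r$ is positive, $Y$-periodic, and satisfies $-D^2\!:\!(A\,\gamma\tilde r)=-D^2\!:\!(\tilde A\,\tilde r)=0$ in $Y$; since the space of $Y$-periodic functions $\rho$ with $-D^2\!:\!(A\rho)=0$ is one-dimensional and spanned by $r$, this forces $\gamma\tilde r=c\,r$ for a constant $c>0$. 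Imposing $\int_Y\tilde r=1$ and using $1/\gamma=C:A$ yields $c=\big(\int_Y r/\gamma\big)^{-1}=\big(\int_Y r\,(C:A)\big)^{-1}=(C:\bar A)^{-1}$, so in particular $\tilde r\tilde A=c\,rA$, $\tilde{\bar a}_{kl}=c\,\bar a_{kl}$, and $c\,(C:\bar A)=1$.

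Next I would relate the correctors. Setting $V:=\sum_{p,q=1}^n C_{pq}\,v^{pq}$, the corrector $\tilde v^{kl}$ of $\tilde A$ solves $-A:D^2\tilde v^{kl}=a_{kl}-(c/\gamma)\,\bar a_{kl}=a_{kl}-c\,\bar a_{kl}\,(C:A)$. Using the $v^{pq}$-equations to write $C:A=\sum_{p,q}C_{pq}a_{pq}=\sum_{p,q}C_{pq}(\bar a_{pq}-A:D^2v^{pq})=C:\bar A-A:D^2V$, and then using $c\,(C:\bar A)=1$, this becomes
\[
-A:D^2\tilde v^{kl}=(a_{kl}-\bar a_{kl})+c\,\bar a_{kl}\,A:D^2V=-A:D^2\big(v^{kl}-c\,\bar a_{kl}\,V\big).
\]
Since $\tilde v^{kl}-v^{kl}+c\,\bar a_{kl}\,V$ is a $Y$-periodic solution of $-A:D^2\varphi=0$ with zero mean, it vanishes; thus $\tilde v^{kl}=v^{kl}-c\,\bar a_{kl}\,V$.

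Combining $\tilde r\tilde A=c\,rA$ with the corrector identity, I would then compute
\[
c_j^{kl}(\tilde A)=\int_Y\tilde r\tilde A\, e_j\cdot\nabla\tilde v^{kl}=c\,c_j^{kl}(A)-c^2\,\bar a_{kl}\sum_{p,q=1}^n C_{pq}\,c_j^{pq}(A).
\]
Part (i) is immediate from this formula. For part (ii), I would invoke that for diagonal $A$ one has $v^{kl}\equiv0$ when $k\neq l$, so $c_j^{kl}(A)=0$ whenever $k\neq l$; the hypothesis $C_{jkl}(A)=0$ then also gives $c_j^{kk}(A)=0$ (take $j\neq k$ and $l=k$, so $C_{jkk}(A)=c_j^{kk}(A)$, and note $C_{kkk}(A)=3\,c_k^{kk}(A)$), hence all $c_j^{kl}(A)$ vanish. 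By part (i), all $c_j^{kl}(\tilde A)$ vanish, so $C_{jkl}(\tilde A)=0$ for all $j,k,l$ and $\tilde A$ is type-$\eps^2$.

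I expect the main obstacle to be organizational rather than conceptual: the delicate point is the corrector computation, where one must re-express the source $c\,\bar a_{kl}(C:A)$ through the $v^{pq}$-equations and verify that the leftover constant $c\,(C:\bar A)-1$ is zero — which is exactly the normalization $\int_Y\tilde r=1$. I would also keep in mind that $C$ is not assumed symmetric, but since $A$ and the correctors are symmetric in their indices ($v^{pq}=v^{qp}$), only the symmetric part of $C$ enters the argument and nothing breaks.
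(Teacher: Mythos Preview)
Your proof is correct and follows essentially the same route as the paper: both establish the identities $\tilde r\tilde A=c\,rA$ with $c=(C:\bar A)^{-1}$ and $\tilde v^{kl}=v^{kl}-c\,\bar a_{kl}\sum_{p,q}C_{pq}v^{pq}$, leading to the same transformation formula $c_j^{kl}(\tilde A)=c\,c_j^{kl}(A)-c^2\bar a_{kl}\sum_{p,q}C_{pq}\,c_j^{pq}(A)$, from which (i) and (ii) follow exactly as you argue. The only organizational difference is that the paper routes the computation through its Lemma~\ref{Lmm: aB} and an auxiliary function $w$ solving $-\tilde A:D^2w=\gamma-\bar\gamma$ (then identified as $w=-c\sum_{p,q}C_{pq}v^{pq}$), whereas you derive the corrector identity directly; the content is the same.
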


As the final major result of this paper, we show that the set of type-$\eps$ maps is open and dense in $C^{0,\alpha}(\T^n;\calS^n_+)$.

\begin{theorem}[Density of type-$\eps$ maps in $C^{0,\alpha}(\T^n;\calS^n_+)$]\label{Thm: density}
Let $n\geq 2$ and $\alpha\in (0,1)$. Then, the subset 
\begin{align*}
\{A\in C^{0,\alpha}(\T^n;\calS^n_+):A\text{ is type-$\eps$}\}
\end{align*}
is open and dense in $C^{0,\alpha}(\T^n;\calS^n_+)$.
\end{theorem}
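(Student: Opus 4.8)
The plan is to reduce both assertions to the single statement that the map
\[
\mathcal{F}\colon C^{0,\alpha}(\T^n;\calS^n_+)\longrightarrow \R^{n^3},\qquad \mathcal{F}(A):=\big(C_{jkl}(A)\big)_{1\le j,k,l\le n},
\]
is real-analytic (in particular continuous), where $C^{0,\alpha}(\T^n;\calS^n_+)$ is an open subset of the Banach space $C^{0,\alpha}(\T^n;\R^{n\times n}_{\mathrm{sym}})$ (openness of this subset being immediate, since pointwise positive definiteness survives $C^0$-small perturbations). Granting this, the set of type-$\eps$ maps equals $\mathcal{F}^{-1}(\R^{n^3}\setminus\{0\})$ and is therefore open; and for density I would connect an arbitrary $A$ to a fixed type-$\eps$ map by a line segment and invoke the one-variable identity theorem.

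To establish the analyticity I would argue in two steps. For the correctors, note that for $A$ near a fixed $A_0$ the pair $(v^{kl},\bar a_{kl})$ is the unique solution of $-A:D^2v^{kl}+\bar a_{kl}=a_{kl}$ with $\int_Y v^{kl}=0$, i.e.\ $(v^{kl},\bar a_{kl})=\mathcal{T}_A^{-1}(a_{kl})$, where $\mathcal{T}_A(v,\lambda):=-A:D^2v+\lambda$ is, by Schauder estimates together with the Fredholm alternative for the nondivergence operator $-A:D^2$ on $\T^n$, a Banach-space isomorphism from $\{v\in C^{2,\alpha}(\T^n):\int_Y v=0\}\times\R$ onto $C^{0,\alpha}(\T^n)$; since $A\mapsto\mathcal{T}_A$ is affine and bounded, $A\mapsto\mathcal{T}_A^{-1}$ is real-analytic (Neumann series), and composing with the bounded linear entry map $A\mapsto a_{kl}$ yields real-analyticity of $A\mapsto(v^{kl},\bar a_{kl})\in C^{2,\alpha}(\T^n)\times\R$. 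For the invariant measure I would use the analytic implicit function theorem: writing $r_A=r_{A_0}+\rho$, the equation $-D^2:\big((r_{A_0}+\rho)A\big)=0$, $\int_Y\rho=0$, has at $(A_0,0)$ the $\rho$-linearization $\sigma\mapsto -D^2:(\sigma A_0)$, which — in a suitable negative-order Hölder framework, where its range is the codimension-one annihilator of the constants — is an isomorphism on $\{\sigma:\int_Y\sigma=0\}$, so $A\mapsto r_A$ is real-analytic into $C^{0,\alpha}(\T^n)$. Finally $c_j^{kl}(A)=\int_Y r_A\,Ae_j\cdot\nabla v^{kl}$ is a bounded trilinear form evaluated at the analytically varying arguments $r_A$, $A$ and $\nabla v^{kl}\in C^{1,\alpha}(\T^n;\R^n)$, hence real-analytic in $A$, and therefore so is $\mathcal{F}$.

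For density, fix $A\in C^{0,\alpha}(\T^n;\calS^n_+)$; if it is already type-$\eps$ there is nothing to prove, so assume it is type-$\eps^2$. Choose a type-$\eps$ reference map $A_1$: for $n=2$ take the map in \eqref{cbad from ST}, and for $n\ge 3$ take $A_1(y):=\mathrm{diag}\big(\hat A(y_1,y_2,y_3),I_{n-3}\big)$, where $\hat A\in C^\infty(\T^3;\calS^3_+)$ is the type-$\eps$ map from Remark~\ref{Rk: Cor of Thm 1.1 not true in n>2}(i). A direct computation shows that $r_{A_1}$ is the trivial extension of the invariant measure of $\hat A$, that $v^{kl}_{A_1}$ is the trivial extension of the corresponding corrector of $\hat A$ when $k,l\le 3$ while $v^{kl}_{A_1}\equiv 0$ when $\max(k,l)>3$, and hence $C_{jkl}(A_1)=C_{jkl}(\hat A)$ for $j,k,l\le 3$; since $\hat A$ is type-$\eps$, so is $A_1$. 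Now set $A_t:=(1-t)A+tA_1$; by convexity $A_t\in C^{0,\alpha}(\T^n;\calS^n_+)$ for $t\in[0,1]$, and since $A$ and $A_1$ are uniformly elliptic, $A_t$ remains in $C^{0,\alpha}(\T^n;\calS^n_+)$ for $t$ in a slightly larger open interval $(-\eta,1+\eta)$. Fix a triple $(j_0,k_0,l_0)$ with $C_{j_0k_0l_0}(A_1)\ne 0$. Since $t\mapsto A_t$ is affine, the analyticity above makes $t\mapsto C_{j_0k_0l_0}(A_t)$ real-analytic on $(-\eta,1+\eta)$; being nonzero at $t=1$, by the identity theorem it cannot vanish identically near $0$, so there exist $t_m\searrow 0$ with $A_{t_m}$ type-$\eps$ and $\|A_{t_m}-A\|_{C^{0,\alpha}}=t_m\|A_1-A\|_{C^{0,\alpha}}\to 0$. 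This proves density, and combined with the openness established above, the theorem follows.

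The step I expect to be the main obstacle is the analytic (or even merely continuous) dependence of the invariant measure $r_A$ on $A$: since the defining equation is in double-divergence form, one must set up the function-space framework carefully — $r_A$ is in general no smoother than $C^{0,\alpha}$ and the operator $\sigma\mapsto -D^2:(\sigma A)$ naturally takes values in a negative-order space — and then verify that its linearization is a Banach-space isomorphism onto the correct codimension-one target, while also keeping track of the coupling through $\bar a_{kl}$ in the corrector equations. A secondary point requiring care is the claim that the block-diagonal map $A_1$ in dimension $n\ge 3$ genuinely inherits the type-$\eps$ property of $\hat A$, which amounts precisely to the restriction computation for $r_{A_1}$, $v^{kl}_{A_1}$ and $c_j^{kl}(A_1)$ indicated above.
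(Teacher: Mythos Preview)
Your approach is correct in outline but genuinely different from the paper's, and the difference is instructive.

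For \emph{openness}, the paper proves only continuity of $A\mapsto c_j^{kl}(A)$, and does so by elementary means: uniform $C^{0,\alpha}$ a~priori bounds for the invariant measure (citing \cite{DK17}) and uniform $C^{2,\alpha}$ Schauder bounds for the correctors give precompactness along any $C^{0,\alpha}$-convergent sequence $A_m\to A$, after which passing to the limit in the integral is routine. Your analyticity claim is strictly stronger than what openness requires.

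For \emph{density}, the paper does \emph{not} appeal to analyticity or the identity theorem at all. Instead it gives an explicit two-step perturbation of a given type-$\eps^2$ map $A^0$: first, an additive perturbation $A^1=A^0+\delta P$ with $P=\tfrac{1}{r^0}Z$ and $Z$ chosen so that $D^2{:}Z=0$, which forces $r^{A^1}=r^{A^0}$ and produces a test function $q$ with $\int r^1A^1e_1\cdot\nabla q\neq 0$; second, a multiplicative perturbation $A=\gamma A^1$ with $\gamma=(1+A^1{:}D^2\phi)^{-1}$ and $\phi=sq$, for which Lemma~\ref{Lmm: aB} gives a closed formula for $c_1^{11}(A)$ that is visibly nonzero. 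The whole point of this construction is that it never requires understanding how $r_A$ varies with $A$: the first step fixes the invariant measure, and the second step changes it in a way that Lemma~\ref{Lmm: aB} handles explicitly.

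Your line-segment argument is more conceptual and, once analyticity is in hand, arguably cleaner. But the price is exactly the obstacle you flag: analytic dependence of $r_A$ on $A\in C^{0,\alpha}$ requires setting up the double-divergence operator $\sigma\mapsto -D^2{:}(\sigma A_0)$ as a Banach-space isomorphism from $\{\sigma\in C^{0,\alpha}:\int\sigma=0\}$ onto a suitable negative-order target, which is not entirely standard at $C^{0,\alpha}$ regularity. This can be done, but it is real work that the paper's argument sidesteps completely. Your secondary concern---that the block-diagonal extension $A_1=\mathrm{diag}(\hat A,I_{n-3})$ inherits the type-$\eps$ property---is unproblematic and is carried out explicitly in the paper (Theorem~\ref{Thm: construct c-bad trace} and Remark~\ref{Rk: Exp c-bad n>=4}).
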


We provide a further systematic study of type-$\eps^2$ and type-$\eps$ maps in Section \ref{sec:further}. See Lemmata \ref{Thm: explicit perturbation}--\ref{Thm: scalar multiples} and Remarks \ref{Rk: Example recover ST}--\ref{Rk: Suff cond for c-good} for the results.

\subsection{Structure of the paper}

In Section \ref{Sec: Pfs of main results}, we present the proofs of the main results, i.e., we prove Theorems \ref{Thm: Main}--\ref{Thm: density} and Corollaries \ref{Cor: Cor1}--\ref{Cor: Cor2}. 

In Section \ref{sec:further}, we conduct a further analysis on type-$\eps^2$ and type-$\eps$ diffusion matrices. More precisely, we provide perturbation arguments for the creation of type-$\eps$ maps (Section \ref{Subsec: Pertarg}), we provide a criterion for the classification of diffusion matrices $A$ for which $r,rA$ have a special structure (Section \ref{Subsec: 3.2}), we investigate the existence of type-$\eps$ diffusion matrices with constant trace in two dimensions (Section \ref{Subsec: 3.new}), we study scalar multiples and sums of type-$\eps^2$ maps (Section \ref{Subsec: 3.3}), we study the case $r\equiv 1$ (Section \ref{Subsec: 3.4}), and we provide some sufficient conditions for a map to be type-$\eps^2$ (Section \ref{Subsec: 3.5}).

In Section \ref{Sec: 4}, we collect various counterexamples to demonstrate the results of Remark \ref{Rk: Cor of Thm 1.1 not true in n>2} (Sections \ref{Subsec: c-bad diagonal map with constant trace in $n=3$}--\ref{Subsec: Example with optimal rate Oeps}) and Remarks \ref{Rk: A+I}--\ref{Rk: Mult by r} (Sections \ref{Subsec: A+I}--\ref{Subsec: c-bad diagonal r=1 n=2}).

Finally, in Section \ref{Sec: Conc}, we give some concluding remarks as well as open problems.

\section{Proofs of the main results}\label{Sec: Pfs of main results}

In this section, we prove the main results of this paper, namely Theorems \ref{Thm: Main}--\ref{Thm: density} and Corollaries \ref{Cor: Cor1}--\ref{Cor: Cor2}. 

\subsection{Proof of Theorem \ref{Thm: Main} and its corollaries}\label{Subsec: Pf of Thm Main and cor}

\begin{proof}[Proof of Theorem \ref{Thm: Main}]
Let $C,M\in \R^{n\times n}_{\mathrm{sym}}$, let $a\in C^{0,\alpha}(\T^n)$ for some $\alpha \in (0,1)$, and suppose that the map $A:\R^n \rightarrow \R^{n\times n}_{\mathrm{sym}}$ given by
\begin{align}\label{Pf Thm 1.1: A=C+aM}
A(y):= C + a(y) M \quad\text{for}\quad y\in \R^n
\end{align}
satisfies $A\in C^{0,\alpha}(\T^n;\calS^n_+)$. Let $r\in C^{0,\alpha}(\T^n;(0,\infty))$ denote the invariant measure of $A$ defined as the solution to \eqref{r problem}, let 
\begin{align}\label{Pf Thm 1.1: abar}
\bar{a}:=\int_Y ra,
\end{align}
and let $v^{kl}\in C^{2,\alpha}(\T^n)$ denote the solution to the (k,l)-th cell problem \eqref{vij problem} for $1\leq k,l\leq n$. 
We show that
\begin{align*}
c_j^{kl}(A) = \int_Y rAe_j\cdot \nabla v^{kl} = 0\qquad \forall\, 1\leq j,k,l\leq n.
\end{align*}
\textbf{Step 0:} We introduce the function $w\in C^{2,\alpha}(\T^n)$ to be the unique solution to
\begin{align}\label{Pf Thm 1.1: w}
-A:D^2 w = a-\bar{a}\quad\text{in }Y,\qquad w \text{ is }Y\text{-periodic},\qquad \int_Y w = 0.
\end{align}
Note that $w$ is well-defined since $\int_Y r(a-\bar{a}) = 0$. Further, we define 
\begin{align}\label{Pf Thm 1.1: xi eta}
\xi := 1 + M:D^2 w \in C^{0,\alpha}(\T^n),\qquad \eta^{kl}:=m_{kl} w\in C^{2,\alpha}(\T^n)
\end{align}
for $1\leq k,l\leq n$.

\medskip
\noindent \textbf{Step 1:} We prove that $r = \xi$. Note that $\xi\in C^{0,\alpha}(\T^n)$ and $\int_Y \xi = 1$. For any $\fhi\in C^{\infty}(\T^n)$ we have that 
\begin{align*}
\int_Y \xi (-A:D^2 \fhi) &= \int_Y (1-\xi) (C:D^2 \fhi) + \int_Y ( \bar{a} - \xi a) (M:D^2 \fhi) \\ &=\int_Y (-C:D^2 w) (M:D^2 \fhi) - \int_Y ( aM:D^2 w + a-\bar{a}) (M:D^2 \fhi) \\&= \int_Y (-A:D^2 w -(a - \bar{a}))(M:D^2 \fhi) \\ &= 0,
\end{align*}
where we have used in the first equality \eqref{Pf Thm 1.1: A=C+aM} and the fact that $\int_Y D^2 \fhi = 0$, in the second equality the definition of $\xi$ from \eqref{Pf Thm 1.1: xi eta} and the fact that by integration by parts there holds
\begin{align*}
\int_Y (M:D^2 w) (C:D^2 \fhi) &= \sum_{i,j,k,l=1}^n m_{ij}c_{kl} \int_Y \partial_{ij}^2 w\,  \partial_{kl}^2 \fhi \\&= \sum_{i,j,k,l=1}^n m_{ij}c_{kl} \int_Y \partial_{kl}^2 w\,  \partial_{ij}^2 \fhi = \int_Y (C:D^2 w) (M:D^2 \fhi),
\end{align*}
in the third equality \eqref{Pf Thm 1.1: A=C+aM}, and in the fourth equality that $w$ satisfies \eqref{Pf Thm 1.1: w}. It follows that $\xi\in C^{0,\alpha}(\T^n)$ is a solution to
\begin{align*}
-D^2:(\xi A) = 0\quad\text{in }Y,\qquad \xi \text{ is }Y\text{-periodic},\qquad \int_Y \xi = 1,
\end{align*}
and hence, by the uniqueness of solutions, we find that 
\begin{align}\label{Pf Thm 1.1: r=xi}
r = \xi = 1 + M:D^2 w.
\end{align}
\textbf{Step 2:} We prove that $v^{kl} = \eta^{kl}$ for all $1\leq k,l\leq n$. Note that we have $\eta^{kl}\in C^{2,\alpha}(\T^n)$ and $\int_Y \eta^{kl} = 0$. Further, by the definition of $\eta^{kl}$ from \eqref{Pf Thm 1.1: xi eta}, the fact that $w$ satisfies \eqref{Pf Thm 1.1: w}, and the definition of $\bar{a}$ from \eqref{Pf Thm 1.1: abar}, we have that
\begin{align*}
-A:D^2 \eta^{kl} = m_{kl}(a-\bar{a}) = (c_{kl}+m_{kl} a) - \int_Y r(c_{kl}+m_{kl} a) = a_{kl} - \int_Y r a_{kl}.
\end{align*}
It follows that $\eta^{kl}\in C^{2,\alpha}(\T^n)$ is a solution to
\begin{align*}
-A:D^2 \eta^{kl} = a_{kl}-\int_Y r a_{kl}\quad\text{in }Y,\qquad \eta^{kl} \text{ is }Y\text{-periodic},\qquad \int_Y \eta^{kl} = 0,
\end{align*} 
and hence, by uniqueness of solutions, we find that
\begin{align}\label{Pf of Thm 1.1: vkl=etakl}
v^{kl} = \eta^{kl} = m_{kl} w\qquad \forall\, 1\leq k,l\leq n.
\end{align}
\textbf{Step 3:} We show that $c_j^{kl}(A)=0$ for all $1\leq j,k,l\leq n$. First, we note that by the definition of $A$ from \eqref{Pf Thm 1.1: A=C+aM}, the fact that $w$ satisfies \eqref{Pf Thm 1.1: w}, and the relation \eqref{Pf Thm 1.1: r=xi} from Step 1, there holds
\begin{align}\label{Pf of Thm 1.1: -C:D^2w}
-C:D^2 w = -A:D^2 w + aM:D^2 w = a-\bar{a} + (r-1)a = ra - \bar{a}.
\end{align}
For any $1\leq j,k,l\leq n$, we find that
\begin{align*}
c_j^{kl}(A) &= \int_Y rAe_j\cdot \nabla v^{kl} \\&= \int_Y (r-1)(Ce_j\cdot \nabla v^{kl})  + \int_Y (ra-\bar{a})(Me_j\cdot \nabla v^{kl}) \\ &=m_{kl}\int_Y (M:D^2 w)(Ce_j\cdot \nabla w)  - m_{kl}\int_Y (C:D^2 w)(Me_j\cdot \nabla w) \\&= 0,
\end{align*}
where we have used in the second equality that $\int_Y \nabla v^{kl} = 0$, in the third equality the relations \eqref{Pf Thm 1.1: r=xi}, \eqref{Pf of Thm 1.1: vkl=etakl} from Step 2 and \eqref{Pf of Thm 1.1: -C:D^2w}, and in the fourth equality that
\begin{align*}
\int_Y (\partial_{st}^2 w) (\partial_i w) = 0\qquad \forall 1\leq i,s,t\leq n
\end{align*}
which holds as integration by parts shows that $\int_Y (\partial_{st}^2 w) (\partial_i w) = -\int_Y (\partial_i w)(\partial_{st}^2 w)$.
\end{proof}

Let us note that the function $w$ from \eqref{Pf Thm 1.1: w} is the key to the proof of Theorem \ref{Thm: Main}. Indeed, $w$ encompasses the information needed about $a$ to obtain explicit formulas for the invariant measure $r$ (see \eqref{Pf Thm 1.1: r=xi}) and the correctors $v^{kl}$ (see \eqref{Pf of Thm 1.1: vkl=etakl}). 

We are now in a position to give quick proofs of Corollaries \ref{Cor: Cor1}--\ref{Cor: Cor2}.

\begin{proof}[Proof of Corollary \ref{Cor: Cor1}]
Let $c> 0$ be a positive constant, $a\in C^{0,\alpha}(\T^2;(0,c))$ for some $\alpha \in (0,1)$, and let $A\in C^{0,\alpha}(\T^2;\calS^2_+)$ be the map given by
\begin{align*}
A(y):= \mathrm{diag}(a(y),c-a(y)) \quad\text{for}\quad y\in \R^2.
\end{align*}
Then, setting $C:=\mathrm{diag}(0,c)\in \R^{2\times 2}_{\mathrm{sym}}$ and $M:=\mathrm{diag}(1,-1)\in \R^{2\times 2}_{\mathrm{sym}}$, we have that
\begin{align*}
A(y) = C + a(y) M 
\end{align*}
for any $y\in \R^2$, and thus, $A$ is type-$\eps^2$ by Theorem \ref{Thm: Main}.
\end{proof}

\begin{proof}[Proof of Corollary \ref{Cor: Cor2}]
Let $A\in C^{0,\alpha}(\T^2;\calS^2_+)$ with $\alpha \in (0,1)$ be of the form 
\begin{align*}
A(y) = \mathrm{diag}(a_1(y),a_2(y))\quad\text{for}\quad y\in \R^2
\end{align*}
for some $a_1,a_2\in C^{0,\alpha}(\T^2;(0,\infty))$, and let $g\in C(\bar{\Omega})$ be such that $g\in W^{5,q}(\Omega)$ for some $q>n$. We need to show that the solution $u^{\eps}$ of 
\begin{align*}
\left\{\begin{aligned}-A\left(\frac{\cdot}{\eps}\right):D^2 u^{\eps} &= 0& &\text{in }\Omega,\\
u^{\eps} &= g&  &\text{on }\partial\Omega,\end{aligned}\right.
\end{align*}
converges to the solution $u$ of the homogenized problem in the $L^{\infty}(\Omega)$-norm with rate of convergence 
\begin{align}\label{Pf of Cor 1.2: to show}
\left\|u^{\eps} - u \right\|_{L^{\infty}(\Omega)} = \calO(\eps^2)\quad\text{as }\eps \searrow 0.
\end{align}
To this end, note that $u^{\eps}$ is also the unique solution to the problem
\begin{align*}
\left\{\begin{aligned}-\tilde{A}\left(\frac{\cdot}{\eps}\right):D^2 u^{\eps} &= 0& &\text{in }\Omega,\\
u^{\eps} &= g&  &\text{on }\partial\Omega,\end{aligned}\right.
\end{align*}
with $\tilde{A}\in C^{0,\alpha}(\T^2;\calS^2_+)$ defined as $\tilde{A}(y) := \frac{1}{a_1(y) + a_2(y)}A(y)$ for $y\in \R^2$. Noting that $\tilde{A}$ is of the form $\tilde{A} = \mathrm{diag}(\tilde{a}_1,\tilde{a}_2)$ for some $\tilde{a}_1,\tilde{a}_2\in C^{0,\alpha}(\T^2;(0,\infty))$ with $\tilde{a}_1+\tilde{a}_2\equiv 1$, we deduce from Corollary \ref{Cor: Cor1} that $\tilde{A}$ is type-$\eps^2$, and \eqref{Pf of Cor 1.2: to show} follows.
\end{proof}

\subsection{Proof of Theorem \ref{Thm: Complete char}}\label{Subsec: Pf of Thm 1.2}

We start by proving a lemma which will be helpful throughout this work.

\begin{lemma}\label{Lmm: aB}
Let $a\in C^{0,\alpha}(\T^n;(0,\infty))$, $B\in C^{0,\alpha}(\T^n;\calS^n_+)$ for some $\alpha \in (0,1)$, and consider the map $A\in C^{0,\alpha}(\T^n;\calS^n_+)$ given by 
\begin{align*}
A(y):=a(y) B(y)\quad\text{for}\quad y\in \R^n.
\end{align*}
Let $r_B\in C^{0,\alpha}(\T^n;(0,\infty))$, $\bar{B}:=\int_Y r_B B\in \calS^n_+$, $v^{kl}_B\in C^{2,\alpha}(\T^n)$ denote the invariant measure, effective coefficient and correctors corresponding to $B$, respectively. 
Further, denoting the invariant measure of $A$ by $r\in C^{0,\alpha}(\T^n;(0,\infty))$, we write $\bar{a}:=\int_Y ra > 0$ and introduce the function $w\in C^{2,\alpha}(\T^n)$ to be the unique solution to
\begin{align}\label{w for lemma}
-A:D^2 w = a-\bar{a}\quad\text{in }Y,\qquad w \text{ is }Y\text{-periodic},\qquad \int_Y w = 0.
\end{align}
Then, the following assertions hold.
\begin{itemize}
\item[(i)] The invariant measure $r$ of $A$ is given by
\begin{align*}
r =d\,\frac{r_B}{a}\quad\text{ where }\quad d:= \left(\int_Y \frac{r_B}{a}\right)^{-1} = \bar{a},
\end{align*}
and the effective coefficient corresponding to $A$ is given by $\bar{A} = \bar{a}\bar{B}$.

\item[(ii)] The correctors $v^{kl}$ of $A$ are given by
\begin{align*}
v^{kl} = v^{kl}_B + \bar{b}_{kl} w,\qquad k,l\in\{1,\dots,n\}.
\end{align*}
\item[(iii)] The values $c_j^{kl}(A)$ are given by
\begin{align*}
c_j^{kl}(A) = \bar{a}\left(c_j^{kl}(B)+\bar{b}_{kl} \int_Y r_B B e_j\cdot \nabla w  \right),\qquad j,k,l\in\{1,\dots,n\}.
\end{align*}
\end{itemize}
\end{lemma}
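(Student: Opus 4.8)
The plan is to mimic the proof of Theorem~\ref{Thm: Main} but in the multiplicative setting $A = aB$, exploiting the fact that multiplying the coefficient matrix by a scalar function does not change the relevant PDE structure too much, only the invariant measure and the correctors in a controlled way. I would prove the three assertions in order, as each builds on the previous one.

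For \textbf{(i)}, I would verify directly that the claimed candidate $\rho := d\,r_B/a$ with $d := (\int_Y r_B/a)^{-1}$ solves the invariant measure problem \eqref{r problem} for $A$. Since $\rho A = d\, r_B B$, we have $-D^2:(\rho A) = -d\, D^2:(r_B B) = 0$ because $r_B$ is the invariant measure of $B$; positivity and the normalization $\int_Y \rho = 1$ are immediate. By uniqueness, $r = \rho$. The identity $d = \bar a$ then follows from $\bar a = \int_Y ra = \int_Y d\,\frac{r_B}{a}\,a = d\int_Y r_B = d$. For the effective coefficient, $\bar A = \int_Y rA = \int_Y d\,\frac{r_B}{a}\,aB = d\int_Y r_B B = \bar a\,\bar B$.

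For \textbf{(ii)}, I would check that $\eta^{kl} := v^{kl}_B + \bar b_{kl} w$ solves the $(k,l)$-th cell problem \eqref{vij problem} for $A$. Compute $-A:D^2\eta^{kl} = -aB:D^2 v^{kl}_B - \bar b_{kl}\, A:D^2 w = a(b_{kl} - \bar b_{kl}) + \bar b_{kl}(a - \bar a)$, using that $v^{kl}_B$ solves the $B$-cell problem (so $-B:D^2 v^{kl}_B = b_{kl} - \bar b_{kl}$) and that $w$ solves \eqref{w for lemma}. This simplifies to $a b_{kl} - \bar b_{kl}\bar a = a_{kl} - \bar a_{kl}$, where $a_{kl} = a b_{kl}$ is the $(k,l)$-entry of $A$ and $\bar a_{kl} = \bar a \bar b_{kl}$ is the $(k,l)$-entry of $\bar A = \bar a \bar B$ by part (i). The periodicity and zero-mean normalization hold since $\int_Y v^{kl}_B = \int_Y w = 0$. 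By uniqueness of correctors, $v^{kl} = \eta^{kl}$.

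For \textbf{(iii)}, I would substitute the formulas from (i) and (ii) into the definition \eqref{eq:def-cjkl}:
\begin{align*}
c_j^{kl}(A) = \int_Y rAe_j\cdot\nabla v^{kl} = \int_Y \bar a\,r_B B e_j\cdot\nabla\big(v^{kl}_B + \bar b_{kl} w\big) = \bar a\left(c_j^{kl}(B) + \bar b_{kl}\int_Y r_B B e_j\cdot\nabla w\right),
\end{align*}
using $rA = \bar a\, r_B B$ and linearity. The main obstacle here is essentially bookkeeping: one must be careful that $\bar A = \bar a\bar B$ entry-wise so that the right-hand side of the $A$-cell problem matches $a_{kl} - \bar a_{kl}$, and that the regularity classes ($C^{0,\alpha}$ for $r$, $C^{2,\alpha}$ for $w$ and correctors) are preserved under these algebraic manipulations — both of which are standard given the earlier setup. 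There is no genuine analytic difficulty; the lemma is a direct computation once the right candidates are guessed, and the guessing is guided precisely by the structure already revealed in the proof of Theorem~\ref{Thm: Main}.
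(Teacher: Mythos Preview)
Your proposal is correct and follows essentially the same approach as the paper: verify the candidate $r = d\,r_B/a$ solves the invariant-measure problem (via $rA = d\,r_B B$), check that $v^{kl}_B + \bar b_{kl} w$ solves the $(k,l)$-th cell problem for $A$, and substitute into the definition of $c_j^{kl}(A)$. The only cosmetic difference is that the paper writes the invariant-measure verification in weak form with test functions, whereas you state $-D^2:(\rho A)=0$ directly; since $r_B\in C^{0,\alpha}$ this must of course be read weakly, but the content is identical.
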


\begin{proof}
(i) It is quickly checked that the function $r:= d \frac{r_B}{a}$ with $d:= \left(\int_Y \frac{r_B}{a}\right)^{-1}$ satisfies $r\in C^{0,\alpha}(\T^n;(0,\infty))$, $\int_Y r = 1$, and we have that
\begin{align*}
\int_Y r(-A:D^2 \fhi) = \int_Y \left(d\,\frac{r_B}{a} \right)(-aB:D^2 \fhi) = d\int_Y r_B(-B:D^2 \fhi) = 0
\end{align*}
for all $\fhi\in C^{\infty}(\T^n)$. Hence, $r$ is indeed the invariant measure of $A$. We conclude the proof of (i) by noting that $d = \int_Y d\,\frac{r_B}{a}a = \int_Y ra = \bar{a}$ and
\begin{align*}
\bar{A} = \int_Y rA = \int_Y \left(\bar{a}\frac{r_B}{a}\right)aB = \bar{a}\int_Y r_B B = \bar{a}\bar{B}.
\end{align*}
(ii) We observe that the function $v^{kl}:=v^{kl}_B+ \bar{b}_{kl} w$ satisfies $v^{kl}\in C^{2,\alpha}(\T^n)$, $\int_Y v^{kl} = 0$ and we have that
\begin{align*}
-A:D^2 v^{kl} &= a\left(-B:D^2 v^{kl}_B\right) +\bar{b}_{kl} \left(- A:D^2 w\right) \\ &= a(b_{kl} - \bar{b}_{kl}) + \bar{b}_{kl}(a-\bar{a}) \\ &= a_{kl} - \bar{a}_{kl}
\end{align*}
for any $1\leq k,l\leq n$, where we have used that $A=aB$, the fact that $v^{kl}_B$ solves the (k,l)-th cell problem corresponding to $B$ and $w$ solves \eqref{w for lemma}, and the result from (i) that $\bar{A}=\bar{a}\bar{B}$.

\medskip
\noindent(iii) We use the definition \eqref{eq:def-cjkl} of $c_j^{kl}(A)$ and $c_j^{kl}(B)$, as well as the results from (i) and (ii) to obtain that
\begin{align*}
\frac{c_j^{kl}(A)}{\bar{a}} = \int_Y \frac{r}{\bar{a}}A e_j\cdot \nabla v^{kl} = \int_Y r_B B e_j \cdot \nabla\left(v^{kl}_B+ \bar{b}_{kl} w \right) = c_j^{kl}(B)+\bar{b}_{kl} \int_Y r_B B e_j\cdot \nabla w  
\end{align*} 
for any $1\leq j,k,l\leq n$.
\end{proof}

\begin{proof}[Proof of Theorem \ref{Thm: Complete char}]
Let $a\in C^{0,\alpha}(\T^2;(0,\infty))$ and $b\in C^{0,\alpha}(\T^2;(-1,1))$ for some $\alpha \in (0,1)$. Further, let $B\in C^{0,\alpha}(\T^2;\calS^2_+)$ be the map defined as
\begin{align}\label{Pf of compl char: B=diag}
B(y) := \mathrm{diag}(1+b(y),1-b(y))\quad\text{for}\quad y\in \R^2,
\end{align}
and let $A\in C^{0,\alpha}(\T^2;\calS^2_+)$ be the map defined as
\begin{align*}
A(y):= a(y)B(y)\quad\text{for}\quad y\in \R^2.
\end{align*}
We denote the invariant measure of $A$ by $r\in C^{0,\alpha}(\T^2;(0,\infty))$ (see \eqref{r problem}), the effective coefficient to $A$ by $\bar{A}:=\int_Y rA \in \calS^2_+$, and the solution to the (k,l)-th cell problem \eqref{vij problem} corresponding to $A$ by $v^{kl}\in C^{2,\alpha}(\T^2)$ for $k,l\in \{1,2\}$. We denote the invariant measure of $B$ by $r_B\in C^{0,\alpha}(\T^2;(0,\infty))$, the effective coefficient to $B$ by $\bar{B}:=\int_Y r_B B \in \calS^2_+$, and we denote the solution to the (k,l)-th cell problem corresponding to $B$ by $v^{kl}_B\in C^{2,\alpha}(\T^2)$ for $k,l\in \{1,2\}$. Further, we write
\begin{align*}
\bar{a}:= \int_Y ra,\qquad \bar{b}:=\int_Y r_B b
\end{align*}
and we note that $\bar{a}>0$ and $\bar{b} \in (-1,1)$, where the latter follows from the fact that 
\begin{align}\label{Pf of compl char: Bbar}
\bar{B} = \mathrm{diag}(1+\bar{b},1-\bar{b})\in \calS^2_+.
\end{align}
\textbf{Step 0:} We introduce the function $w_A\in C^{2,\alpha}(\T^2)$ to be the unique solution to
\begin{align*}
-A:D^2 w_A = a-\bar{a}\quad\text{in }Y,\qquad w_A \text{ is }Y\text{-periodic},\qquad \int_Y w_A = 0,
\end{align*}
and the function $w_B\in C^{2,\alpha}(\T^2)$ to be the unique solution to
\begin{align*}
-B:D^2 w_B = b-\bar{b}\quad\text{in }Y,\qquad w_B \text{ is }Y\text{-periodic},\qquad \int_Y w_B = 0.
\end{align*}
Note that $w_A$ and $w_B$ are well-defined since $\int_Y r(a-\bar{a}) = 0$ and $\int_Y r_B(b-\bar{b}) = 0$.

\medskip
\noindent \textbf{Step 1:} Since $B$ is of the form 
\begin{align*}
B = C + bM\qquad\text{with}\qquad C:=I_2,\quad M:=\mathrm{diag}(1,-1), 
\end{align*}
we know from the proof of Theorem \ref{Thm: Main} that $r_B = 1 + M:D^2 w_B$, i.e., 
\begin{align}\label{Pf of Thm compl char: rB}
r_B = 1 + \partial_{11}^2 w_B -\partial_{22}^2 w_B;
\end{align}
see \eqref{Pf Thm 1.1: r=xi}, that $v^{kl}_B = m_{kl} w_B$ for $k,l\in\{1,2\}$, i.e.,
\begin{align*}
v^{11}_B  = w_B,\qquad v^{22}_B = -w_B,\qquad v^{12}_B = v^{21}_B \equiv 0;
\end{align*}
see \eqref{Pf of Thm 1.1: vkl=etakl}, and that $w_B$ satisfies $-C:D^2 w_B = r_B b - \bar{b}$, i.e., $w_B$ can be equivalently characterized as the unique solution to 
\begin{align}\label{Pf of Thm compl char: Lapw}
-\Delta w_B = r_B b-\bar{b}\quad\text{in }Y,\qquad w_B \text{ is }Y\text{-periodic},\qquad \int_Y w_B = 0;
\end{align}
see \eqref{Pf of Thm 1.1: -C:D^2w}.

\medskip
\noindent \textbf{Step 2:} We apply Lemma \ref{Lmm: aB}(iii) to find that for any $j,k,l\in \{1,2\}$ we have
\begin{align*}
c_j^{kl}(A) = \bar{a}\left(c_j^{kl}(B)+\bar{b}_{kl} \int_Y r_B B e_j\cdot \nabla w_A  \right) = \bar{a}\,\bar{b}_{kl} \int_Y r_B B e_j\cdot \nabla w_A,  
\end{align*}
where we have used in the second equality that there holds $c_j^{kl}(B) = 0$ for all $j,k,l\in \{1,2\}$ by Theorem \ref{Thm: Main}. Hence, using \eqref{Pf of compl char: Bbar}, we find that
\begin{align*}
c_j^{11}(A) = \bar{a}(1+\bar{b}) \int_Y r_B Be_j\cdot \nabla w_A,\qquad c_j^{22}(A) =\bar{a}(1-\bar{b}) \int_Y r_B Be_j\cdot \nabla w_A
\end{align*}
for any $j\in \{1,2\}$, and $c_j^{kl}(A) = 0$ whenever $k\neq l$. We can simplify further by noting that
\begin{align*}
r_B b_{11} &= r_B + (r_B b -\bar{b}) + \bar{b}   =  1 + \partial_{11}^2 w_B -\partial_{22}^2 w_B  - \Delta w_B +\bar{b} = (1+\bar{b}) -2\,\partial_{22}^2 w_B,\\
r_B b_{22} &= r_B - (r_B b -\bar{b}) -\bar{b}  =  1 + \partial_{11}^2 w_B -\partial_{22}^2 w_B  + \Delta w_B -\bar{b}= (1-\bar{b}) +2\,\partial_{11}^2 w_B,
\end{align*}
where we have used the definition of $B$ from \eqref{Pf of compl char: B=diag}, the formula for $r_B$ from \eqref{Pf of Thm compl char: rB}, and the fact that $w_B$ satisfies \eqref{Pf of Thm compl char: Lapw}. Therefore, the values $c^{11}_j(A)$ for $j\in\{1,2\}$ are given by
\begin{align*}
c_1^{11}(A) &= \bar{a}(1+\bar{b}) \int_Y r_B b_{11}\partial_1 w_A = -2\bar{a}(1+\bar{b}) \int_Y (\partial_1 w_A)(\partial_{22}^2 w_B),\\
c_2^{11}(A) &= \bar{a}(1+\bar{b}) \int_Y r_B b_{22}\partial_2 w_A = 2\bar{a}(1+\bar{b}) \int_Y (\partial_2 w_A)(\partial_{11}^2 w_B),
\end{align*}
and the values $c^{22}_j(A)$ for $j\in\{1,2\}$ are given by
\begin{align*}
c_1^{22}(A) &= \bar{a}(1-\bar{b}) \int_Y r_B b_{11}\partial_1 w_A = -2\bar{a}(1-\bar{b}) \int_Y (\partial_1 w_A)(\partial_{22}^2 w_B),\\
c_2^{22}(A) &= \bar{a}(1-\bar{b}) \int_Y r_B b_{22}\partial_2 w_A = 2\bar{a}(1-\bar{b}) \int_Y (\partial_2 w_A)(\partial_{11}^2 w_B).
\end{align*} 
We conclude that $A$ is type-$\eps^2$ if and only if
\begin{align*}
\int_Y (\partial_1 w_A)(\partial_{22}^2 w_B) = \int_Y (\partial_2 w_A)(\partial_{11}^2 w_B) = 0.
\end{align*}
The proof is complete.
\end{proof}

\subsection{Proof of Theorem \ref{Thm: 1/C:A A}}\label{Subsec: Pf of Thm 1.3}

\begin{proof}[Proof of Theorem \ref{Thm: 1/C:A A}]

Let $A\in C^{0,\alpha}(\T^n;\calS^n_+)$ with $\alpha\in (0,1)$ and let $C\in \R^{n\times n}$ be a constant matrix such that for the function $\gamma:\R^n\rightarrow \R$ defined by
\begin{align}\label{Pf of Thm 1/C:A: gamma}
\gamma(y):=\frac{1}{C:A(y)}\quad\text{for}\quad y\in \R^n
\end{align}
there holds $\gamma\in C^{0,\alpha}(\T^n;(0,\infty))$. We set 
\begin{align*}
\tilde{A}:=\gamma A \in C^{0,\alpha}(\T^n;\calS^n_+).
\end{align*}
We only need to prove assertion (i), i.e., that if $c_j^{kl}(A) = 0$ for all $1\leq j,k,l\leq n$, then also $c_j^{kl}(\tilde{A}) = 0$ for all $1\leq j,k,l\leq n$. We denote the invariant measure of $A$ by $r\in C^{0,\alpha}(\T^n;(0,\infty))$ (see \eqref{r problem}), the effective coefficient to $A$ by $\bar{A}:=\int_Y rA \in \calS^n_+$, and the solution to the (k,l)-th cell problem \eqref{vij problem} corresponding to $A$ by $v^{kl}$ for $1\leq k,l\leq n$. 

\medskip
\noindent \textbf{Step 1:} By Lemma \ref{Lmm: aB}, we have for any $1\leq j,k,l\leq n$ that
\begin{align}\label{Pf of Thm 1/C:A: Step 1 result}
c_j^{kl}(\tilde{A}) = \bar{\gamma}\left( c_j^{kl}(A) + \bar{a}_{kl} \int_Y r A e_j\cdot \nabla w\right),
\end{align}
where $w\in C^{2,\alpha}(\T^n)$ denotes the unique solution to
\begin{align}\label{Pf of Thm 1/C:A: fctw}
-\tilde{A}:D^2 w = \gamma-\bar{\gamma}\quad\text{in }Y,\qquad w \text{ is }Y\text{-periodic},\qquad \int_Y w = 0,
\end{align}
and $\bar{\gamma}$ denotes the positive constant 
\begin{align*}
\bar{\gamma}:= \left(\int_Y \frac{r}{\gamma}\right)^{-1} = \frac{1}{C:\bar{A}} > 0,
\end{align*}
where we have used the definition \eqref{Pf of Thm 1/C:A: gamma} of $\gamma$ and $\bar{A}=\int_Y rA$ in the second equality.

\medskip 
\noindent \textbf{Step 2:} We claim that the solution $w$ to \eqref{Pf of Thm 1/C:A: fctw} is given by
\begin{align}\label{Pf of Thm 1/C:A: Step 2 result}
w = -\bar{\gamma} \sum_{i,j=1}^n c_{ij} v^{ij}.
\end{align}
Indeed, it is quickly checked that $w\in C^{2,\alpha}(\T^n)$, $\int_Y w = 0$, and we have that
\begin{align*}
-\tilde{A}:D^2 w = -\bar{\gamma}\, \gamma \sum_{i,j=1}^n c_{ij} (-A:D^2 v^{ij})  = -\bar{\gamma}\, \gamma (C:A-C:\bar{A}) = \gamma - \bar{\gamma},
\end{align*}
where we have used that $\tilde{A} = \gamma A$, the fact that $v^{ij}$ is a solution to the (i,j)-th cell problem corresponding to $A$, and the identities $\gamma(C:A)\equiv 1$ and $\bar{\gamma}(C:\bar{A}) = 1$.

\medskip 
\noindent \textbf{Step 3:} In view of the results \eqref{Pf of Thm 1/C:A: Step 1 result} and \eqref{Pf of Thm 1/C:A: Step 2 result} from the previous steps,  we obtain that for any $1\leq j,k,l\leq n$ there holds
\begin{align*}
c_j^{kl}(\tilde{A})  &=\bar{\gamma}\left(c_j^{kl}(A) -\bar{\gamma}\,\bar{a}_{kl} \sum_{s,t=1}^n c_{st} \int_Y r A e_j\cdot \nabla v^{st}\right) \\&=\bar{\gamma}\left(c_j^{kl}(A) -\bar{\gamma}\,\bar{a}_{kl} \sum_{s,t=1}^n c_{st} \,c_j^{st}(A) \right)\\&=0,
\end{align*}
provided that $c_j^{kl}({A})=0$ for all $1\leq j,k,l\leq n$.
\end{proof}

\subsection{Proof of Theorem \ref{Thm: density}}\label{Subsec: Pf of Thm 1.4}

Let us separate the proof of Theorem \ref{Thm: density} into two parts: the first one being the proof of openness and the second one the proof of denseness of $\{A\in C^{0,\alpha}(\T^n;\calS^n_+):A\text{ is type-$\eps$}\}$ in $C^{0,\alpha}(\T^n;\calS^n_+)$ when $n\geq 2$. 

\begin{proof}[Proof of openness in Theorem \ref{Thm: density}]

Let $n\geq 2$ and $\alpha \in (0,1)$. We need to show that the set 
\begin{align*}
\{A\in C^{0,\alpha}(\T^n;\calS^n_+):A\text{ is type-$\eps$}\}
\end{align*}
is open in $C^{0,\alpha}(\T^n;\calS^n_+)$. To this end, we show that the map 
\begin{align*}
c_j^{kl}:C^{0,\alpha}(\T^n;\calS^n_+) \rightarrow \R,\qquad c_j^{kl}(A) = \int_Y r_A A e_j \cdot \nabla v^{kl}_A
\end{align*}
is continuous for any $1\leq j,k,l\leq n$, where $r_A$ denotes the invariant measure of $A$ and $v^{kl}_A$ the solution to the (k,l)-th cell problem corresponding to $A$. 

Let $(A_m)_{m\in\N}\subset C^{0,\alpha}(\T^n;\calS^n_+)$ be a sequence in $C^{0,\alpha}(\T^n;\calS^n_+)$ such that 
\begin{align*}
\|A_m - A\|_{C^{0,\alpha}(\R^n)}\longrightarrow 0\quad\text{as}\quad m\rightarrow \infty
\end{align*}
for some $A\in C^{0,\alpha}(\T^n;\calS^n_+)$. By a uniform $C^{0,\alpha}$ a priori estimate for the problem of the invariant measure (see \cite{DK17}), and writing $r_m:=r_{A_m}$ to denote the invariant measure of $A_m$, we have that
\begin{align*}
\|r_m\|_{C^{0,\alpha}(\R^n)}\leq C \|A_m\|_{C^{0,\alpha}(\R^n)}
\end{align*}
for some constant $C>0$, uniformly in $m\in \N$. It follows that the sequence $(r_m)_{m\in\N}$ is uniformly bounded in $C^{0,\alpha}(\R^n)$ and it is readily seen that $r_m\rightarrow r$ in $L^2(Y)$, where $r$ denotes the invariant measure of $A$. By a uniform $C^{2,\alpha}$ a priori estimate for the (k,l)-th cell problem from elliptic regularity theory (see \cite{GT01}), and writing $v^{kl}_m:=v^{kl}_{A_m}$, we have that
\begin{align*}
\|v^{kl}_m - v^{kl}_m(0)\|_{C^{2,\alpha}(\R^n)}\leq C(1+\|A_m\|_{C^{0,\alpha}(\R^n)})
\end{align*}
for some constant $C>0$, uniformly in $m\in \N$. It follows that $(v^{kl}_m - v^{kl}_m(0))_{m\in\N}$ is uniformly bounded in $C^{2,\alpha}(\R^n)$. Further, noting that since $\int_Y v^{kl}_m = 0$ for all $m\in\N$, we have that $\lvert v_m^{kl}(0)\rvert = \left\lvert \int_Y (v_m^{kl}(0) - v_m^{kl})\right\rvert$ is uniformly bounded, and hence, $(v^{kl}_m)_{m\in\N}$ is uniformly bounded in $C^{2,\alpha}(\R^n)$. It is now quickly seen that $v^{kl}_m\rightarrow v^{kl}$ in $C^2(Y)$, where $v^{kl}$ is the solution to the (k,l)-th cell problem corresponding to $A$. We conclude that 
\begin{align*}
c_j^{kl}(A_m) = \int_Y r_m A_m e_j \cdot \nabla v^{kl}_m \longrightarrow \int_Y r A e_j \cdot \nabla v^{kl} =  c_j^{kl}(A)\quad\text{as}\quad m\rightarrow \infty,
\end{align*}
which is what we needed to show.
\end{proof}

\begin{proof}[Proof of denseness in Theorem \ref{Thm: density}]

Let $n\geq 2$ and $\alpha \in (0,1)$. We need to show that the set 
\begin{align*}
\{A\in C^{0,\alpha}(\T^n;\calS^n_+):A\text{ is type-$\eps$}\}
\end{align*}
is dense in $C^{0,\alpha}(\T^n;\calS^n_+)$. To this end, let $A^0\in C^{0,\alpha}(\T^n;\calS^n_+)$ be a type-$\eps^2$ map with invariant measure $r^0\in C^{0,\alpha}(\T^n;(0,\infty))$, and let $\delta > 0$. We need to show that there exists a type-$\eps$ map $A\in C^{0,\alpha}(\T^n;\calS^n_+)$ such that $\|A-A^0\|_{C^{0,\alpha}(\R^n)}\leq \delta$.

\medskip
\noindent \textbf{Step 1:} We suppose that 
\begin{align}\label{Pf of density: Ass Step1}
\int_Y r^0 A^0 e_1\cdot \nabla \fhi = 0\qquad \forall \fhi \in C^{\infty}(\T^n).
\end{align}
If this is not the case, skip Step 1 and set $A^1 := A^0$. Assuming \eqref{Pf of density: Ass Step1} holds, we are going to construct a map $A^1\in C^{0,\alpha}(\T^n;\calS^n_+)$ such that $\|A^1 - A^0\|_{C^{0,\alpha}(\R^n)}\leq \frac{\delta}{2}$ and such that there exists a function $q\in C^{\infty}(\T^n)$ with $\int_Y q = 0$ for which
\begin{align}\label{Pf of density: result Step1}
\int_Y r^1A^1 e_1\cdot \nabla q \neq 0,
\end{align}
where $r^1$ denotes the invariant measure of $A^1$. Let us introduce the map
\begin{align*}
P:= \frac{1}{r^0}Z\in C^{0,\alpha}(\T^n;\R^{n\times n}_{\mathrm{sym}}),
\end{align*}
where $Z\in C^{\infty}(\T^n;\R^{n\times n}_{\mathrm{sym}})$ is defined by
\begin{align*}
Z(y) = \zeta(y_1+y_2)\,\mathrm{diag}(1,-1,0,\dots,0)\quad\text{for}\quad y=(y_1,\dots,y_n)\in \R^n
\end{align*}
and $\zeta\in C^{\infty}(\T)$ is chosen such that $\zeta'\not\equiv 0$ and $\|P\|_{C^{0,\alpha}(\R^n)}\leq \frac{1}{2}$. We then define 
\begin{align*}
A^1:= A^0 + \delta P
\end{align*}
where we assume that $\delta$ is sufficiently small so that $A^1\in C^{0,\alpha}(\T^n;\calS^n_+)$. Note that $\|A^1 - A^0\|_{C^{0,\alpha}(\R^n)}\leq \frac{\delta}{2}$. We claim that the invariant measure $r^1\in C^{0,\alpha}(\T^n;(0,\infty))$ of the map $A^1$ is given by $r^1 = r^0$. Indeed, using that $r^0$ is the invariant measure of $A^0$, we have that
\begin{align*}
\int_Y r^0 (-A^1:D^2\fhi) = -\delta\int_Y r^0 P:D^2\fhi = -\delta\int_Y Z:D^2\fhi = -\delta\int_Y (D^2:Z)\fhi = 0
\end{align*}
for any $ \fhi\in C^{\infty}(\T^n)$. We now let $q\in C^{\infty}(\T^n)$ be such that $\int_Y q = 0$ and
\begin{align*}
\int_Y Z e_1\cdot \nabla q  \neq 0,
\end{align*}
which exists by the assumptions made on $\zeta$. Then, using $r^1=r^0$ and \eqref{Pf of density: Ass Step1}, we find that 
\begin{align*}
\int_Y r^1 A^1 e_1\cdot \nabla q = \delta\int_Y r^0  P e_1\cdot \nabla q =  \delta\int_Y Z e_1\cdot \nabla q \neq 0,
\end{align*}
i.e., \eqref{Pf of density: result Step1} holds.

\medskip
\noindent\textbf{Step 2:} We construct a type-$\eps$ map $A\in C^{0,\alpha}(\T^n;\calS^n_+)$ such that $\|A-A^1\|_{C^{0,\alpha}(\R^n)}\leq \frac{\delta}{2}$. Let us suppose that 
\begin{align}\label{Pf of density: ass Step2}
c_1^{11}(A^1) = \int_Y r^1 A^1 e_1\cdot \nabla v^{11}_{A^1} = 0,
\end{align}
as otherwise $A^1$ is type-$\eps$, in which case we set $A:=A^1$. Here, $v^{11}_{A^1}\in C^{2,\alpha}(\T^n)$ denotes the solution to the (1,1)-th cell problem corresponding to $A^1$. 

We set $\phi:= s q\in C^{\infty}(\T^n)$, where $q\in C^{\infty}(\T^n)$ is the function from Step 1 which satisfies $\int_Y q = 0$ and \eqref{Pf of density: result Step1}, and $s>0$ is chosen sufficiently small such that 
\begin{align*}
\gamma:=\frac{1}{1+A^1:D^2 \phi}\in C^{0,\alpha}(\T^n;(0,\infty))
\end{align*} 
and such that for
\begin{align*}
A:=\gamma A^1\in C^{0,\alpha}(\T^n;\calS^n_+)
\end{align*}
there holds $\|A-A^1\|_{C^{0,\alpha}(\R^n)}\leq \frac{\delta}{2}$. By Lemma \ref{Lmm: aB}, we have that
\begin{align}\label{Pf of density: cfromlmm}
c_1^{11}(A) = \bar{\gamma}\left( c_1^{11}(A^1) + \bar{a}_{11}^1 \int_Y r^1 A^1 e_1\cdot \nabla w\right),
\end{align}
where $w\in C^{2,\alpha}(\T^n)$ denotes the unique solution to
\begin{align}\label{Pf of density: wfromlmm}
-A:D^2 w = \gamma-\bar{\gamma}\quad\text{in }Y,\qquad w \text{ is }Y\text{-periodic},\qquad \int_Y w = 0,
\end{align}
and $\bar{a}_{11}^1,\bar{\gamma}>0$ denote the positive constants $\bar{a}_{11}^1:= \int_Y r^1 a^1_{11}$ and 
\begin{align}\label{Pf of density: gammabar=1}
\bar{\gamma}:= \left(\int_Y \frac{r^1}{\gamma}\right)^{-1} = \left(\int_Y (r^1+r^1 A^1:D^2\phi)\right)^{-1} = 1,
\end{align}
where we have used that $r^1$ is the invariant measure to $A^1$. We observe that the solution to \eqref{Pf of density: wfromlmm} is given by
\begin{align}\label{Pf of density: w=phi}
w = \phi = sq.
\end{align}
Indeed, $\phi\in C^{\infty}(\T^n)$, $\int_Y \phi = 0$, and we have 
\begin{align*}
-A:D^2 \phi = -\frac{A^1 :D^2 \phi}{1+A^1:D^2\phi} = \gamma - 1 = \gamma - \bar{\gamma}.
\end{align*}
We conclude from \eqref{Pf of density: cfromlmm} together with \eqref{Pf of density: ass Step2}, \eqref{Pf of density: gammabar=1} and \eqref{Pf of density: w=phi} that
\begin{align*}
c_1^{11}(A) = \bar{\gamma}\left( c_1^{11}(A^1) + \bar{a}_{11}^1 \int_Y r^1 A^1 e_1\cdot \nabla w\right) = s\,\bar{a}_{11}^1 \int_Y r^1 A^1 e_1\cdot \nabla q \neq 0,
\end{align*}
where we have used in the final step that \eqref{Pf of density: result Step1} holds. Therefore, $A$ is type-$\eps$.
\end{proof}

\section{Further analysis on type-$\eps^2$ and type-$\eps$ diffusion matrices} \label{sec:further}

In this section, we conduct a further systematic study on type-$\eps^2$ and type-$\eps$ diffusion matrices. 

\subsection{Perturbation arguments to create type-$\eps$ maps}\label{Subsec: Pertarg}
First, to demonstrate the perturbation argument from the proof of Theorem \ref{Thm: density} in a more explicit way, we consider a type-$\eps^2$ diagonal diffusion matrix with constant trace in dimension $n=2$ and prove the following result. 

\begin{lemma}[type-$\eps$ map via perturbation of type-$\eps^2$ map]\label{Thm: explicit perturbation}
Let $a\in C^{2,\alpha}(\T^2;(-1,1))$ for some $\alpha \in (0,1)$ and let $A\in C^{2,\alpha}(\T^2;\calS^2_+)$ be the type-$\eps^2$ map given by
\begin{align*}
A(y):=\mathrm{diag}(1+a(y),1-a(y))\quad\text{for}\quad y\in \R^2.
\end{align*}
Suppose that $\partial_{1} [r(1+a)] \not\equiv 0$, where $r\in C^{2,\alpha}(\T^2;(0,\infty))$ denotes the invariant measure to $A$. Let $w\in C^{4,\alpha}(\T^2)$ be the solution to
\begin{align}\label{wprob Lmm3.1}
-\Delta w = ra - \int_Y ra\quad\text{in }Y,\qquad w\text{ is $Y$-periodic},\qquad \int_{Y} w = 0,
\end{align}
and let $\phi:= s\, \partial_{1} w$ with $s>0$ chosen such that $\gamma:= \frac{1}{1+A:D^2 \phi} \in C^{0,\alpha}(\T^2;(0,\infty))$. Then, $\tilde{A}:=\gamma A$ is type-$\eps$.
\end{lemma}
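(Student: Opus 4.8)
The plan is to show $c_1^{11}(\tilde A)\neq 0$; since $C_{111}(\tilde A)=3\,c_1^{11}(\tilde A)$, this already forces $\tilde A$ to be type-$\eps$ in the sense of Definition \ref{Def: cg cb}. The quantity $c_1^{11}(\tilde A)$ will be computed from Lemma \ref{Lmm: aB}(iii), applied with its ``$B$'' being our $A$ and its scalar factor ``$a$'' being $\gamma$, so that its ``$A$'' is $\tilde A=\gamma A$.

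First I would collect the structural facts that follow from the special form $A=I_2+aM$ with $M=\mathrm{diag}(1,-1)$. By Theorem \ref{Thm: Main}, $c_j^{kl}(A)=0$ for all $j,k,l$; moreover, inspecting its proof, the function $w$ of \eqref{wprob Lmm3.1} is precisely the potential ``$w$'' appearing there (it solves $-A:D^2w=a-\int_Y ra$, equivalently $-\Delta w=ra-\int_Y ra$) and the invariant measure is $r=1+\partial_{11}^2 w-\partial_{22}^2 w$. Combining these two identities — this is the computation carried out in Step~2 of the proof of Theorem \ref{Thm: Complete char}, with $b,\bar b,w_B$ there replaced by $a,\int_Y ra,w$ — gives
\begin{align*}
r(1+a)=r\,a_{11}=\Big(1+\int_Y ra\Big)-2\,\partial_{22}^2 w,
\end{align*}
and hence $\bar a_{11}:=\int_Y r\,a_{11}=1+\int_Y ra>0$.

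Next I would run Lemma \ref{Lmm: aB}(iii) for $\tilde A=\gamma A$. Since $\gamma=(1+A:D^2\phi)^{-1}$ and $r$ is the invariant measure of $A$, one has $\int_Y r/\gamma=\int_Y r(1+A:D^2\phi)=1$ (the cross term vanishes by \eqref{r problem} and integration by parts, using $rA\in C^{2,\alpha}$), so the relevant constant from Lemma \ref{Lmm: aB} — call it $\bar\gamma$ — equals $1$; and the auxiliary function of that lemma, the zero-mean solution of $-\tilde A:D^2(\cdot)=\gamma-\bar\gamma$, is exactly $\phi=s\,\partial_1 w$, because $-\tilde A:D^2\phi=-\gamma\,A:D^2\phi=\gamma-1$ and $\int_Y\phi=0$. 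Feeding $c_1^{11}(A)=0$, $\bar\gamma=1$ and $Ae_1=(1+a)e_1$ into Lemma \ref{Lmm: aB}(iii), then substituting the formula for $r(1+a)$ and integrating by parts twice (using $\int_Y\partial_{11}^2 w=0$ and $\int_Y\partial_{22}^2 w\,\partial_{11}^2 w=\int_Y(\partial_{12}^2 w)^2$), I obtain
\begin{align*}
c_1^{11}(\tilde A)=\bar a_{11}\int_Y r\,Ae_1\cdot\nabla\phi=s\,\bar a_{11}\int_Y r(1+a)\,\partial_{11}^2 w=-2s\,\bar a_{11}\int_Y(\partial_{12}^2 w)^2.
\end{align*}

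Finally, since $\bar a_{11}>0$ and $s>0$, it suffices to check $\partial_{12}^2 w\not\equiv 0$. Differentiating $r(1+a)=(1+\int_Y ra)-2\,\partial_{22}^2 w$ gives $\partial_1[r(1+a)]=-2\,\partial_2(\partial_{12}^2 w)$, so $\partial_{12}^2 w\equiv 0$ would force $\partial_1[r(1+a)]\equiv 0$, contradicting the hypothesis. Hence $\int_Y(\partial_{12}^2 w)^2>0$, so $c_1^{11}(\tilde A)\neq 0$ and $\tilde A$ is type-$\eps$. I do not expect any genuine obstacle; the only care needed is bookkeeping — correctly matching the notation of Lemma \ref{Lmm: aB} (in particular which ``$\bar a$'' is which), identifying the $w$ of \eqref{wprob Lmm3.1} with the potential from the proof of Theorem \ref{Thm: Main}, and justifying the integrations by parts, all of which are afforded by the $C^{2,\alpha}$/$C^{4,\alpha}$ regularity assumed.
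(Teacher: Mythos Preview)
Your proof is correct and follows essentially the same route as the paper: identify the $w$ of \eqref{wprob Lmm3.1} with the potential from the proof of Theorem~\ref{Thm: Main} to obtain $r=1+\partial_{11}^2 w-\partial_{22}^2 w$ and $r(1+a)=(1+\bar a)-2\,\partial_{22}^2 w$; apply Lemma~\ref{Lmm: aB}(iii) with $\bar\gamma=1$ and auxiliary function $\phi=s\,\partial_1 w$; and reduce $c_1^{11}(\tilde A)$ to $-2s\,\bar a_{11}\int_Y(\partial_{12}^2 w)^2$, which is nonzero by the hypothesis $\partial_1[r(1+a)]\not\equiv 0$. The only cosmetic difference is that the paper integrates by parts once to reach $\int_Y(\partial_1[r(1+a)])(\partial_1 w)$ before substituting, whereas you substitute $r(1+a)$ first and then integrate by parts --- the computations are equivalent.
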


\begin{proof}
First, we note that $A$ is indeed type-$\eps^2$ by Theorem \ref{Thm: Main}. 
Let us write
\begin{align}\label{Pf of Thm 1.4: smallabar}
\bar{a}:=\int_Y ra,
\end{align}
let $w\in C^{4,\alpha}(\T^2)$ be the unique solution to \eqref{wprob Lmm3.1}, and let $\phi:= s\, \partial_{1} w$ with $s>0$ chosen such that 
\begin{align*}
\gamma:= \frac{1}{1+A:D^2 \phi} \in C^{0,\alpha}(\T^2;(0,\infty)).
\end{align*}
We need to show that 
\begin{align*}
\tilde{A}:=\gamma A\in C^{0,\alpha}(\T^2;\calS^2_+)
\end{align*}
is type-$\eps$.

\medskip
\noindent \textbf{Step 1:} We claim that the invariant measure $r$ of $A$ and the solution $v^{kl}\in C^{4,\alpha}(\T^2)$ to the (k,l)-th cell problem \eqref{vij problem} for $k,l\in\{1,2\}$ are given by
\begin{align}\label{Pf of Thm 1.4: Step 1 goal}
r = 1 + \partial_{11}^2 w - \partial_{22}^2 w,\qquad v^{11} = - v^{22} =  w,\quad v^{12}=v^{21}\equiv 0.
\end{align}
To this end, let us start by noting that $A$ is of the form $A(y) = C + a(y) M$ for $y\in \R^2$ with $C:=I_2\in \calS^2_+$ and $M:=\mathrm{diag}(1,-1)\in \R^{2\times 2}_{\mathrm{sym}}$. We then know from the proof of Theorem \ref{Thm: Main} (see \eqref{Pf Thm 1.1: r=xi} and \eqref{Pf of Thm 1.1: vkl=etakl}) that
\begin{align}\label{Pf of Thm 1.4: r,vkl in terms of what}
r = 1+M:D^2 \hat{w},\qquad v^{kl} = m_{kl} \hat{w}
\end{align}
for $k,l\in\{1,2\}$, where $\hat{w}\in C^{4,\alpha}(\T^2)$ is the unique solution to (compare with \eqref{Pf Thm 1.1: w})
\begin{align*}
-A:D^2 \hat{w} = a-\bar{a}  \quad\text{in }Y,\qquad \hat{w} \text{ is }Y\text{-periodic},\qquad \int_Y \hat{w} = 0.
\end{align*} 
Note that, by \eqref{Pf of Thm 1.1: -C:D^2w} with $C=I_2$, we have that $\hat{w}$ satisfies
\begin{align*}
-\Delta \hat{w} = ra-\bar{a}\quad\text{in }Y,\qquad \hat{w} \text{ is }Y\text{-periodic},\qquad \int_Y \hat{w} = 0,
\end{align*} 
and thus, in view of \eqref{wprob Lmm3.1}, \eqref{Pf of Thm 1.4: smallabar} and using uniqueness of solutions, we have $\hat{w}=w$. Using this, we see that \eqref{Pf of Thm 1.4: Step 1 goal} follows from \eqref{Pf of Thm 1.4: r,vkl in terms of what}.

\medskip
\noindent \textbf{Step 2:} We apply Lemma \ref{Lmm: aB} to find that
\begin{align*}
c_1^{11}(\tilde{A}) = \bar{\gamma}\left( c_1^{11}(A) + \bar{a}_{11} \int_Y r A e_1\cdot \nabla w_{\gamma}\right),
\end{align*}
where $w_{\gamma}\in C^{2,\alpha}(\T^2)$ denotes the unique solution to
\begin{align}\label{Pf of Thm 1.4: wfromlmm}
-\tilde{A}:D^2 w_{\gamma} = \gamma-\bar{\gamma}\quad\text{in }Y,\qquad w_{\gamma} \text{ is }Y\text{-periodic},\qquad \int_Y w_{\gamma} = 0,
\end{align}
and $\bar{\gamma}>0$ is given by 
\begin{align}\label{Pf of Thm 1.4: gammabar=1}
\bar{\gamma}:= \left(\int_Y \frac{r}{\gamma}\right)^{-1} = \left(\int_Y (r+r A:D^2\phi)\right)^{-1} = 1,
\end{align}
where we have used that $r$ is the invariant measure to $A$. Using that $c_1^{11}(A) = 0$ since $A$ is type-$\eps^2$, \eqref{Pf of Thm 1.4: gammabar=1} and $\bar{a}_{11} = 1+\bar{a}$, we find that
\begin{align}\label{Pf of Thm 1.4: st2res}
c_1^{11}(\tilde{A}) =(1+\bar{a}) \int_Y r A e_1\cdot \nabla w_{\gamma} = (1+\bar{a}) \int_Y r (1+a) \partial_1  w_{\gamma}.
\end{align}
\textbf{Step 3:} We claim that the solution $w_{\gamma}$ of \eqref{Pf of Thm 1.4: wfromlmm} is given by
\begin{align}\label{Pf of Thm 1.4: st3res}
w_{\gamma} = \phi = s\,\partial_1 w.
\end{align}
Indeed, $\phi$ satisfies $\phi \in C^{2,\alpha}(\T^2)$, $\int_Y \phi = 0$, and we have that
\begin{align*}
-\tilde{A}:D^2 \phi = -\frac{A :D^2 \phi}{1+A:D^2\phi} = \gamma - 1 = \gamma - \bar{\gamma}.
\end{align*}
It follows that $w_{\gamma} = \phi$.

\medskip
\noindent \textbf{Step 4:} From the results \eqref{Pf of Thm 1.4: st2res} and \eqref{Pf of Thm 1.4: st3res} from Steps 2 and 3, we deduce that
\begin{align*}
c_1^{11}(\tilde{A}) = s(1+\bar{a}) \int_Y r (1+a) \partial_{11}^2  w = -s(1+\bar{a}) \int_Y (\partial_1[r (1+a)])(\partial_1 w).
\end{align*}
Noting that by \eqref{wprob Lmm3.1} and \eqref{Pf of Thm 1.4: Step 1 goal} we have
\begin{align}\label{Pf of Thm 1.4 d1(r(1+a))}
\partial_1[r(1+a)] = \partial_1[ 1 + \partial_{11}^2 w -   \partial_{22}^2 w - \Delta w + \bar{a} ] = -2\, \partial_{122}^3 w,
\end{align}
we deduce that there holds
\begin{align}\label{Pf of Thm 1.4: c111Ati 1}
c_1^{11}(\tilde{A}) = 2s(1+\bar{a})\int_Y (\partial_{122}^3 w)(\partial_1 w).
\end{align} 
Finally, we observe that 
\begin{align}\label{Pf of Thm 1.4: c111Ati 2}
\int_Y (\partial_{122}^3 w)(\partial_1 w) = -\int_Y (\partial_{12}^2 w)^2 \neq 0
\end{align}
which holds as $\partial_{12}^2 w\not\equiv 0$ since $\frac{\mathrm{d}}{\mathrm{d}y_2}[ \partial_{12}^2 w ] = -\frac{1}{2}\partial_1[r(1+a)] \not\equiv 0$ by \eqref{Pf of Thm 1.4 d1(r(1+a))}. Combining \eqref{Pf of Thm 1.4: c111Ati 1} and 
\eqref{Pf of Thm 1.4: c111Ati 2}, and using that $s>0$ and $1+\bar{a}>0$, we find that  
\begin{align*}
c_1^{11}(\tilde{A}) \neq 0,
\end{align*}
and hence, $\tilde{A}$ is type-$\eps$.
\end{proof}

Let us point out that Lemma \ref{Thm: explicit perturbation} can also be proved quickly using Theorem \ref{Thm: Complete char}.

\subsection{Classification of $A$ for which $r$ and $rA$ are of special structure}\label{Subsec: 3.2}

The following result enables us to classify a diagonal map $A\in C^{0,\alpha}(\T^2;\calS^2_+)$ as type-$\eps^2$ or type-$\eps$ when the map $rA$ has constant trace and the invariant measure $r$ has a special structure.  
\begin{lemma}[Classification when $r,rA$ are of special structure in $n=2$]\label{Thm: r,a special}
Let $A\in C^{0,\alpha}(\T^2;\calS^2_+)$ for some $\alpha \in (0,1)$ with invariant measure $r\in C^{0,\alpha}(\T^2;(0,\infty))$. Suppose that for some $r_1,r_2\in C^{0,\alpha}(\T)$ we have 
\begin{align*}
r(y_1,y_2) = r_1(y_1+y_2) + r_2(y_1 - y_2)
\end{align*}
for any $(y_1,y_2)\in \R^2$, and that for some $c>0$ and $a\in C^{0,\alpha}(\T^2;(0,c))$  we have 
\begin{align*}
A(y) = \frac{1}{r(y)}\,\mathrm{diag}(a(y),c-a(y))
\end{align*}
for any $y\in \R^2$. Then, $A$ is type-$\eps^2$ if and only if
\begin{align*}
\int_0^1 \int_0^1 a(y_1,y_2) R_1'(y_1+y_2)\,\mathrm{d}y_1\,\mathrm{d}y_2 = \int_0^1 \int_0^1 a(y_1,y_2) R_2'(y_1-y_2)\,\mathrm{d}y_1\,\mathrm{d}y_2 = 0,
\end{align*}
where $R_i\in C^{2,\alpha}(\T)$ is such that $R_i'' = r_i - \int_{(0,1)} r_i$ and $\int_{(0,1)} R_i = 0$ for $i\in \{1,2\}$.
\end{lemma}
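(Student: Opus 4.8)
The plan is to apply Theorem~\ref{Thm: Complete char} by writing $A$ in the form $A = \alpha B$ with $B = \mathrm{diag}(1+b, 1-b)$ for appropriate $\alpha \in C^{0,\alpha}(\T^2;(0,\infty))$ and $b \in C^{0,\alpha}(\T^2;(-1,1))$, and then identifying the auxiliary functions $w_A$ and $w_B$ appearing in \eqref{eq:wawb} explicitly in terms of $r_1, r_2, a$. Starting from $rA = \mathrm{diag}(a, c-a)$, one reads off $\tr(rA) \equiv c$, so setting $\alpha := \frac{c}{2r}$ and $b := \frac{2a-c}{c}$ gives $A = \alpha B$ with $\alpha B = \frac{1}{2r}\,\mathrm{diag}(2a, 2(c-a)) \cdot \frac{1}{\alpha}\cdot\alpha$; more precisely $\alpha(1+b) = \frac{a}{r}$ and $\alpha(1-b) = \frac{c-a}{r}$, which matches. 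Since $rA$ has constant trace, the function ``$w_A$" associated to the decomposition (the solution of $-A:D^2 w_A = \alpha - \int_Y r\alpha$) is NOT automatically zero here because the ``$a$" in Theorem~\ref{Thm: Complete char} is $\alpha = \frac{c}{2r}$, which is not constant. So I must compute $w_A$ and $w_B$.

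The key computation is Step~1: identify $w_B$ and $w_A$. For $w_B$, by the characterization \eqref{Pf of Thm compl char: Lapw} established inside the proof of Theorem~\ref{Thm: Complete char}, $w_B$ solves $-\Delta w_B = r_B b - \bar b$ where $r_B$ is the invariant measure of $B$. But by Lemma~\ref{Lmm: aB}(i) applied to $A = \alpha B$, we have $r = \bar\alpha\, r_B / \alpha$, i.e. $r_B = \frac{\alpha}{\bar\alpha} r = \frac{c}{2\bar\alpha}$, a constant; since $\int_Y r_B = 1$ this forces $r_B \equiv 1$. Hence $w_B$ solves $-\Delta w_B = b - \bar b = \frac{2a-c}{c} - \bar b$, so up to the constant factor $\frac{2}{c}$, $w_B$ is determined by $-\Delta w_B = \frac{2}{c}(a - \int_Y a)$ — wait, I must be careful: with $r_B\equiv 1$, $\bar b = \int_Y b$. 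For $w_A$, it solves $-A:D^2 w_A = \alpha - \bar\alpha$ where $\bar\alpha = \int_Y r\alpha$; by \eqref{Pf of Thm 1.1: -C:D^2w}-type reasoning (or directly using $A = \alpha B$ and Lemma~\ref{Lmm: aB}), and since $\alpha = \frac{c}{2r}$, one gets $r\alpha = \frac{c}{2}$ constant, so $\bar\alpha = \frac{c}{2}$ and $\alpha - \bar\alpha = \frac{c}{2}(\frac1r - 1)$. Now I use the given structural hypothesis: $r = r_1(y_1+y_2) + r_2(y_1-y_2)$. In the rotated coordinates $\sigma = y_1+y_2$, $\tau = y_1 - y_2$, the Laplacian becomes $\Delta = 2(\partial_\sigma^2 + \partial_\tau^2)$ and $r$ splits, so I expect $w_A$ (or a related potential) to be expressible via $R_1, R_2$ with $R_i'' = r_i - \int r_i$. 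The cross-derivative terms $\partial_1 w_A \, \partial_{22}^2 w_B$ and $\partial_2 w_A\, \partial_{11}^2 w_B$ in \eqref{eq:wawb}, after changing variables and integrating by parts, should collapse to $\int a\, R_1'(y_1+y_2)$ and $\int a\, R_2'(y_1-y_2)$.

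The main obstacle I anticipate is Step~1's identification of $w_A$: unlike $w_B$, the equation for $w_A$ genuinely involves $A$ (not $\Delta$), so I cannot solve it by inverting a constant-coefficient operator directly. The trick will be to use the structure $A = \frac{1}{r}\mathrm{diag}(a, c-a)$ together with the divergence-type identity $D^2:(rA) = 0$: writing $rA = \mathrm{diag}(a, c-a)$, the relation $-A:D^2 w_A = \alpha - \bar\alpha$ is equivalent to $-rA : D^2 w_A = r\alpha - r\bar\alpha = \frac{c}{2} - \frac{c}{2}r$, i.e. $-\mathrm{diag}(a,c-a):D^2 w_A = \frac{c}{2}(1-r)$, that is $-a\,\partial_{11}^2 w_A - (c-a)\partial_{22}^2 w_A = \frac{c}{2}(1-r)$. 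To solve this I will look for $w_A$ of the form $w_A = P_1(y_1+y_2) + P_2(y_1-y_2) + (\text{correction})$ and verify, using $R_i'' = r_i - \int r_i$, that a suitable combination works — or, more cleanly, observe that since $r = r_1(\sigma)+r_2(\tau)$ with $\sigma,\tau$ the rotated variables, and $\partial_{11}^2 + \partial_{22}^2$ in these variables is $2(\partial_\sigma^2+\partial_\tau^2)$ while $\partial_{11}^2 - \partial_{22}^2 = 2\partial_{\sigma\tau}^2$, one can build $w_A$ additively. Once $w_A$ and $w_B$ are in hand, the final step is a routine change of variables and integration by parts to convert \eqref{eq:wawb} into the stated integral conditions on $a, R_1', R_2'$; I'd write $\partial_1 = \partial_\sigma + \partial_\tau$, $\partial_2 = \partial_\sigma - \partial_\tau$, substitute, and use periodicity to discard exact-derivative terms.
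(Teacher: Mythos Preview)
Your proposal is correct and follows essentially the same route as the paper: the same decomposition $A=\tfrac{c}{2r}B$ with $b=\tfrac{2a-c}{c}$, the observation $r_B\equiv 1$ (whence $\partial_{11}^2 w_B=\partial_{22}^2 w_B=-\tfrac1c(a-\int_Y a)$), and the identification of $w_A$ via the rotated variables. The ``correction'' you anticipate for $w_A$ is in fact zero: since any function of the form $P_1(y_1+y_2)+P_2(y_1-y_2)$ satisfies $\partial_{11}^2=\partial_{22}^2$, the $a$-dependence drops out of $-a\,\partial_{11}^2 w_A-(c-a)\,\partial_{22}^2 w_A$ and one checks directly that $w_A=\tfrac12\bigl(R_1(y_1+y_2)+R_2(y_1-y_2)\bigr)$ solves the equation, exactly as the paper does.
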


\begin{proof}
First, let us note that with $b:=\frac{2}{c}a-1\in C^{0,\alpha}(\T^2;(-1,1))$ we have that
\begin{align*}
A(y) = \frac{c}{2}\frac{1}{r(y)}B(y),\qquad B(y):= \mathrm{diag}(1+b(y),1-b(y))
\end{align*}
for any $y\in \R^2$. We observe that the invariant measure $r_B$ of $B=\frac{2}{c}(rA)$ is given by $r_B\equiv 1$ since $r$ is the invariant measure of $A$.

\medskip
\noindent \textbf{Step 1:} By Theorem \ref{Thm: Complete char}, the map $A$ is type-$\eps^2$ if and only if
\begin{align}\label{Pf Lemma3.2: to show}
\int_Y (\partial_1 w_A)(\partial_{22}^2 w_B) = \int_Y (\partial_2 w_A)(\partial_{11}^2 w_B) = 0,
\end{align}
where $w_A\in C^{2,\alpha}(\T^2)$ denotes the unique solution to
\begin{align}\label{Pf Lemma3.2: wA}
-A:D^2 w_A = \frac{c}{2}\left(\frac{1}{r}-1\right) \quad\text{in }Y,\qquad w_A \text{ is }Y\text{-periodic},\qquad \int_Y w_A = 0,
\end{align}
and $w_B\in C^{2,\alpha}(\T^2)$ denotes the unique solution to
\begin{align*}
-\Delta w_B = b - \int_Y b = \frac{2}{c}\left(a-\int_Y a\right)\quad\text{in }Y,\qquad w_B \text{ is }Y\text{-periodic},\qquad \int_Y w_B = 0.
\end{align*}
Further, in view of the proof of Theorem \ref{Thm: Complete char} (see \eqref{Pf of Thm compl char: rB}), the function $w_B$ satisfies $\partial_{11}^2 w_B - \partial_{22}^2 w_B = r_B - 1 \equiv 0$ and hence, we have that
\begin{align}\label{Pf Lemma 3.2: result St1}
\partial_{11}^2 w_B = \partial_{22}^2 w_B = \frac{1}{2}\Delta w_B = -\frac{1}{c}\left(a-\int_Y a\right).
\end{align}

\medskip
\noindent \textbf{Step 2:} We claim that the solution $w_A$ to \eqref{Pf Lemma3.2: wA} is given by
\begin{align}\label{Pf Lemma3.2: result St2}
w_A(y_1,y_2) = \frac{1}{2}\left(R_1(y_1+y_2) + R_2(y_1-y_2)\right)
\end{align}
for any $(y_1,y_2)\in \R^2$. To this end, note that $w_A\in C^{2,\alpha}(\T^2)$ and $\int_Y w_A = 0$. Further, noting that $\partial_{11}^2 w_A = \partial_{22}^2 w_A = \frac{1}{2}(r-1)$, we have 
\begin{align*}
-A:D^2 w_A = -\frac{c}{2r}B:D^2 w_A = -\frac{c}{r}\partial_{11}^2 w_A = -\frac{c}{2r}(r - 1) = \frac{c}{2}\left(\frac{1}{r}-1\right).
\end{align*}

\medskip
\noindent \textbf{Step 3:} Let us define the values $Q_1,Q_2\in \R$ by
\begin{align}\label{Q1,Q2}
\begin{split}
Q_1&:=\int_0^1 \int_0^1 a(y_1,y_2) R_1'(y_1+y_2)\mathrm{d}y_1\mathrm{d}y_2,\\ Q_2&:=\int_0^1 \int_0^1 a(y_1,y_2) R_2'(y_1-y_2)\mathrm{d}y_1\mathrm{d}y_2.
\end{split}
\end{align}
In view of \eqref{Pf Lemma3.2: to show}, and using the results \eqref{Pf Lemma 3.2: result St1} and \eqref{Pf Lemma3.2: result St2} from the previous steps, we have that $A$ is type-$\eps^2$ if and only if
\begin{align*}
\int_Y (\partial_1 w_A)(\partial_{22}^2 w_B) = -\frac{1}{2c}(Q_1+Q_2) = 0,\quad
 \int_Y (\partial_2 w_A)(\partial_{11}^2 w_B)&= \frac{1}{2c}(Q_2-Q_1) = 0. 
\end{align*}
We conclude that $A$ is type-$\eps^2$ if and only if $Q_1=Q_2=0$, which is what we needed to show. Finally, let us note that in view of Step 2 of the proof of Theorem \ref{Thm: Complete char}, we can explicitly compute that
\begin{align}\label{Pf Lemma3.2:cjkl}
\begin{split}
c_1^{11}(A) &= \frac{\int_Y a}{c-\int_Y a}c_1^{22}(A) =  \frac{\int_Y a}{c}(Q_1+Q_2),\\ c_2^{11}(A) &= \frac{\int_Y a}{c-\int_Y a}c_2^{22}(A) =  \frac{\int_Y a}{c}(Q_2-Q_1),
\end{split}
\end{align}
and that $c_j^{kl}(A) = 0$ when $k\neq l$.
\end{proof} 

\begin{remark}[Application of Lemma \ref{Thm: r,a special}]\label{Rk: Example recover ST}
Let us demonstrate an application of Lemma \ref{Thm: r,a special} by considering the map $A\in C^{\infty}(\mathbb{T}^2;\calS^2_+)$ defined in \eqref{cbad from ST}, which we can write as
\begin{align*}
A(y)=\frac{1}{r(y)}\, \mathrm{diag}\left(a(y),2-a(y)\right)\quad\text{for}\quad y\in \R^2,
\end{align*}
where $r,a:\R^2\rightarrow \R$ are defined as
\begin{align*}
r(y_1,y_2)&:= 1+\frac{1}{4}(\cos(2\pi y_1)-2\sin(2\pi y_1))\sin(2\pi y_2) = r_1(y_1+y_2) + r_2(y_1 - y_2),\\
a(y_1,y_2)&:= 1-\frac{1}{2}\sin(2\pi y_1)\sin(2\pi y_2) 
\end{align*}
for $(y_1,y_2)\in \R^2$, where $r_1,r_2\in C^{\infty}(\T)$ denote the functions given by
\begin{align*}
r_1(t):=1-r_2(t):= \frac{1}{8}\left(\sin(2\pi t)+2\cos(2\pi t)\right)\quad\text{for}\quad t\in \R.
\end{align*}
Note that $r$ is the invariant measure of $A$. The functions $R_i\in C^{\infty}(\T)$ satisfying $R_i'' = r_i - \int_{(0,1)} r_i$ and $\int_{(0,1)} R_i = 0$ for $i\in \{1,2\}$ are given by
\begin{align*}
R_1(t) := -R_2(t):=-\frac{1}{32\pi^2}\left(\sin(2\pi t)+2\cos(2\pi t) \right)\quad\text{for}\quad t\in \R.
\end{align*}
Computing the values $Q_1,Q_2$ defined in \eqref{Q1,Q2} yields $Q_1 = Q_2 = -\frac{1}{128\pi}$ and we deduce from Lemma \ref{Thm: r,a special} that $A$ is type-$\eps$. Moreover, in view of \eqref{Pf Lemma3.2:cjkl} and noting that in this situation we have $\int_Y a = 1$ and $c:=2$, we compute
\begin{align*}
c_1^{11}(A) = c_1^{22}(A) = -\frac{1}{128\pi},\qquad c_2^{11}(A) = c_2^{22}(A) = 0,
\end{align*}
and $c_j^{kl}(A) = 0$ when $k\neq l$. We have recovered the result from \cite[Theorem 1.4]{ST21}.
\end{remark}

\subsection{Diffusion matrices with constant trace in dimension $n=2$}\label{Subsec: 3.new}

In this subsection, we consider maps $A\in C^{0,\alpha}(\T^2;\calS_+^2)$ with constant trace. First, we observe that such a map $A$ is a type-$\eps^2$ diffusion matrix if and only if the third-order homogenized tensor $(c_j^{kl}(A))_{1\leq j,k,l\leq 2}$ vanishes.

\begin{lemma}\label{lem: trace 1 in 2D}
Let $A\in C^{0,\alpha}(\T^2;\calS_+^2)$ for some $\alpha \in (0,1)$, and suppose that $\tr(A) \equiv c$ for some constant $c>0$. Then, $A$ is type-$\eps^2$ if and only if $c_j^{kl}(A)=0$ for all $j,k,l\in\{1,2\}$. 
\end{lemma}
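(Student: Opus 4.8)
The plan is to use the constant-trace hypothesis to force a rigid relation between the correctors $v^{11}$ and $v^{22}$, which makes the symmetric part of the third-order tensor strong enough to recover the full tensor. The reverse implication (if $c_j^{kl}(A)=0$ for all $j,k,l$, then $A$ is type-$\eps^2$) is immediate from Definition \ref{Def: cg cb}, so the content is the forward direction.

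First I would record that $\tr(\bar A)=\int_Y r\,\tr(A)=c\int_Y r=c$, using $\int_Y r=1$. Adding the $(1,1)$-th and $(2,2)$-th cell problems \eqref{vij problem}, the function $v^{11}+v^{22}\in C^{2,\alpha}(\T^2)$ solves
\[
-A:D^2(v^{11}+v^{22}) = (a_{11}+a_{22})-(\bar a_{11}+\bar a_{22}) = c-\tr(\bar A) = 0 \quad\text{in }Y,
\]
is $Y$-periodic, and has zero mean, so by uniqueness $v^{11}+v^{22}\equiv 0$, i.e.\ $v^{22}=-v^{11}$. Substituting into the definition \eqref{eq:def-cjkl} yields $c_j^{22}(A)=-c_j^{11}(A)$ for $j\in\{1,2\}$; moreover $v^{12}=v^{21}$ since both solve the same cell problem ($a_{12}=a_{21}$, $\bar a_{12}=\bar a_{21}$), hence $c_j^{12}(A)=c_j^{21}(A)$.

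Next, assuming $A$ is type-$\eps^2$, I would evaluate $C_{jkl}(A)=c_j^{kl}(A)+c_k^{jl}(A)+c_l^{jk}(A)=0$ at four index triples and insert the relations just obtained. From $C_{111}(A)=3c_1^{11}(A)=0$ we get $c_1^{11}(A)=c_1^{22}(A)=0$, and from $C_{222}(A)=3c_2^{22}(A)=0$ we get $c_2^{22}(A)=c_2^{11}(A)=0$. Then $C_{112}(A)=2c_1^{12}(A)+c_2^{11}(A)=2c_1^{12}(A)=0$ gives $c_1^{12}(A)=c_1^{21}(A)=0$, and $C_{122}(A)=c_1^{22}(A)+2c_2^{12}(A)=2c_2^{12}(A)=0$ gives $c_2^{12}(A)=c_2^{21}(A)=0$. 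Thus every entry $c_j^{kl}(A)$ vanishes, which proves the claimed equivalence.

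There is no serious obstacle: the argument is short once one notices that in two dimensions a constant trace collapses the corrector pair $(v^{11},v^{22})$ to $(v^{11},-v^{11})$. The only thing to be careful about is the index bookkeeping in the symmetrization $C_{jkl}(A)$ and keeping track of which four triples suffice; the rest is elementary linear algebra in the four quantities $c_1^{11}(A),c_2^{11}(A),c_1^{12}(A),c_2^{12}(A)$.
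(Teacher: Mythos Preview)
Your proposal is correct and follows essentially the same approach as the paper: both hinge on the observation that constant trace forces $v^{11}+v^{22}\equiv 0$, hence $c_j^{11}(A)+c_j^{22}(A)=0$, after which the four symmetrization conditions $C_{111}=C_{222}=C_{112}=C_{122}=0$ pin down all eight entries $c_j^{kl}(A)$. Your write-up is slightly more explicit about the index bookkeeping (including the $v^{12}=v^{21}$ remark), but the structure and key idea are identical to the paper's proof.
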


\begin{proof}
Let $A\in C^{0,\alpha}(\T^2;\calS_+^2)$ with $\alpha \in (0,1)$ and suppose that $\tr(A) \equiv c$ for some constant $c>0$. We denote the invariant measure of $A$ by $r\in C^{0,\alpha}(\T^2;(0,\infty))$, the effective coefficient corresponding to $A$ by $\bar{A}:=\int_Y rA\in \calS^2_+$, and the solution to the (k,l)-th cell problem \eqref{vij problem} corresponding to $A$ by $v^{kl}\in C^{2,\alpha}(\T^2)$ for $k,l\in\{1,2\}$. Let us observe that 
\begin{align}\label{v11+v22 zero}
v^{11} + v^{22}\equiv 0.
\end{align}
Indeed, $v^{11} + v^{22}\in C^{2,\alpha}(\T^2)$, we have $\int_Y (v^{11}+v^{22}) = 0$, and there holds
\begin{align*}
-A:D^2 (v^{11}+v^{22}) = \left(a_{11} - \bar{a}_{11}\right)+\left(a_{22} - \bar{a}_{22}\right) = \mathrm{tr}(A) - \mathrm{tr}(\bar{A})\equiv 0,
\end{align*}
where we have used that $\mathrm{tr}(A)\equiv c$ and $\mathrm{tr}(\bar{A}) = \int_Y \mathrm{tr}(rA) = c$. It follows that \eqref{v11+v22 zero} holds, and we deduce from the definition of the values $c_j^{kl}(A)$ that
\begin{align}\label{eq:tr1-2}
c^{11}_j(A)+c^{22}_j(A) =   \int_Y rAe_j\cdot \nabla (v^{11}+v^{22}) = 0\qquad \forall j\in \{1,2\}.
\end{align}
Since by Definition \ref{Def: cg cb}, $A$ is type-$\eps^2$ if and only if
\begin{align*}
c_1^{11}(A)=c_2^{22}(A)=c_2^{11}(A)+2\,c_1^{12}(A)=c_1^{22}(A)+2\,c_2^{12}(A)=0,
\end{align*} 
we deduce from \eqref{eq:tr1-2} that $A$ is a type-$\eps^2$ if and only if $c_j^{kl}(A) = 0$ for all $j,k,l\in\{1,2\}$, which is what we needed to show.
\end{proof}

We are now in a position to show that a diagonal trace-one map $A\in C^{0,\alpha}(\T^2;\calS_+^2)$ (which, by Corollary \ref{Cor: Cor1}, is a type-$\eps^2$ diffusion matrix) is the limit of a sequence of trace-one type-$\eps$ diffusion matrices in $C^{0,\alpha}(\T^2;\calS_+^2)$.

\begin{lemma}\label{lem: c-bad trace 1 in 2D}
Let $A\in C^{0,\alpha}(\T^2;\calS_+^2)$ with $\alpha \in (0,1)$ be a diagonal map which satisfies $\tr(A) \equiv 1$.
Then, there exists a sequence of type-$\eps$ diffusion matrices $(A_m)_{m\in\N} \subset C^{0,\alpha}(\T^2;\calS_+^2)$ such that $\tr(A_m) \equiv 1$ and $\|A_m-A\|_{C^{0,\alpha}(\R^2)}\rightarrow 0$ as $m\rightarrow \infty$. 
\end{lemma}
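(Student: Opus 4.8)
The plan is to perturb the given diagonal trace-one map $A$ within the affine slice of trace-one diagonal maps, using a construction parallel to the proof of denseness in Theorem \ref{Thm: density} but adapted so that the trace is preserved. Write $A = \mathrm{diag}(a, 1-a)$ with $a \in C^{0,\alpha}(\T^2;(0,1))$; this is exactly of the form $C + aM$ with $C = \mathrm{diag}(0,1)$, $M = \mathrm{diag}(1,-1)$, so Theorem \ref{Thm: Main} applies and $A$ is type-$\eps^2$. Since trace-one maps in two dimensions are, by Lemma \ref{lem: trace 1 in 2D}, type-$\eps$ as soon as a single entry $c_j^{kl}(A)$ fails to vanish, it suffices to construct, for each $m$, a trace-one diagonal perturbation $A_m = \mathrm{diag}(a_m, 1-a_m)$ with $\|a_m - a\|_{C^{0,\alpha}} \to 0$ and, say, $c_1^{11}(A_m) \neq 0$.

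First I would invoke Theorem \ref{Thm: Complete char} with $a$ replaced by $\tfrac12\tr(A_m)\equiv\tfrac12$ (so the scalar factor is constant) and $b_m := 2a_m - 1$, i.e. $B_m = A_m$ up to the constant factor $\tfrac12$; the characterization then says $A_m$ is type-$\eps^2$ iff $\int_Y(\partial_1 w_{A_m})(\partial_{22}^2 w_{B_m}) = \int_Y(\partial_2 w_{A_m})(\partial_{11}^2 w_{B_m}) = 0$, but $w_{A_m}\equiv 0$ since $a_m$ is constant — which forces us to use the $c_j^{kl}$ formulas more directly rather than $w_A$. The cleaner route is the one used for Lemma \ref{Thm: explicit perturbation} / the proof of Theorem \ref{Thm: density}: pick a smooth $Y$-periodic mean-zero function $\phi_m = s_m q$ with $q$ chosen so that $\int_Y r\,(1+b)\,\partial_1 q$-type correlation is nonzero, set $\gamma_m := (1 + A_m':D^2\phi_m)^{-1}$ for a suitable reference map, and multiply. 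However, since we must stay inside the trace-one diagonal slice, the perturbation of $a$ itself should be taken so that the resulting corrector $w_{B_m}$ (solving $-\Delta w_{B_m} = r_{B_m} b_m - \overline{b_m}$, where here $r_{B_m}$ is the invariant measure of the unit-factor part) produces a nonvanishing integral. Concretely, I would take $a_m = a + \tfrac1m\,\psi$ for a fixed smooth mean-zero $\psi$ (with $\tfrac1m$ small enough to keep $a_m \in (0,1)$ and hence $A_m \in \calS^2_+$), compute $c_1^{11}(A_m)$ via the formulas in Step 2 of the proof of Theorem \ref{Thm: Complete char}, namely $c_1^{11}(A_m) = -2\bar{a}_m(1+\bar{b}_m)\int_Y(\partial_1 w_{A_m})(\partial_{22}^2 w_{B_m})$, and observe that to first order in $\tfrac1m$ this behaves like a fixed nonzero bilinear expression in $\psi$ provided $\psi$ is chosen generically (it need not vanish identically since $A$ itself need not be, e.g., constant in $y_2$). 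The point is that the map $a \mapsto (c_j^{kl})$ is real-analytic in the coefficient and not identically zero on the trace-one slice near $A$ — if it were, every nearby trace-one map would be type-$\eps^2$, contradicting Remark \ref{Rk: Cor of Thm 1.1 not true in n>2}(i) via a dimensional reduction, or more simply contradicting the denseness statement already proved — so a suitable direction $\psi$ exists, and rescaling gives the sequence.

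The main obstacle I anticipate is ensuring the perturbation direction actually moves $c_1^{11}$ (or some $c_j^{kl}$) off zero while staying diagonal with constant trace: the explicit examples of type-$\eps$ maps constructed elsewhere in the paper (e.g. \eqref{cbad from ST}) are diagonal but do not have constant trace, so one cannot simply quote them, and the vanishing of $w_{A_m}$ when $\tr(A_m)$ is constant removes the most convenient lever. The resolution is that the relevant obstruction integral is $\int_Y(\partial_1 w_{B_m})\,(\text{something})$ where $w_{B_m}$ now genuinely depends on $a_m$ through the Laplace-type equation $-\Delta w_{B_m} = r_{B_m} b_m - \overline{b_m}$; I would compute its linearization in the direction $\psi$, show the resulting quadratic form in $\psi$ (a sum of two integrals of products of second derivatives of Newtonian-type potentials) is not identically zero, and fix one $\psi$ making it nonzero, then set $s_m = 1/m$. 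Closing the argument is then routine: $A_m = \mathrm{diag}(a + \tfrac1m\psi, 1 - a - \tfrac1m\psi)$ has trace $\equiv 1$, converges to $A$ in $C^{0,\alpha}$, lies in $\calS^2_+$ for $m$ large, and has $c_1^{11}(A_m) \neq 0$ for $m$ large by continuity of $c_1^{11}$ (proved in Theorem \ref{Thm: density}) together with the nonvanishing of the linear term, hence is type-$\eps$ by Lemma \ref{lem: trace 1 in 2D}.
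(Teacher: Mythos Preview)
Your approach has a fatal gap: you are trying to find type-$\eps$ maps inside the slice of \emph{diagonal} trace-one maps, but Corollary~\ref{Cor: Cor1} says precisely that every such map is type-$\eps^2$. Indeed, your own perturbation $A_m = \mathrm{diag}(a+\tfrac{1}{m}\psi,\,1-a-\tfrac{1}{m}\psi)$ is again of the form $C + a_m M$ with $C=\mathrm{diag}(0,1)$, $M=\mathrm{diag}(1,-1)$, so Theorem~\ref{Thm: Main} gives $c_j^{kl}(A_m)=0$ for \emph{all} $j,k,l$ and \emph{all} $m$. In the language of Theorem~\ref{Thm: Complete char}, the scalar prefactor of $A_m$ is the constant $\tfrac12$, hence $w_{A_m}\equiv 0$ and the obstruction integrals vanish identically --- you noticed this yourself and then pressed on regardless. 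The ``linearization in $\psi$'' you propose to compute is identically zero; the ``contradiction with denseness'' you invoke does not apply because Theorem~\ref{Thm: density} concerns all of $C^{0,\alpha}(\T^2;\calS^2_+)$, not the codimension-infinity diagonal trace-one slice; and Remark~\ref{Rk: Cor of Thm 1.1 not true in n>2}(i) is a statement in $n\geq 3$ that has no bearing here.

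The paper's proof resolves this by leaving the diagonal slice: starting from a nearby diagonal (non-constant-trace) $B$ with $c_1^{11}(B)\neq 0$ produced by the density argument, it adds a small \emph{constant off-diagonal} entry $\delta$ to obtain $\tilde{B}$ with $c_1^{11}(\tilde{B})\neq 0$, and sets $A_m:=\tilde{B}/\tr(\tilde{B})$. The constant $\delta$ is not cosmetic: because the off-diagonal entry of $A_m$ equals $\delta/\tr(\tilde{B})$, one can write $\tr(\tilde{B}) = (C:A_m)^{-1}$ for a \emph{constant} matrix $C$, so $\tilde{B} = \tfrac{1}{C:A_m}A_m$. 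If $A_m$ were type-$\eps^2$ then, by Lemma~\ref{lem: trace 1 in 2D}, all $c_j^{kl}(A_m)$ vanish, and Theorem~\ref{Thm: 1/C:A A}(i) forces all $c_j^{kl}(\tilde{B})$ to vanish --- contradicting $c_1^{11}(\tilde{B})\neq 0$. The off-diagonal perturbation is exactly what lets you escape Corollary~\ref{Cor: Cor1} while keeping the trace constant.
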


\begin{proof}
Let $A\in C^{0,\alpha}(\T^2;\calS_+^2)$ for some $\alpha \in (0,1)$, and suppose that $A$ is diagonal and satisfies $\tr(A) \equiv 1$. For each $m\in \N$, we will show that there exists a type-$\eps$ diffusion matrix $A_m\in C^{0,\alpha}(\T^2;\calS_+^2)$ such that $\tr(A_m) \equiv 1$ and $\|A_m-A\|_{C^{0,\alpha}(\R^2)} \leq \frac{1}{m}$.

First, we observe that by the hypothesis and Corollary \ref{Cor: Cor1}, $A$ is a type-$\eps^2$ diffusion matrix.
We deduce from the proof of Theorem \ref{Thm: density} that there exists a diagonal type-$\eps$ diffusion matrix $B = \mathrm{diag}(b_1,b_2) \in C^{0,\alpha}(\T^2;\calS_+^2)$ such that $c_1^{11}(B)\neq 0$ and 
\begin{align*}
\|B-A\|_{C^{0,\alpha}(\R^2)} \leq \frac{1}{4m}.
\end{align*}
As the map $c_1^{11}:C^{0,\alpha}(\T^2;\calS_+^2)\rightarrow \R$ is continuous, there exists a constant $\delta \in (0,\frac{1}{8m})$ sufficiently small such that for $\tilde{B}:\R^2\rightarrow \R^{2\times 2}_{\mathrm{sym}}$ given by
\begin{align*}
\tilde{B}(y) := \begin{pmatrix}
b_{1}(y) & \delta\\
\delta & b_{2}(y)
\end{pmatrix}\quad\text{for}\quad y\in \R^2,
\end{align*}
we have that $\tilde{B}\in C^{0,\alpha}(\T^2;\calS_+^2)$ and $c_1^{11}(\tilde{B}) \neq 0$. We define the map
\begin{align*}
A_m:=\frac{1}{\tr(\tilde B)} \tilde B \in C^{0,\alpha}(\T^2;\calS_+^2)
\end{align*}
and observe that $\tr(A_m) \equiv 1$ and $\|A_m-A\|_{C^{0,\alpha}(\R^2)} \leq \frac{1}{m}$.
We claim that $A_m$ is type-$\eps$.
Suppose that this were not true, i.e., $A_m$ is type-$\eps^2$. Then, by Lemma \ref{lem: trace 1 in 2D}, we have that $c_j^{kl}(A_m)=0$ for all $j,k,l\in \{1,2\}$. Noting that we can write 
\begin{align*}
\tilde B =(b_1+b_2)A_m =  \frac{1}{C:A_m} A_m,\quad\text{where}\quad C:=\begin{pmatrix}
0 & \frac{1}{2\delta}\\
\frac{1}{2\delta} & 0
\end{pmatrix},
\end{align*}
we deduce from Theorem \ref{Thm: 1/C:A A} that $\tilde{B}$ is type-$\eps^2$; a contradiction to $c_1^{11}(\tilde{B})\neq 0$. The proof is complete.
\end{proof}

Let us provide an explicit example of a type-$\eps$ diffusion matrix with constant trace in dimension $n=2$:

\begin{remark}[type-$\eps$ constant-trace map in $n=2$]\label{Rk: type-eps const tr in n=2}
The map $A\in C^{\infty}(\T^2;\calS^2_+)$ defined by
\begin{align*}
A(y_1,y_2):=\begin{pmatrix}
5+ \sin(2\pi y_1) & 1+\cos(2\pi y_1)\\
1+\cos(2\pi y_1) & 5-\sin(2\pi y_1)
\end{pmatrix}\quad\text{for}\quad (y_1,y_2)\in \R^2
\end{align*}
is a type-$\eps$ diffusion matrix. Indeed, one can show that (we omit the details)
\begin{align*}
c_2^{12}(A) = \int_0^1 \left(\frac{1}{2\pi}-\frac{\sqrt{6}}{\pi}\frac{1+ \cos(2\pi t)}{5+\sin(2\pi t)}\right)\ln(5+\sin(2\pi t))\,\mathrm{d}t = 0.003\dots\neq 0,
\end{align*}
and hence, by Lemma \ref{lem: trace 1 in 2D}, we have that $A$ is type-$\eps$. 
\end{remark}

\subsection{Scalar multiples and sums of type-$\eps^2$ maps}\label{Subsec: 3.3}

It is natural to ask whether scalar multiplication and addition are type-$\eps^2$-preserving operations. Regarding scalar multiplication, we have a positive answer.

\begin{lemma}[Scalar multiples of a type-$\eps^2$ map are type-$\eps^2$]\label{Thm: scalar multiples}
Let $A\in C^{0,\alpha}(\T^n;\calS_+^n)$ for some $\alpha \in (0,1)$, and let $\lambda >0$. Then, we have that
\begin{align}\label{cjkl(lA)=lcjkl(A)}
c_j^{kl}(\lambda A) = \lambda\, c_j^{kl}(A)\qquad \forall j,k,l\in \{1,\dots,n\}.
\end{align}
In particular, $\lambda A$ is type-$\eps^2$ if and only if $A$ is type-$\eps^2$. 
\end{lemma}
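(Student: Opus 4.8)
The plan is to show that rescaling $A$ by a positive constant $\lambda$ rescales the invariant measure, the effective coefficient, and the correctors in a controlled way, and then to trace these rescalings through the definition \eqref{eq:def-cjkl} of $c_j^{kl}$. First I would observe that if $r$ is the invariant measure of $A$, then $r$ is also the invariant measure of $\lambda A$: indeed $-D^2:(r\lambda A) = -\lambda\, D^2:(rA) = 0$, $r$ is $Y$-periodic, $r>0$, and $\int_Y r = 1$, so by uniqueness of solutions to \eqref{r problem} the invariant measure of $\lambda A$ equals $r$. Consequently the effective coefficient of $\lambda A$ is $\int_Y r(\lambda A) = \lambda \bar A$, so its $(k,l)$-th entry is $\lambda \bar a_{kl}$.

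Next I would identify the correctors. Let $v^{kl}$ solve the $(k,l)$-th cell problem \eqref{vij problem} for $A$. Then $w := \tfrac{1}{\lambda} v^{kl}$ satisfies $-\lambda A : D^2 w = -A:D^2 v^{kl} = a_{kl} - \bar a_{kl} = \lambda^{-1}\big((\lambda a_{kl}) - (\lambda \bar a_{kl})\big)\cdot\lambda$; more precisely, since the $(k,l)$-th entry of $\lambda A$ is $\lambda a_{kl}$ and its effective entry is $\lambda \bar a_{kl}$, we have $-\lambda A:D^2 w = (\lambda a_{kl}) - (\lambda \bar a_{kl})$, $w$ is $Y$-periodic, and $\int_Y w = 0$. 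By uniqueness, the $(k,l)$-th corrector of $\lambda A$ is $\tfrac{1}{\lambda} v^{kl}$. Plugging into \eqref{eq:def-cjkl}, the value $c_j^{kl}(\lambda A)$ equals $\int_Y r(\lambda A) e_j \cdot \nabla\big(\tfrac{1}{\lambda} v^{kl}\big) = \lambda \cdot \tfrac{1}{\lambda}\int_Y rA e_j\cdot \nabla v^{kl} = \int_Y rAe_j\cdot \nabla v^{kl}$. Wait — this gives $c_j^{kl}(\lambda A) = c_j^{kl}(A)$, not $\lambda\, c_j^{kl}(A)$, so I must reexamine: the definition \eqref{eq:def-cjkl} uses $rAe_j$, and here the coefficient in the integrand is $r(\lambda A)e_j = \lambda\, rAe_j$ while the corrector contributes the factor $\tfrac1\lambda$; these cancel. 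To recover the stated scaling $c_j^{kl}(\lambda A) = \lambda\, c_j^{kl}(A)$ one should instead note that the cell problem for $\lambda A$ has right-hand side $a_{kl}^{(\lambda)} - \bar a_{kl}^{(\lambda)}$ where $a_{kl}^{(\lambda)} = \lambda a_{kl}$, so its corrector $\tilde v^{kl}$ solves $-\lambda A : D^2 \tilde v^{kl} = \lambda(a_{kl} - \bar a_{kl})$, i.e. $-A:D^2\tilde v^{kl} = a_{kl}-\bar a_{kl}$, giving $\tilde v^{kl} = v^{kl}$; then $c_j^{kl}(\lambda A) = \int_Y r(\lambda A)e_j\cdot\nabla v^{kl} = \lambda\, c_j^{kl}(A)$. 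This is the correct bookkeeping, and I would present it this way.

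Finally, once \eqref{cjkl(lA)=lcjkl(A)} is established, the ``in particular'' claim is immediate: since $\lambda > 0$, the symmetrized tensor satisfies $C_{jkl}(\lambda A) = \lambda\, C_{jkl}(A)$, so $C_{jkl}(\lambda A) = 0$ for all $j,k,l$ if and only if $C_{jkl}(A) = 0$ for all $j,k,l$; by Definition \ref{Def: cg cb} this says $\lambda A$ is type-$\eps^2$ if and only if $A$ is. The main subtlety — the only place one can slip — is keeping straight which quantities carry the factor $\lambda$: the entries $a_{kl}$ of $A$ scale, the invariant measure $r$ does not, the correctors $v^{kl}$ do not (because the factor $\lambda$ on the operator is exactly absorbed by the factor $\lambda$ on the right-hand side), and so the single surviving factor of $\lambda$ in \eqref{eq:def-cjkl} comes from $r(\lambda A)e_j$. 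I expect this careful accounting to be the only real content of the argument; everything else is uniqueness of solutions to the linear periodic problems \eqref{r problem} and \eqref{vij problem}.
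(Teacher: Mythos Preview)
Your final argument is correct and is essentially the same as the paper's: the invariant measure of $\lambda A$ is $r$, the correctors of $\lambda A$ coincide with those of $A$ (since the factor $\lambda$ on the operator cancels the factor $\lambda$ on the right-hand side of the cell problem), and the single surviving $\lambda$ in \eqref{eq:def-cjkl} comes from $r(\lambda A)e_j$. The paper packages this by invoking Lemma~\ref{Lmm: aB} with the constant function $a\equiv\lambda$ and $B=A$, noting that the auxiliary function $w$ there vanishes; your direct bookkeeping is exactly what that lemma's proof amounts to in this degenerate case.

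One comment on presentation: the false start (claiming the corrector of $\lambda A$ is $\tfrac1\lambda v^{kl}$, then backtracking) should be excised. The error there is the line ``we have $-\lambda A:D^2 w = (\lambda a_{kl})-(\lambda\bar a_{kl})$,'' which is off by a factor of $\lambda$; you correctly catch this, but in a written proof simply give the correct version from the outset.
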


\begin{proof}
Let $A\in C^{0,\alpha}(\T^n;\calS_+^n)$ for some $\alpha \in (0,1)$, and let $\lambda >0$. We denote the invariant measure of $A$ by $r\in C^{0,\alpha}(\T^n;(0,\infty))$ and the effective coefficient to $A$ by $\bar{A}:=\int_Y rA \in \calS^n_+$. By Lemma \ref{Lmm: aB}, and noting that $\bar{\lambda}:=\int_Y \lambda r = \lambda$, we have that
\begin{align*}
c_j^{kl}(\lambda A) = \lambda\, c_j^{kl}(A)+ \lambda\,\bar{a}_{kl} \int_Y r A e_j\cdot \nabla w\qquad \forall j,k,l\in \{1,\dots,n\},
\end{align*}
where $w$ is the unique function in $C^{2,\alpha}(\T^n)$ satisfying $-\lambda A:D^2 w \equiv \lambda-\bar{\lambda} = 0$ and $\int_Y w = 0$. Noting that $w\equiv 0$, we find that \eqref{cjkl(lA)=lcjkl(A)} holds. 
\end{proof}

Regarding addition, we have a negative answer. Even a simple addition with the identity matrix can turn a type-$\eps^2$ map into a type-$\eps$ one.

\begin{remark}[Existence of type-$\eps^2$ map $A$ for which $A+I_n$ is type-$\eps$]\label{Rk: A+I}
There exists $A\in C^{\infty}(\T^2;\calS^2_+)$ such that $A$ is type-$\eps^2$ and $\tilde{A}\in C^{\infty}(\T^2;\calS^2_+)$ given by $\tilde{A}(y) = A(y) +I_2$ for $y\in \R^2$ is type-$\eps$; see Section \ref{Subsec: A+I}. 
\end{remark}

\subsection{The case of constant invariant measure}\label{Subsec: 3.4}

In this subsection, we consider diffusion matrices $A\in C^{0,\alpha}(\T^n;\calS^n_+)$ with constant invariant measure, i.e., $r\equiv 1$. The case $r\equiv 1$ is of particular interest as, in this case, the nondivergence-form operator $L^{\eps}:= -A(\frac{\cdot}{\eps}):D^2$ can be rewritten as a divergence-form operator $L^{\eps} = -\nabla\cdot B(\frac{\cdot}{\eps})\nabla $ for some uniformly elliptic $B\in C^{0,\alpha}(\T^n;\R^{n\times n})$ (which, in general, is not symmetric); see \cite{AL89}. 

As it turns out, $r\equiv 1$ is not a sufficient condition to guarantee that $A$ is type-$\eps^2$, not even in dimension $n=2$ when additionally assuming that $A$ is diagonal. 

\begin{remark}[type-$\eps$ diagonal map with constant invariant measure]\label{Rk: r=1 c-bad}
There exists a map $A\in C^{\infty}(\T^2;\calS^2_+)$ of the form $A = \mathrm{diag}(a_1,a_2)$ for some $a_1,a_2\in C^{\infty}(\T^2;(0,\infty))$ such that the invariant measure of $A$ is $r\equiv 1$ and $A$ is type-$\eps$; see Section \ref{Subsec: c-bad diagonal r=1 n=2}.
\end{remark}

Note that, given a map $A\in C^{0,\alpha}(\T^n;\calS^n_+)$ with invariant measure $r$, we have that the map $rA$ has constant invariant measure. One might expect that the product of a type-$\eps^2$ map and its invariant measure is again a type-$\eps^2$ map. However, this is not the case, not even in dimension $n=2$.

\begin{remark}[Existence of a type-$\eps^2$ map $A$ for which $rA$ is type-$\eps$]\label{Rk: Mult by r}
There exists a type-$\eps^2$ map $A\in C^{\infty}(\T^2;\calS^2_+)$ for which $rA\in C^{\infty}(\T^2;\calS^2_+)$ is type-$\eps$, where $r$ denotes the invariant measure of $A$; see Section \ref{Subsec: c-bad diagonal r=1 n=2}.
\end{remark} 

\subsection{Some sufficient conditions for $A$ to be type-$\eps^2$}\label{Subsec: 3.5}

Some structural assumptions on $A\in C^{0,\alpha}(\mathbb{T}^n;\calS^n_+)$ are quickly seen to guarantee that $A$ is type-$\eps^2$. 
\begin{remark}[Sufficient conditions for $A$ to be type-$\eps^2$]\label{Rk: Suff cond for c-good}
Let $A\in C^{0,\alpha}(\mathbb{T}^n;\calS^n_+)$ for some $\alpha \in (0,1)$. Then, the following assertions hold.
\begin{itemize}
\item[(i)] If $A$ is of the form
\begin{align*}
A(y) = \mathrm{diag}(a_1(y_1),a_2(y_2),\dots,a_n(y_n))\quad\text{for}\quad y=(y_1,\dots,y_n)\in \R^n
\end{align*}
for some $a_1,\dots,a_n\in C^{0,\alpha}(\T;(0,\infty))$, then $A$ is type-$\eps^2$. This follows from the fact that $\mathrm{div}(rA) = 0$ (weakly) as the invariant measure $r$ is given by
\begin{align*}
r(y) = \prod_{i=1}^n \left(\int_0^1 \frac{\mathrm{d}t}{a_i(t)}\right)^{-1} \frac{1}{a_i(y_i)}\quad\text{for}\quad y=(y_1,\dots,y_n)\in \R^n.
\end{align*}
\item[(ii)] If $A$ is of the form
\begin{align*}
A(y) = \mathrm{diag}(a_1(y),\dots,a_n(y))\quad\text{for}\quad y\in \R^n
\end{align*}
for some $a_1,\dots,a_n\in C^{0,\alpha}(\mathbb{T}^n;(0,\infty))$ such that, for any $i\in \{1,\dots,n\}$, the function $a_i$ is independent of $y_i$, then $A$ is type-$\eps^2$. This follows from the fact that $\mathrm{div}(rA) = 0$ (weakly) as the invariant measure $r$ is given by $r\equiv 1$.
\item[(iii)] If $A$ is of the form
\begin{align*}
A(y) = \tilde{A}(y_1)\quad\text{for}\quad y=(y_1,\dots,y_n)\in \R^n
\end{align*}
for some $\tilde{A}\in C^{0,\alpha}(\mathbb{T};\calS_+^n)$ with $\tilde{a}_{1j} \equiv 0$ for all $2\leq j\leq n$, then $A$ is type-$\eps^2$. This follows from the fact that $\mathrm{div}(rA) = 0$ (weakly) as the invariant measure $r$ is given by
\begin{align*}
r(y) = \left(\int_0^1 \frac{\mathrm{d}t}{a_{11}(t)}\right)^{-1}\frac{1}{\tilde{a}_{11}(y_1)}\quad\text{for}\quad y=(y_1,\dots,y_n)\in \R^n.
\end{align*}
\item[(iv)] If $A$ is a shifted even function, i.e., if for some $x\in \R^n$ there holds $A(x-y) = A(x+y)$ for all $y\in \R^n$, then $A$ is type-$\eps^2$.
\end{itemize}
\end{remark}
The results presented in Remark \ref{Rk: Suff cond for c-good} are generalizations of \cite[Corollary 1.6]{GTY20} and \cite[Theorem 1.5(c)]{GTY20}.
\begin{remark}
In \cite[Corollary 1.6]{GTY20}, it is claimed that any $A\in C^{2}(\mathbb{T}^n;\calS^n_+)$ with $A(y) = \tilde{A}(y_1)$ for some $\tilde{A}\in C^{2}(\T;\calS_+^n)$ is type-$\eps^2$. This is not true in general: the diffusion matrix from Remark \ref{Rk: type-eps const tr in n=2} provides a counterexample. The condition $\tilde{a}_{1j} \equiv 0$ for all $2\leq j\leq n$ needs to be added.
\end{remark}

\section{Collection of Counterexamples}\label{Sec: 4}

In this section, we collect various counterexamples to demonstrate the results of Remark \ref{Rk: Cor of Thm 1.1 not true in n>2} (see Sections \ref{Subsec: c-bad diagonal map with constant trace in $n=3$}--\ref{Subsec: Example with optimal rate Oeps}) and Remarks \ref{Rk: A+I}--\ref{Rk: Mult by r} (see Sections \ref{Subsec: A+I}--\ref{Subsec: c-bad diagonal r=1 n=2}).

\subsection{type-$\eps$ diagonal map with constant trace in $n=3$}\label{Subsec: c-bad diagonal map with constant trace in $n=3$}

In this section, we show that there exists a type-$\eps$ map $A\in C^{\infty}(\T^3;\calS^3_+)$ of the form 
\begin{align*}
A(y) = \mathrm{diag}(a_1(y),a_2(y),a_3(y))\quad\text{for}\quad y\in \R^3
\end{align*}
for some functions $a_1,a_2,a_3\in C^{\infty}(\T^3;(0,\infty))$ satisfying that $a_1+a_2+a_3$ is constant.

\begin{theorem}[Construction of type-$\eps$ maps with constant trace in $n = 3$]\label{Thm: construct c-bad trace}
Let $b_1,b_2\in C^{0,\alpha}(\T^2;(0,\infty))$ with $\alpha \in (0,1)$ be chosen such that the map
\begin{align*}
B:=\mathrm{diag}(b_1,b_2)\in C^{0,\alpha}(\T^2;\calS^2_+)
\end{align*}
satisfies $c_1^{11}(B) \neq 0$. Further, let $c > \|b_1+b_2\|_{\infty}$ be a chosen constant and set $b_3 := c - b_1 - b_2\in C^{0,\alpha}(\T^2;(0,\infty))$. Then, the map 
\begin{align*}
A := \mathrm{diag}(a_1,a_2,a_3)\in C^{0,\alpha}(\T^3;\calS^3_+)
\end{align*}
with $a_1,a_2,a_3\in C^{0,\alpha}(\T^3;(0,\infty))$ defined by
\begin{align*}
a_i(y):=b_i(y_1,y_2)\quad\text{for}\quad y=(y_1,y_2,y_3)\in \R^3
\end{align*}
for $i\in \{1,2,3\}$ is type-$\eps$.
\end{theorem}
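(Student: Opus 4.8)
The plan is to reduce the $3$-dimensional problem to the known $2$-dimensional computation by exploiting the product structure of $A$. Since each $a_i$ depends only on $(y_1,y_2)$ and not on $y_3$, the natural guess is that the invariant measure $r$ of $A$ on $\T^3$ coincides with the invariant measure $r_B$ of $B=\mathrm{diag}(b_1,b_2,b_3)$ on $\T^2$ (viewed as a function of $(y_1,y_2)$ only, constant in $y_3$), and that the correctors $v^{kl}$ of $A$ are likewise the correctors of $B$, independent of $y_3$. First I would verify the claim $r(y)=r_B(y_1,y_2)$: plug this ansatz into the weak formulation $\int_Y r(-A:D^2\varphi)=0$ for $\varphi\in C^\infty(\T^3)$, split $D^2\varphi$ into the $(y_1,y_2)$-block plus the $\partial_{33}$, $\partial_{13}$, $\partial_{23}$ terms; the cross terms and the $\partial_{33}$ term integrate to zero after integrating in $y_3$ (because $r$ and $A$ are $y_3$-independent and $\int_0^1 \partial_3(\cdots)\,dy_3=0$, resp. $\int_0^1\partial_{33}\varphi\,dy_3=0$), leaving exactly the $2$D identity for $r_B$. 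Uniqueness of the invariant measure then gives $r=r_B$. The same argument, applied to the cell problem $-A:D^2 v^{kl}=a_{kl}-\bar a_{kl}$, shows $v^{kl}(y)=v^{kl}_B(y_1,y_2)$ for $k,l\in\{1,2,3\}$; note the right-hand sides only involve $a_{kl}$, which are $y_3$-independent, and $\bar a_{kl}=\int_Y r_B b_{kl}$ matches the $2$D effective coefficient.

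Next I would compute $c_j^{kl}(A)=\int_Y rAe_j\cdot\nabla v^{kl}$ using these identifications. Because $v^{kl}$ is independent of $y_3$, the $e_3$-component contributes nothing, and since $A$ is diagonal, $Ae_j\cdot\nabla v^{kl}=a_j\,\partial_j v^{kl}$. For $j\in\{1,2\}$ the integrand is $y_3$-independent and integrating out $y_3$ reduces the integral over $Y=[0,1]^3$ to the integral over $[0,1]^2$, giving $c_j^{kl}(A)=c_j^{kl}(B)$ (the $2$D third-order tensor of the diagonal map $B$ on $\T^2$). For $j=3$ we get $c_3^{kl}(A)=\int_Y r\,a_3\,\partial_3 v^{kl}=0$ since $\partial_3 v^{kl}\equiv 0$. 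In particular $c_1^{11}(A)=c_1^{11}(B)\neq 0$ by hypothesis, so $C_{111}(A)=3\,c_1^{11}(A)\neq 0$, which means $A$ fails the type-$\eps^2$ condition of Definition~\ref{Def: cg cb} and is therefore type-$\eps$.

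I should also check the hypotheses are consistent: $B=\mathrm{diag}(b_1,b_2)$ with $c_1^{11}(B)\neq 0$ exists by Lemma~\ref{lem: trace 1 in 2D} together with the denseness construction in the proof of Theorem~\ref{Thm: density} (or more concretely via Theorem~\ref{Thm: Complete char}/Lemma~\ref{Thm: explicit perturbation}), and choosing $c>\|b_1+b_2\|_\infty$ guarantees $b_3=c-b_1-b_2>0$ so that $A\in C^{0,\alpha}(\T^3;\calS^3_+)$ with constant trace $a_1+a_2+a_3\equiv c$. The regularity claims ($r\in C^{0,\alpha}$, $v^{kl}\in C^{2,\alpha}$) are inherited from the $2$D objects and Schauder theory, since viewing a $C^{0,\alpha}(\T^2)$ or $C^{2,\alpha}(\T^2)$ function as a $y_3$-independent function on $\T^3$ preserves the Hölder norm.

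The main obstacle is really just the bookkeeping in the reduction step: one must be careful that the uniqueness statements for \eqref{r problem} and \eqref{vij problem} are being applied on $\T^3$ (so that identifying the $y_3$-independent candidate with the true solution is legitimate), and that all the $y_3$-derivative terms genuinely vanish after integration — in particular the mixed second derivatives $\partial_{13}\varphi$, $\partial_{23}\varphi$ contribute $\int_0^1 \partial_3(\,\cdot\,)\,dy_3=0$ only because the coefficients $r$ and $a_i$ carry no $y_3$-dependence, which is where the hypothesis is used. Once that is set up cleanly, the conclusion $c_1^{11}(A)=c_1^{11}(B)\neq 0$ is immediate and the theorem follows.
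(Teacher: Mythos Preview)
Your approach is essentially identical to the paper's: identify the invariant measure and the $(1,1)$-corrector of $A$ with the $y_3$-independent lifts of $r_B$ and $v^{11}_B$, then read off $c_1^{11}(A)=c_1^{11}(B)\neq 0$. Two small points of bookkeeping to clean up: (i) you write $B=\mathrm{diag}(b_1,b_2,b_3)$, but $B$ in the statement is the $2\times 2$ map $\mathrm{diag}(b_1,b_2)$ on $\T^2$, and that is the object whose invariant measure and correctors you are lifting; (ii) correspondingly, the assertion $v^{kl}(y)=v^{kl}_B(y_1,y_2)$ only makes literal sense for $k,l\in\{1,2\}$ --- the paper in fact only verifies and uses the case $(k,l)=(1,1)$, which is all that is needed. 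Also, since $A$ is diagonal there are no $\partial_{13},\partial_{23}$ terms in $A{:}D^2\varphi$; only the $\partial_{33}$ term needs to be integrated out in $y_3$.
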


\begin{proof}
Let $r_B\in C^{0,\alpha}(\T^2;(0,\infty))$ denote the invariant measure of $B$ and let $v^{11}_B\in C^{2,\alpha}(\T^2)$ denote the solution to the (1,1)-th cell problem corresponding to $B$. First, we claim that the function $r_A\in C^{0,\alpha}(\T^3;(0,\infty))$ defined by
\begin{align*}
r_A(y) := r_B(y_1,y_2)\quad\text{for}\quad y=(y_1,y_2,y_3)\in \R^3
\end{align*}
is the invariant measure of $A$. Indeed, note that $\int_{[0,1]^3} r_A = \int_{[0,1]^2} r_B = 1$, and for any $\fhi\in C^{\infty}(\T^3)$ there holds 
\begin{align*}
\int_{[0,1]^3} r_A (-A:D^2 \fhi) = \int_{[0,1]^2} r_B (-B:D^2 \phi) = 0,
\end{align*}
where $\phi\in C^{\infty}(\T^2)$ denotes the function defined as $\phi(y_1,y_2):= \int_0^1 \fhi(y_1,y_2,t)\,\mathrm{d}t$ for $(y_1,y_2)\in \R^2$. Next, we claim that the function $v^{11}_A\in C^{2,\alpha}(\T^3)$ defined by
\begin{align*}
v^{11}_A(y):=v^{11}_B(y_1,y_2)\quad\text{for}\quad y=(y_1,y_2,y_3)\in \R^3
\end{align*}
is the solution to the (1,1)-th cell problem corresponding to $A$. Indeed, note that $\int_{[0,1]^3} v^{11}_A = \int_{[0,1]^2} v^{11}_B = 0$ and for any $y=(y_1,y_2,y_3)\in \R^3$ we have that 
\begin{align}\label{-A:D^2v_a^11}
[-A:D^2 v^{11}_A](y) = [-B:D^2 v^{11}_B](y_1,y_2) = b_1(y_1,y_2) - \bar{b}_1 = a_1(y) - \bar{a}_1,
\end{align}
where we have written $\bar{b}_1:=\int_{[0,1]^2} r_B b_1$ and $\bar{a}_1:=\int_{[0,1]^3} r_A a_1$. Therefore, there holds
\begin{align*}
c_1^{11}(A) = \int_{[0,1]^3} r_A\, a_1\, \partial_1 v^{11}_A = \int_{[0,1]^2} r_B\, b_1\, \partial_1 v^{11}_B = c_1^{11}(B) \neq 0,
\end{align*}
and hence, $A$ is type-$\eps$.
\end{proof}

\begin{remark}[type-$\eps$ diagonal constant-trace map in $n=3$]\label{Rk: explicit cbad trace n=3}
We can use Theorem \ref{Thm: construct c-bad trace} to construct a diagonal type-$\eps$ map with constant trace in dimension $n=3$. We choose $b_1,b_2\in C^{\infty}(\T^2;(0,\infty))$ defined by
\begin{align}\label{explicit c-bad n=3 b1}
\begin{split}
b_1(y_1,y_2) &:= \frac{1-\frac{1}{2}\sin(2\pi y_1)\sin(2\pi y_2)}{1+\frac{1}{4}(\cos(2\pi y_1)-2\sin(2\pi y_1))\sin(2\pi y_2)},\\
b_2(y_1,y_2) &:= \frac{1+\frac{1}{2}\sin(2\pi y_1)\sin(2\pi y_2)}{1+\frac{1}{4}(\cos(2\pi y_1)-2\sin(2\pi y_1))\sin(2\pi y_2)}
\end{split}
\end{align}
for $(y_1,y_2)\in\R^2$. Then, $B:=\mathrm{diag}(b_1,b_2)\in C^{\infty}(\T^2;\calS^2_+)$ is type-$\eps$ and we have $c_1^{11}(B) = -\frac{1}{128\pi}\neq 0$; see Remark \ref{Rk: Example recover ST}. Noting that $\|b_1+b_2\|_{\infty} \leq 8$, we define $A:=\mathrm{diag}(a_1,a_2,a_3)\in C^{\infty}(\T^3;\calS^3_+)$ with
\begin{align}\label{explicit c-bad n=3 a1}
a_1(y):= b_1(y_1,y_2),\;\;
a_2(y):= b_2(y_1,y_2),\;\;
a_3(y):= 10 - b_1(y_1,y_2) - b_2(y_1,y_2)
\end{align}
for $y=(y_1,y_2,y_3)\in\R^3$. Then, by Theorem \ref{Thm: construct c-bad trace}, we have that $A$ is type-$\eps$.
\end{remark}

Similarly to the proof of Theorem \ref{Thm: construct c-bad trace}, we find the following result.

\begin{remark}[type-$\eps$ diagonal constant-trace map in $n\geq 4$]\label{Rk: Exp c-bad n>=4}
Let $n\geq 4$ and let $b_1,b_2\in C^{\infty}(\T^2;(0,\infty))$ denote the functions defined in \eqref{explicit c-bad n=3 b1}. Then, the map $A:=\mathrm{diag}(a_1,\dots,a_n)\in C^{\infty}(\T^n;\calS^n_+)$ with $a_1,\dots,a_n\in C^{\infty}(\T^n;(0,\infty))$ defined by
\begin{align*}
a_1(y):=b_1(y_1,y_2),\quad a_2(y):= b_2(y_1,y_2),\quad a_3(y):=10-b_1(y_1,y_2)-b_2(y_1,y_2)
\end{align*}
for $y=(y_1,\dots,y_n)\in\R^n$ and $a_i\equiv 1$ for $4\leq i\leq n$ is type-$\eps$. 
\end{remark}

\subsection{An example with optimal $L^{\infty}$-rate $\calO(\eps)$ and $f\equiv 0$ in $n=3$}\label{Subsec: Example with optimal rate Oeps}

We consider the type-$\eps$ diagonal constant-trace map
\begin{align*}
A=\mathrm{diag}(a_1,a_2,a_3)\in C^{\infty}(\T^3;\calS^3_+)
\end{align*}
defined in Remark \ref{Rk: explicit cbad trace n=3} (see \eqref{explicit c-bad n=3 a1}). Let $\Omega\subset \R^3$ be a bounded smooth domain and let us define the function
\begin{align}\label{explicit g}
g:\bar{\Omega}\rightarrow \R,\quad g(x_1,x_2,x_3):= 8 x_1^3 -3 x_1 x_3^2.
\end{align} 
For $\eps > 0$, we consider the problem
\begin{align}\label{ueps problem f=0}
\left\{\begin{aligned}-A\left(\frac{\cdot}{\eps}\right):D^2 u^{\eps} &= 0& &\text{in }\Omega,\\
u^{\eps} &= g&  &\text{on }\partial\Omega,\end{aligned}\right.
\end{align}
and the corresponding homogenized problem
\begin{align}\label{u problem f=0}
\left\{\begin{aligned}-\bar{A}:D^2 u &= 0& &\text{in }\Omega,\\
u &= g&  &\text{on }\partial\Omega.\end{aligned}\right.
\end{align}
The goal of this section is to prove the following result:
\begin{theorem}[Example with optimal $L^{\infty}$-rate $\calO(\eps)$ and $f\equiv 0$ in $n=3$]
Let $A:=\mathrm{diag}(a_1,a_2,a_3)\in C^{\infty}(\T^3;\calS^3_+)$ with $a_1,a_2,a_3\in C^{\infty}(\T^3;(0,\infty))$ defined in Remark \ref{Rk: explicit cbad trace n=3}, let $\Omega \subset \R^3$ be a bounded smooth domain, and let $g\in C^{\infty}(\bar{\Omega})$ be the function defined in \eqref{explicit g}. Then, there exist constants $C_1,C_2,\eps_0 > 0$ such that 
\begin{align*}
C_1\eps \leq \|u^{\eps}-u\|_{L^{\infty}(\Omega)} \leq C_2\eps\qquad \forall \eps\in (0,\eps_0],
\end{align*}
where $(u^{\eps})_{\eps > 0}$ denotes the sequence of solutions to \eqref{ueps problem f=0} and $u$ denotes the solution to the homogenized problem \eqref{u problem f=0}.
\end{theorem}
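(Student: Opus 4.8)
The plan is to extract the rate directly from the expansion \eqref{Linfty bound} together with the $z$-equation \eqref{z problem}: once one shows that the solution $z$ of \eqref{z problem} for this particular pair $(A,g)$ is not identically zero, both bounds follow from \eqref{Linfty bound} by the triangle inequality --- writing $c_z:=\|z\|_{L^\infty(\Omega)}>0$ one gets $\|u^\eps-u\|_{L^\infty(\Omega)}\le 2c_z\eps+C\eps^2$ and $\|u^\eps-u\|_{L^\infty(\Omega)}\ge 2c_z\eps-C\eps^2$, and then picks $\eps_0:=c_z/C$, $C_1:=c_z$, $C_2:=2c_z+C\eps_0$. So the whole task reduces to computing $\bar A$, the homogenized solution $u$, and the third-order tensor $(c_j^{kl}(A))$, and to checking $z\not\equiv 0$.

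First I would identify $\bar A$ and $u$. Since $a_1=b_1$, $a_2=b_2$, $a_3=10-b_1-b_2$ are independent of $y_3$, the invariant measure of $A$ is $r_A(y)=r_B(y_1,y_2)$ with $r_B$ the invariant measure of $B:=\mathrm{diag}(b_1,b_2)$; this is verified exactly as in the proof of Theorem \ref{Thm: construct c-bad trace}, by testing $-D^2:(r_AA)=0$ against $\fhi\in C^\infty(\T^3)$ and integrating out $y_3$. As $b_1,b_2$ are the diagonal entries of the map \eqref{cbad from ST}, one has $\int_Y r_Bb_1=\int_Y r_Bb_2=1$, hence $\bar A=\mathrm{diag}(1,1,8)$. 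Then one checks that $g(x)=8x_1^3-3x_1x_3^2$ already satisfies $-\bar A:D^2g=-(48x_1-48x_1)\equiv0$, so by uniqueness for \eqref{u problem f=0} we have $u\equiv g$; in particular $u\in C^\infty(\bar\Omega)$, so \eqref{f,g ass} holds with $f\equiv0$ and \eqref{Linfty bound} is available.

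Next I would compute the third-order homogenized tensor. The same $y_3$-independence gives $v^{11}_A(y)=v^{11}_B(y_1,y_2)$ and $v^{22}_A(y)=v^{22}_B(y_1,y_2)$, while $\tr A\equiv10$ forces $v^{11}_A+v^{22}_A+v^{33}_A\equiv0$ (cf. Lemma \ref{lem: trace 1 in 2D}), and the off-diagonal correctors vanish since $A$ is diagonal. Substituting into \eqref{eq:def-cjkl} and integrating out $y_3$ yields $c_j^{ll}(A)=c_j^{ll}(B)$ for $l,j\in\{1,2\}$, $c_3^{ll}(A)=0$, and $c_j^{33}(A)=-c_j^{11}(A)-c_j^{22}(A)$. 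By Remark \ref{Rk: Example recover ST}, $c_1^{11}(B)=c_1^{22}(B)=-\tfrac1{128\pi}$ and all other $c_j^{kl}(B)$ vanish, so the only nonzero entries are $c_1^{11}(A)=c_1^{22}(A)=-\tfrac1{128\pi}$ and $c_1^{33}(A)=\tfrac1{64\pi}$. Using $\partial_{111}^3g=48$, $\partial_{133}^3g=-6$ (all other relevant third derivatives of $g$ vanish), we get $\sum_{j,k,l}c_j^{kl}(A)\,\partial_{jkl}^3u=48\,c_1^{11}(A)-6\,c_1^{33}(A)=-\tfrac{15}{32\pi}$, hence $z$ solves $-\bar A:D^2z\equiv\tfrac{15}{32\pi}$ in $\Omega$, $z=0$ on $\partial\Omega$. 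Since the right-hand side is a nonzero constant, $z\not\equiv0$ (in fact $z>0$ in $\Omega$ by the maximum principle), completing the argument via the first paragraph.

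The only genuinely non-mechanical step is the computation of $(c_j^{kl}(A))$, which a priori requires all correctors of a $3$D map; the combination of $y_3$-independence and constant trace is what collapses it onto the already-understood planar map $B$ of Remark \ref{Rk: Example recover ST}, so nothing new must be solved. A secondary point requiring care --- but no real analysis --- is the design of $g$: it has to be simultaneously $\bar A$-harmonic (so that $u=g$ is explicit, avoiding \eqref{u problem f=0}) and ``non-orthogonal'' to the tensor, i.e. $\sum_{j,k,l}c_j^{kl}(A)\,\partial_{jkl}^3g\not\equiv0$; the cubic $8x_1^3-3x_1x_3^2$ is tuned precisely to meet both requirements.
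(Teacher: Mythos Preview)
Your proposal is correct and follows essentially the same approach as the paper: identify $r_A$ and $\bar A$ via the $y_3$-independence and the known planar map $B$, verify $u=g$, reduce the tensor $(c_j^{kl}(A))$ to $(c_j^{kl}(B))$ using $y_3$-independence and the constant-trace relation $v^{33}_A=-v^{11}_A-v^{22}_A$, compute $\sum_{j,k,l}c_j^{kl}(A)\,\partial^3_{jkl}u=-\tfrac{15}{32\pi}$, and conclude $z\not\equiv0$ so that \eqref{Linfty bound} yields the two-sided $\calO(\eps)$ bound. The order of the steps differs slightly from the paper, but the content and all computations match.
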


\begin{proof}
First, let us recall that the map 
\begin{align*}
B:=\mathrm{diag}(b_1,b_2)\in C^{\infty}(\T^2;\calS^2_+)
\end{align*}
with $b_1,b_2\in C^{\infty}(\T^2;(0,\infty))$ defined in \eqref{explicit c-bad n=3 b1} is type-$\eps$ and we have
\begin{align}\label{cjkl of B values}
c_1^{11}(B) = c_1^{22}(B) = -\frac{1}{128\pi},\quad\quad c_j^{kl}(B) = 0\quad\forall (j,k,l)\not\in\{(1,1,1),(1,2,2)\};
\end{align}
see Remark \ref{Rk: Example recover ST}. Let us denote the invariant measure of $B$ by $r_B\in C^{\infty}(\T^2;(0,\infty))$ and the solution to the (k,l)-the cell problem corresponding to $B$ by $v^{kl}_B\in C^{\infty}(\T^2)$ for $k,l\in\{1,2\}$. 

\medskip
\noindent\textbf{Step 1:} Note that the invariant measure $r\in C^{\infty}(\T^3)$ of $A$ is given by
\begin{align*}
r(y):= r_B(y_1,y_2) = 1+\frac{1}{4}(\cos(2\pi y_1)-2\sin(2\pi y_1))\sin(2\pi y_2)
\end{align*} 
for $y = (y_1,y_2,y_3)\in \R^3$, and that the solutions $v^{11},v^{22},v^{33}\in C^{\infty}(\T^3)$ to the (k,k)-th cell problem corresponding to $A$ for $k\in\{1,2,3\}$ are given by
\begin{align*}
v^{11}(y) := v^{11}_B(y_1,y_2),\quad
v^{22}(y) := v^{22}_B(y_1,y_2),\quad
v^{33}(y) := -v^{11}_B(y_1,y_2)-v^{22}_B(y_1,y_2)
\end{align*}
for $y=(y_1,y_2,y_3)\in \R^3$. Indeed, for $v^{11}$ and $v^{22}$ this is shown as in \eqref{-A:D^2v_a^11}, and for $v^{33}$ we use that $v^{33} = -v^{11}-v^{22}$ to find that
\begin{align*}
-A:D^2 v^{33} = -\left(a_1 - \int_{[0,1]^3} r a_1\right) -\left(a_2 - \int_{[0,1]^3} r a_2\right)= a_3 - \int_{[0,1]^3} r a_3,
\end{align*}
where we have used in the last step that $a_3 = 10 - a_1 - a_2$. 

\medskip
\noindent\textbf{Step 2:} We can now compute the values $c_j^{kl}(A)$ for $j,k,l\in\{1,2,3\}$. First, we note that $c_j^{kl}(A) = 0$ whenever $k\neq l$, and that $c_3^{kk}(A) = \int_{[0,1]^3} r a_3\partial_3 v^{kk} = 0$ for any $k\in \{1,2,3\}$. It remains to find $c_j^{kk}(A)$ for $j\in\{1,2\},k\in \{1,2,3\}$. We have that
\begin{align*}
c_j^{kk}(A) = \int_{[0,1]^3} ra_j\partial_j v^{kk} = \int_{[0,1]^2} r_B\, b_j\, \partial_j v^{kk}_B = c_j^{kk}(B)\qquad\forall j,k\in\{1,2\},
\end{align*}
and, using that $v^{33} = -v^{11}-v^{22}$, we have that
\begin{align*}
c_j^{33}(A) = \int_{[0,1]^3} ra_j\partial_j v^{33} = -\int_{[0,1]^3} ra_j\partial_j v^{11} - \int_{[0,1]^3} ra_j\partial_j v^{22} = -c_j^{11}(A) - c_j^{22}(A)
\end{align*}
for any $j\in\{1,2,3\}$. In view of \eqref{cjkl of B values}, we find that
\begin{align}\label{computed cjkl(A)}
c_1^{11}(A) = c_1^{22}(A) = -\frac{1}{128\pi},\qquad c_1^{33}(A) = \frac{1}{64\pi},\qquad c_j^{kl}(A) = 0\;\text{ when }\; j\neq 1.
\end{align}
\textbf{Step 3:} We claim that the solution $u\in C^{\infty}(\bar{\Omega})$ to the homogenized problem \eqref{u problem f=0} is given by
\begin{align*}
u(x) := g(x) = 8 x_1^3 -3 x_1 x_3^2
\end{align*}
for any $x=(x_1,x_2,x_3)\in \bar{\Omega}$. Note that the effective coefficient $\bar{A}\in \calS^3_+$ is given by
\begin{align*}
\bar{A} := \int_{[0,1]^3} rA = \mathrm{diag}(1,1,8)
\end{align*}
and that $u$ satisfies $-\bar{A}:D^2 u = -\partial_{11}^2 u -\partial_{22}^2 u - 8\,\partial_{33}^2 u \equiv 0$. As also $u=g$ in $\bar{\Omega}$, we have that $u$ is the solution to \eqref{u problem f=0}.

\medskip
\noindent \textbf{Step 4:} Let $z$ denote the unique solution to \eqref{z problem}, and let us recall from \eqref{Linfty bound} that 
\begin{align}\label{recall 1.7 for St4}
\left\|u^{\eps} - u + 2\eps z\right\|_{L^{\infty}(\Omega)} = \calO(\eps^2)\quad\text{as }\eps \searrow 0.
\end{align}
We show that $z\not\equiv 0$. To this end, we use \eqref{computed cjkl(A)} to obtain that
\begin{align*}
\sum_{j,k,l=1}^3 c_j^{kl}(A)\,\partial_{jkl}^3 u = -\frac{1}{128\pi}\partial_{111}^3 u -\frac{1}{128\pi}\partial_{122}^3 u +\frac{1}{64\pi}\partial_{133}^3 u \equiv -\frac{15}{32\pi}.
\end{align*}
In view of \eqref{z problem}, it follows that $z\not\equiv 0$, and we conclude from \eqref{recall 1.7 for St4} that there exist constants $C_1,C_2,\eps_0 > 0$ such that 
\begin{align*}
C_1\eps \leq \|u^{\eps}-u\|_{L^{\infty}(\Omega)} \leq C_2\eps\qquad \forall \eps\in (0,\eps_0].
\end{align*}
The proof is complete.
\end{proof}

\subsection{type-$\eps^2$ map $A$ for which $A+I_n$ is type-$\eps$ in $n=2$}\label{Subsec: A+I}

In this section, we provide an example of a map $A\in C^{\infty}(\T^2;\calS^2_+)$ such that $A$ is type-$\eps^2$ and $A+I_2$ is type-$\eps$, thus demonstrating Remark \ref{Rk: A+I}. We define $A\in C^{\infty}(\T^2;\calS^2_+)$ by
\begin{align}\label{A for A+I}
A(y):=\frac{1}{r(y)}\, \mathrm{diag}(a(y),2-a(y))\quad\text{for}\quad y\in \R^2,
\end{align}
where $r\in C^{\infty}(\T^2;(0,\infty))$ and $a\in C^{\infty}(\T^2)$ are given by
\begin{align}\label{A for A+I ra}
\begin{split}
r(y_1,y_2)&:= 1 + \frac{1}{3} \sin(2\pi(y_1+y_2)) + \frac{1}{3} \cos(2\pi(y_1 - y_2)),\\
a(y_1,y_2)&:= 1 + \frac{1}{2}\sin(4\pi(y_1+y_2))
\end{split}
\end{align}
for $(y_1,y_2)\in \R^2$.

\subsubsection{The map $A$ defined in \eqref{A for A+I}-\eqref{A for A+I ra} is type-$\eps^2$}

Noting that $r$ is the invariant measure of $A$, we can use Lemma \ref{Thm: r,a special} to show that $A$ is type-$\eps^2$. Note that 
\begin{align*}
r(y_1,y_2) = r_1(y_1+y_2)  +  r_2(y_1 - y_2)
\end{align*}
for any $(y_1,y_2)\in \R^2$, where $r_1,r_2\in C^{\infty}(\T)$ are the functions given by
\begin{align*}
r_1(t):= 1 + \frac{1}{3}\sin(2\pi t),\quad r_2(t):=\frac{1}{3}\cos(2\pi t)\quad\text{for}\quad t\in \R.
\end{align*}
Then, the functions $R_i \in C^{\infty}(\T)$ satisfying $R_i'' = r_i - \int_{(0,1)} r_i$ and $\int_{(0,1)} R_i = 0$ for $i\in\{1,2\}$ are given by $R_1(t) := -\frac{1}{12\pi^2}\sin(2\pi t)$ and $R_2(t):=-\frac{1}{12\pi^2}\cos(2\pi t)$ for $t\in \R$. Noting that
\begin{align*}
\int_0^1 \int_0^1 a(y_1,y_2) R_1'(y_1+y_2)\,\mathrm{d}y_1\,\mathrm{d}y_2 = \int_0^1 \int_0^1 a(y_1,y_2) R_2'(y_1-y_2)\,\mathrm{d}y_1\,\mathrm{d}y_2 = 0,
\end{align*}
we deduce from Lemma \ref{Thm: r,a special} that $A$ is type-$\eps^2$.

\subsubsection{The map $A+I_2$ with $A$ defined in \eqref{A for A+I}-\eqref{A for A+I ra} is type-$\eps$} 

Numerically, we see that for $\tilde{A} \in C^{\infty}(\T^2;\calS^2_+)$ given by
\begin{align*}
\tilde{A}(y) := A(y)+I_2 \quad\text{for}\quad y\in \R^2,
\end{align*}
we have that $c_1^{11}(\tilde{A})=0.0005\dots \neq 0$ and thus, $\tilde{A}$ is type-$\eps$. The invariant measure and the effective coefficient have been approximated by the finite element scheme presented in \cite{CSS20} and the corrector by a standard $W_{\mathrm{per}}(Y)$-conforming finite element scheme.

\subsection{type-$\eps$ diagonal map $A$ with $r\equiv 1$ in $n=2$}\label{Subsec: c-bad diagonal r=1 n=2}

In this section, we provide an example of a diagonal map $A\in C^{\infty}(\T^2;\calS^2_+)$ which has invariant measure $r\equiv 1$ and is type-$\eps$. We define $A\in C^{\infty}(\T^2;\calS^2_+)$ by
\begin{align}\label{A c-bad r=1 in n=2}
A(y):=\mathrm{diag}(a_1(y),a_2(y))\quad\text{for}\quad y\in \R^2,
\end{align}
where $a_1,a_2\in C^{\infty}(\T^2;(0,\infty))$ are defined by
\begin{align}\label{A c-bad r=1 in n=2 2}
\begin{split}
a_1(y_1,y_2)&:= 1 - \frac{1}{2}\sin(2\pi(y_1+y_2)) + \frac{1}{4}\sin(4\pi y_2),\\
a_2(y_1,y_2)&:= 1 + \frac{1}{2}\sin(2\pi(y_1+y_2)) + \frac{1}{4}\cos(2\pi y_1)
\end{split}
\end{align}
for $(y_1,y_2)\in \R^2$. We have $D^2:A = \partial_{11}^2 a_1 + \partial_{22}^2 a_2\equiv 0$ and thus, the invariant measure of $A$ is given by $r\equiv 1$. The effective coefficient to $A$ is given by $\bar{A}:=\int_Y rA = I_2$. Numerically, we see that $c_1^{11}(A) = -0.00001\dots \neq 0$ and hence, $A$ is type-$\eps$. We have used a standard $W_{\mathrm{per}}(Y)$-conforming finite element scheme to approximate $v^{11}$ and omit the details. 

As a consequence, we find that there exists a type-$\eps^2$ diagonal $\tilde{A}\in C^{\infty}(\T^2;\calS^2_+)$ for which $\tilde{r}\tilde{A}$ is type-$\eps$, where $\tilde{r}$ denotes the invariant measure to $\tilde{A}$. Indeed, let us define the map $\tilde{A}\in C^{\infty}(\T^2;\calS^2_+)$ by
\begin{align*}
\tilde{A}(y):=\frac{1}{a_1(y)}A(y) = \mathrm{diag}\left(1,\frac{a_2(y)}{a_1(y)}\right)\quad\text{for}\quad y\in \R^2,
\end{align*}
with $a_1,a_2\in C^{\infty}(\T^2;(0,\infty))$ and $A\in C^{\infty}(\T^2;\calS^2_+)$ defined in \eqref{A c-bad r=1 in n=2}-\eqref{A c-bad r=1 in n=2 2}. Then, $\tilde{A}$ is type-$\eps^2$ by Theorem \ref{Thm: Main}. Noting that the invariant measure $\tilde{r}\in C^{\infty}(\T^2;(0,\infty))$ of $\tilde{A}$ is given by $\tilde{r} = a_1$, we find that $\tilde{r}\tilde{A} = A$ is type-$\eps$.

\section{Conclusion and Open Questions}\label{Sec: Conc}

Let us briefly review what has been achieved in this work. We started by showing that any diffusion matrix $A$ which can be written as $A=C+aM$ for some $C, M\in \R^{n\times n}_{\mathrm{sym}}$ and $a\in C^{0,\alpha}(\T^n)$ is type-$\eps^2$. From this, we have deduced two interesting consequences in dimension $n=2$: the fact that diagonal diffusion matrices with constant trace are type-$\eps^2$, and the fact that for any diagonal diffusion matrix the $A(\frac{\cdot}{\eps})$-harmonic functions homogenize at rate $\calO(\eps^2)$. Both of these statements fail to hold in dimensions $n\geq 3$. 

The second major result of this work was to fully characterize the set of type-$\eps^2$ diagonal diffusion matrices in dimension $n=2$. 

As a third major result, we have found a way of constructing families of type-$\eps^2$ diffusion matrices of the form $\gamma A$ from a given type-$\eps^2$ diagonal $A$, where $\gamma$ is some positive scalar function depending on $A$. 

The fourth major result of this paper was to prove that the set of type-$\eps$ diffusion matrices in $C^{0,\alpha}(\T^n;\calS^n_+)$ is open and dense in $C^{0,\alpha}(\T^n;\calS^n_+)$ when $n\geq 2$. Next to these four aforementioned major results, we have further analyzed various aspects regarding type-$\eps^2$ and type-$\eps$ diffusion matrices, which we will not recall here for the sake of brevity. 

The following list of questions provides a series of open problems for future work:

\begin{question}
For $n\in \N$ and $\alpha\in (0,1)$, we define the set $\mathcal M_n^{\alpha} \subset C^{0,\alpha}(\T^n;\calS^n_+)$ by $\mathcal M_n^{\alpha}:=\{A\in C^{0,\alpha}(\T^n;\calS^n_+):\mathrm{tr}(A) \equiv 1\}$. Suppose $n\geq 3$ and $\alpha\in (0,1)$. Is it true that the set $\{A\in \mathcal{M}_n^{\alpha}: A\text{ is type-$\eps$}\}$ is open and dense in $\mathcal{M}_n^{\alpha}$?
\end{question}

\begin{question}
Find a complete characterization of the set of type-$\eps^2$ diagonal maps in $C^{0,\alpha}(\T^3;\calS^3_+)$, similarly to Theorem \ref{Thm: Complete char} in the two-dimensional case.
\end{question}

\begin{question}
Let $A\in C^{0,\alpha}(\T^n;\calS^n_+)$ with $\alpha\in (0,1)$ be a type-$\eps^2$ diagonal diffusion matrix. By Theorem \ref{Thm: 1/C:A A}, if $C\in \R^{n\times n}$ is such that $\gamma:=\frac{1}{C:A}\in C^{0,\alpha}(\T^n;(0,\infty))$, then $\gamma A$ is type-$\eps^2$.
Are there other generic ways to construct $\theta \in C^{0,\alpha}(\T^n;(0,\infty))$ so that $\theta A$ is type-$\eps^2$?
\end{question}

\begin{question}
Do the results of Theorem \ref{Thm: Main} and Conjecture \ref{Conj} remain true in the framework of stochastic homogenization?
\end{question}

\section*{Acknowledgments}

The authors thank Professor Hongjie Dong (Brown University) for useful discussions on the regularity of invariant measures, Professor Yves Capdeboscq (Universit\'e de Paris) for helpful conversations during the preparation of this work, and Professor Scott Armstrong (New York University) for useful discussions on the third-order homogenized tensor and suggestions on the writing. The work of HT is partially supported by NSF CAREER grant DMS-1843320 and a Simons Fellowship. The work of XG is supported by Simons Foundation through Collaboration Grant for Mathematicians \#852943.

\bibliographystyle{plain}
\bibliography{ref_GST.bib}

\end{document}